\documentclass[11pt,a4paper]{amsart}
\usepackage{amsmath,amssymb,amsthm, amsfonts,enumerate,mathtools,pinlabel,float,makecell}
\usepackage[mathscr]{eucal}
\usepackage{amsbsy}
\usepackage{graphicx}
\usepackage{multirow}
\usepackage{tabularx}
\usepackage{lscape}
\usepackage[myheadings]{fullpage}
\usepackage{rotating}
\usepackage{tikz-cd,tikz,pgf}
\usepackage{tikz-cd}
\usetikzlibrary{arrows.meta, bending}
\tikzset{C/.style={circle, minimum size=8mm,
                   node contents={},
                   append after command={\pgfextra{%
        \draw[-{Straight Barb[flex']}](\tikzlastnode.150) arc (150:450:4mm);}
                }}
        }
\usepackage{caption}
\usepackage{subcaption}
\usepackage{cite}
\usepackage{array}
\usepackage{xcolor}
\usepackage{calc}
\usepackage{comment}
\usepackage{soul}
\usepackage{hyperref}
\hypersetup{
    colorlinks=true,  
    linkcolor=blue,
    citecolor=blue,
}

\newtheorem{cor*}{Corollary}
\newtheorem{thm*}{Theorem}
\newtheorem{lem*}{Lemma}
\newtheorem{prop*}{Proposition}
\newtheorem{qstn*}{Question}
\newtheorem{theorem}{Theorem}[section]
\newtheorem{cor}{Corollary}[theorem]

\newtheorem{algo}[theorem]{Algorithm}

\newtheorem{algo*}{Algorithm}
\theoremstyle{definition}
\newtheorem{defn}[theorem]{Definition}

\newtheorem{rem}[theorem]{Remark}

\newcommand{\Mod}{\mathrm{Mod}}
\newcommand{\Teich}{\mathrm{Teich}}
\newcommand{\T}{Teichm\"uller }

\newcommand{\Diffeo}{\mathrm{Diffeo}^{+}}

\newcommand{\Diff}{\mathrm{Diff}}
\newcommand{\HypMet}{\mathrm{HypMet}}
\newcommand{\DF}{\mathrm{DF}}
\newcommand{\PSL}{\mathrm{PSL}}
\newcommand{\PGL}{\mathrm{PGL}}

\usepackage{booktabs}

\everymath{\displaystyle}

\makeatletter
\@namedef{subjclassname@2022}{\textup{2022} Mathematics Subject Classification}
\makeatother

\begin{document}

\title[Teichm\"{u}ller isometries induced by certain irreducible periodic mapping classes]{Teichm\"{u}ller isometries induced by certain \\ irreducible periodic mapping classes}

\author{Atreyee Bhattacharya}
\address{(A. Bhattacharya) Department of Mathematics\\
Indian Institute of Science Education and Research Bhopal\\
Bhopal Bypass Road, Bhauri \\
Bhopal 462 066, Madhya Pradesh\\
India}
\email{atreyee@iiserb.ac.in}
\urladdr{https://sites.google.com/iiserb.ac.in/homepage-atreyee-bhattacharya/home?authuser=1}

\author{Satyajit Maity}
\address{(S. Maity) Department of Mathematics\\
Indian Institute of Science Education and Research Bhopal\\
Bhopal Bypass Road, Bhauri \\
Bhopal 462 066, Madhya Pradesh\\
India}
\email{msatyajit.194@gmail.com}

\author{Kashyap Rajeevsarathy}
\address{(K. Rajeevsarathy) Department of Mathematics\\
Indian Institute of Science Education and Research Bhopal\\
Bhopal Bypass Road, Bhauri \\
Bhopal 462 066, Madhya Pradesh\\
India}
\email{kashyap@iiserb.ac.in}
\urladdr{https://home.iiserb.ac.in/$_{\widetilde{\phantom{n}}}$kashyap/}

\subjclass[2020]{Primary 57K20, Secondary 57M60}

\keywords{surface, group action, mapping class group, Teichm\"uller space.}

\begin{abstract} Let $\mathrm{Mod}(S_g)$ be the mapping class group of the closed orientable surface of genus $g \geq 2$, and let $\mathrm{Teich}(S_g)$ be the Teichm\"{u}ller space of $S_g$. In this paper, we provide an algorithm for determining the isometries induced in $\mathrm{Teich}(S_g)$ by certain irreducible periodic mapping classes in Fenchel-Nielsen coordinates. As a demonstration of this method, we provide a description of the isometry induced by periodic mapping class of order $4g+2$ and also derive some of its geometric properties. 

\end{abstract}

\maketitle

\section{Introduction}\label{sec:intro}
Let \(\Mod(S_g)\) represent the mapping class group of the closed orientable surface \(S_g\) of genus \(g \geq 2\), and let \(\Teich(S_g)\) denote the corresponding Teichm\"{u}ller space. The Nielsen Realization Theorem \cite{Kerckhoff, Neilsen1932} states that any finite subgroup of \(\Mod(S_g)\) can be realized as a group of isometries for some hyperbolic metric on \(S_g\). Recently, \cite{Realization1} introduced methods for constructing explicit hyperbolic metrics on \(\Mod(S_g)\) that realize finite cyclic subgroups of \(\Mod(S_g)\). This was followed by \cite{Realization2}, which provided explicit parametrizations of the fixed point sets of these cyclic actions as totally geodesic Kähler submanifolds of \(\Teich(S_g)\), with respect to the Weil-Petersson metric. It is well-established that \(\Mod(S_g)\) acts properly discontinuously on the Teichm\"{u}ller space by isometries with respect to both the Teichm\"{u}ller and Weil-Petersson metrics. However, the explicit descriptions of these isometric actions in terms of the Fenchel-Nielsen coordinates of the Teichmüller space are not completely understood. In particular, aside from surface rotations, the Fenchel-Nielsen coordinates of the fixed points of the isometric action of finite cyclic subgroups \(H = \langle F \rangle\) of \(\Mod(S_g)\) on the Teichmüller space have not been thoroughly investigated.
 
The main focus of this paper is to examine the isometry induced on the Teichmüller space by an irreducible periodic mapping class \( F \in \operatorname{Mod}(S_g) \). This mapping class has a Nielsen representative (a homeomorphism of the same order) that possesses at least one fixed point on \( S_g \). These mapping classes, known as \textit{irreducible Type 1 actions}, are particularly significant as they serve as the building blocks for all periodic mapping classes of \( S_g \), as demonstrated in works such as \cite{Realization1, Realization2}. Furthermore, it has been established that for an irreducible Type 1 action \( F \), the induced map \( F_{\#} \) on \( \operatorname{Teich}(S_g) \) has a unique fixed point. This fixed point can be represented by a canonical semi-regular hyperbolic polygon \( \mathcal{P}_F \) with an appropriate side pairing that realizes \( F \) as a rotation. Utilizing the geometric realizations detailed in \cite{Realization1, Realization2}, an algorithmic description of the fixed point sets of isometric actions on \( \operatorname{Teich}(S_g) \) induced by specific periodic mapping classes has been previously provided in \cite{ASK}. In this paper, we present an algorithm to determine the induced isometry \( F_{\#} \) in Fenchel-Nielsen coordinates.
\begin{algo*}\label{algo:1}
Let $F\in \Mod(S_g)$, and $\xi, \tilde{\xi}$ be hyperbolic metrics on $S_g$ such that under the induced action $F_{\#}: \Teich(S_g) \longrightarrow \Teich(S_g)$, $F_{\#}([\xi])=[\tilde{\xi}] \in \Teich(S_g)$. 
\begin{enumerate}[\textit{Step} 1.] 
\item If $\mathcal{P}=\{\gamma_i\}_{i\in I}$ is the chosen pants decomposition for $S_g$, find the corresponding pants decomposition determined by $T$, that is $\tilde{\mathcal{P}}=\{T(\gamma_i)\}_{i\in I}$.
\item To compute the length of each $\tilde{\gamma_i}:=T(\gamma_i)$, cut the hyperbolic surface $(S_g, \xi)$ appropriately (possibly along the curve $\gamma_i$ if $[\gamma_i] \neq [\tilde{\gamma_i}]$) and lift it to the universal cover (the upper-half plane $\mathbb{H}^2$ or Poincar\'e disc $\mathbb{D}$).
\item In the universal cover, to find the corresponding unique geodesic representative $[\tilde{\gamma_i}]$, parametrize the family of simple closed smooth curves homotopic to $\tilde{\gamma_i}$, and minimize the length over the family of such curves.
\item For determining the twist parameters $\tilde{t_i}$ of $\tilde{\gamma_i}$, cut the surface $(S_g, \tilde{\xi})$ suitably along the curves $\tilde{\gamma_i}$ and lift it to the universal cover, and then use the method as described in Section \ref{Sec:Method1}. 
\end{enumerate}
\end{algo*}
\noindent  A natural simple-minded approach to calculating \( F_{\#} \) involves first factorizing a given periodic mapping class into a product of Humphries' twists~\cite{SH}, and then determining \( F_{\#} \) by composing the Teichm\"{u}ller isometries induced by these Dehn twist factors. We briefly discuss this method in Section~\ref{Sec:Method1} (see Theorems \ref{thm.4.1} - \ref{thm: Induced Tc2} for more details). However, this method has two major drawbacks: an explicit factorization of a mapping class may not be available, and even when it is, computing the composition of the induced actions from multiple Dehn twists becomes increasingly complex as the surface genus increases. It is worth mentioning that Algorithm~\ref{algo:1} effectively addresses both of these challenges.

We further demonstrate the efficiency of Algorithm~\ref{algo:1} by applying it to analyze the induced isometry of the irreducible Type 1 action \( F_g \) of order \( 4g + 2 \) on \( S_g \). This periodic mapping class is significant because it has the highest possible order in $\mathrm{Mod}(S_g)$~\cite{AW}. Additionally, \( F_g \) is a chain map, meaning it can be expressed as the product of Dehn twists around isotopy classes of \( 2g \) essential simple closed curves that form a chain on \( S_g \) (see Definition \ref{chain} for details). As an application of Algorithm \ref{algo:1}, we derive a closed-form expression for the unique fixed point of the induced isometric action \( F_{\#} \) on the \T space, in terms of the Fenchel-Nielsen coordinates.
\begin{thm*}\label{thm:fixedpt_gen_intro}
For $g\geq 2$, the unique fixed point $m_g$ of $F_{\#}$ is given in terms of the Fenchel-Nielsen coordinates, as $m_g = (\gamma_1, \gamma_2, \dots, \gamma_{3g-3}, t_1, t_2, \dots, t_{3g-3}),$
where the twist parameters $t_i = 0$ for all $1 \le i \le 3g-3$, and the length parameters $\gamma_i$ are defined by the palindromic sequence
\begin{equation*}
    \gamma_{3j} = \gamma_{3j+1} = \gamma_{3g-3-3j} = \gamma_{3g-2-3j} =2\operatorname{arcosh}\left(Y_j(g)\right) \quad \text{for } 1 \le j \le \left\lfloor \frac{g-1}{2} \right\rfloor,
\end{equation*}
and all the remaining length coordinates $\gamma_i$ are given by $\gamma_i = 2\operatorname{arcosh}\left(X(g)\right)$, where $X, Y_j$ are functions of $g$ as follows
\begin{align*}
    X(g) &= 1 + 2\cos\left(\frac{\pi}{g+1}\right), \\
    Y_j(g) &= \frac{\cos\left(\frac{\pi}{g+1}\right) - \cos\left(\frac{2j+2}{g+1}\pi\right)}{1 - \cos\left(\frac{\pi}{g+1}\right)}.
\end{align*}
\end{thm*}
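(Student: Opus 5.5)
We would prove the theorem by building the fixed point directly and then reading off its Fenchel--Nielsen coordinates, not by iterating $F_{\#}$. Since $F_g$ is an irreducible Type 1 action, the introduction already tells us that $F_{\#}$ has a unique fixed point. That point is represented by the canonical semi-regular hyperbolic polygon $\mathcal{P}_{F_g}$ with the side pairing that realizes $F_g$ as a rotation.

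For the action of order $4g+2$, the orbifold quotient is a sphere with cone points of orders $2$, $2g+1$ and $4g+2$. The fixed point $m_g$ is therefore the surface obtained from a regular hyperbolic $(4g+2)$-gon, with opposite sides identified, whose angles are fixed by the Gauss--Bonnet/orbifold condition. Concretely, the vertices form two cycles of $2g+1$ vertices each, with angle sum $2\pi$ per cycle, so each interior angle is $\tfrac{2\pi}{2g+1}$. So the first step is to fix this regular polygon in $\mathbb{D}$, centred at the origin. Its side length $s$ and its center-to-vertex and center-to-midpoint distances then follow from the standard right-triangle relation $\cosh(\text{apothem})\sin(\pi/(4g+2))=\cos(\pi/(2g+1))$ and its companions.

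The second step is to choose the pants decomposition $\mathcal{P}=\{\gamma_i\}$ adapted to the chain $c_1,\dots,c_{2g}$ (Definition~\ref{chain}), in the same way used in Algorithm~\ref{algo:1}. Each $\gamma_i$ is either a chain curve or a curve cutting off a subchain of the form $c_{k},\dots,c_{k+\ell}$. On the polygon, the chain curves are geodesics joining midpoints of opposite sides, so they are diameters. The boundary curves of subchains lift to closed broken paths through midpoints of sides that are $2j+2$ apart in the cyclic order.

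To compute each length $\gamma_i$, we write it as the translation length of the corresponding product of side-pairing transformations. Each side pairing is a hyperbolic element of $\mathrm{PSL}(2,\mathbb{R})$, namely a rotation about the origin composed with the standard translation. Its trace is computed from $\cosh$ of the apothem and the rotation angle $k\pi/(2g+1)$, and then $\gamma=2\operatorname{arcosh}(|\mathrm{tr}|/2)$. For chain curves this should give $X(g)=1+2\cos(\pi/(g+1))$. We expect the trigonometric identity converting the angle $\pi/(2g+1)$ data into the $\pi/(g+1)$ form to come from eliminating the apothem via the angle condition. For the separating-type curves we will get a product of two translations, whose half-trace has the form of a cosine difference over $1-\cos$, namely $Y_j(g)$. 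The palindromic pattern $\gamma_{3j}=\gamma_{3g-3-3j}$ will follow from the reflection symmetry of the chain, $c_k\leftrightarrow c_{2g+1-k}$, which is realized by an isometry of the regular polygon.

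The twist parameters vanish by symmetry. The regular polygon admits orientation-reversing reflections preserving every $\gamma_i$. Such a reflection sends twist $t_i$ to $-t_i$ while fixing the point of $\Teich(S_g)$, so all $t_i=0$; this also matches the perpendicularity check in the method of Section~\ref{Sec:Method1}.

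The main obstacle will be the length computation for the non-chain curves. This requires identifying exactly which word in the side pairings represents each $\gamma_{3j},\gamma_{3j+1}$ and simplifying the resulting trace to the stated closed form for $Y_j(g)$. We would first do $g=2,3$ explicitly to fix the words, then do induction on $j$ using Chebyshev-type recurrences for the traces.
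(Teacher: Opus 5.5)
You are right that $m_g$ is the regular $(4g+2)$-gon with opposite sides paired and interior angles $\frac{2\pi}{2g+1}$, with $F_g$ acting as the rotation by $\frac{2\pi}{4g+2}$. That is a different route from the paper's. The paper never builds this polygon. It asserts $F_g(c_g)=\gamma_{3g-3}$, so that all horizontal curves lie in one orbit, and it \emph{assumes} the twists vanish. It then solves right-angled-hexagon relations through $Y_j=R_jR_{j+1}$; the angle $\frac{\pi}{g+1}$ comes from the closing condition $R_{k+1}=R_{k-1}$.

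\textbf{The chain-length step fails.} Your step ``for chain curves this should give $X(g)$'' does not hold, and neither does the identification behind it. Chain curves are not diameters: every diameter passes through the centre $O$, so any two diameters intersect, while non-adjacent chain curves are disjoint. $F_g^{2g+1}$ is the rotation by $\pi$, i.e.\ the hyperelliptic involution, and its fixed points are $O$ and the side midpoints $M_k$. The chain curves are therefore the lifts of arcs joining adjacent midpoints: each is the segment $[M_k,M_{k+1}]$ together with its rotate by $\pi$, a closed geodesic of length $2d$. Write $\cosh a=\cos\frac{\pi}{2g+1}/\sin\frac{\pi}{4g+2}$ for the apothem. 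The triangle $OM_kM_{k+1}$ then gives $\cosh d=\cosh^2a-\sinh^2a\cos\frac{\pi}{2g+1}=1+\cos\frac{\pi}{2g+1}+\cos\frac{2\pi}{2g+1}$. No identity turns this into $1+2\cos\frac{\pi}{g+1}$: for $g=2$ it equals $1+\frac{\sqrt5}{2}\approx 2.118$, not $X(2)=2$. The diameters are the lifts of arcs from a midpoint to $O$. For $g=2$, $\gamma_3$ is the only curve disjoint from $\gamma_1,c_1,\gamma_2$, and it is such a diameter, with $\cosh\frac{\gamma_3}{2}=\cosh a=\frac{3+\sqrt5}{2}$. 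So the fixed point is not even of the form $(x,x,x,\ast,\ast,\ast)$. Independently, every point $(x,x,x,0,0,0)$ is the double of a pair of pants with equal boundaries, so it has an isometry of order $3$. The genus-$2$ surface with an isometry of order $10$ has no such isometry. Your own plan to work out $g=2$ first would expose this immediately.

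\textbf{The twist step fails too.} A reflection of the polygon acts on the $2g+1$ midpoint classes by $k\mapsto c-k$. No such map preserves both arcs $[M_0,M_1]$ and $[M_2,M_3]$, i.e.\ both $\gamma_1$ and $\gamma_2$; for $g=2$, the only reflection preserving $\gamma_3$ swaps $\gamma_1$ and $\gamma_2$. In fact the twists are not all zero. For $g=2$ the lengths are $(2d,2d,2a)$, and zero twists would force $\cosh\frac{\ell(c_1)}{2}=\frac{\cosh^2 d+\cosh a}{\sinh^2 d}=\frac{9+6\sqrt5}{11}\approx 2.04$. But $c_1$ is a chain curve, so $\cosh\frac{\ell(c_1)}{2}=\cosh d\approx 2.12$.

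These are exactly the points where the paper's argument is also unsupported. Since $F_g^{2g+1}$ fixes $\gamma_1$, we get $F_g^{2}(c_g)=\gamma_1$, and hence $i(F_g(c_g),\gamma_1)=i(\gamma_{3g-4},c_g)=1$. So $F_g(c_g)\neq\gamma_{3g-3}$. The vanishing of the twists is assumed, not proved. More careful trace bookkeeping will not close your gap. Carried out correctly, your polygon computation determines $m_g$ and contradicts the displayed formulas rather than proving them.
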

In particular, for $g=2$, we also obtain the description of the induced action $F_{\#}$ as follows. 
\begin{thm*} \label{thm:order10-intro}
Let $\mathcal{P}_2=\{\gamma_1, \gamma_2, \gamma_3\}$ denote a pants decomposition of $S_2$ and $\Gamma_2=\{\gamma_1, c_1, \gamma_2, c_2 \}$ be a filling chain collection where $\mathcal{C}_2=\{c_1, c_2, c_3\}$ denotes the corresponding collection of seams curves as shown in Figure \ref{Fig. Pants_seams_curves_S2}. Then the induced action $F_{\#}: \Teich(S_2) \longrightarrow \Teich(S_2)$ of the mapping class $F=T_{\gamma_1}T_{c_1}T_{\gamma_2}T_{c_2}$ is given by 
$$(\gamma_1, \gamma_2, \gamma_3, t_1, t_2, t_3) \mapsto (c_1, c_2, c_3, \Tilde{t}_1, \Tilde{t}_2, \Tilde{t}_3)$$
 where $c_1, c_2, c_3$ are as follows
$$\cosh{\frac{c_i}{2}}=|\cosh(\ln{\sqrt{r_i}})\cosh{\frac{t_i+b_i}{2}} - \frac{a_i}{2\sqrt{r_i}}\sinh{\frac{t_i+b_i}{2}}|$$
and $\Tilde{t}_1, \Tilde{t}_2, \Tilde{t}_3$ are given by an algorithmic method. In particular, the unique fixed point of $F_{\#}$ is of the form
 $m_0:=(x_0, x_0, x_0, 0, 0, 0)\in \mathcal{H}_0$ where $x_0=\operatorname{arcosh} 2=\ln(7+2\sqrt{3})$.
\begin{figure}[h]
     \centering
        \includegraphics[width=0.5\textwidth]{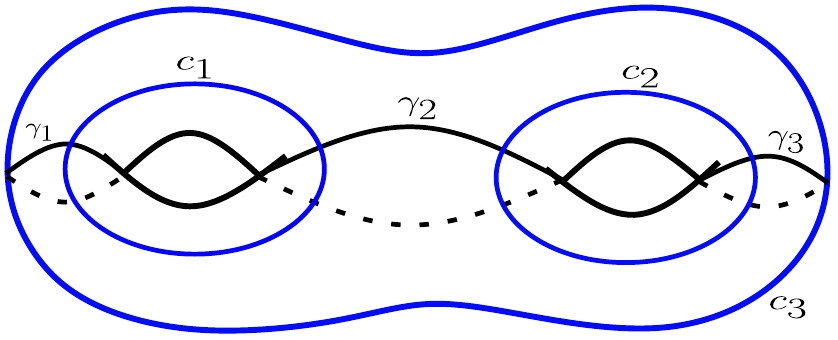}
         \caption{Pants and seams curves on $S_2$.}
        \label{Fig. Pants_seams_curves_S2}
\end{figure}
\end{thm*}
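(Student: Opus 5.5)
The plan is to run Algorithm~\ref{algo:1} on $F=T_{\gamma_1}T_{c_1}T_{\gamma_2}T_{c_2}$ and then specialise the resulting formulas to the fixed point. \textbf{Step 1} is to determine the image pants decomposition $\tilde{\mathcal P}_2=\{F(\gamma_1),F(\gamma_2),F(\gamma_3)\}$. Since $\Gamma_2$ is a chain, the braid relations give $T_aT_b(a)=b$ whenever $i(a,b)=1$, and $T_a(b)=b$ whenever $i(a,b)=0$. Pushing each $\gamma_i$ through the four twists should therefore show that $F$ carries $\gamma_i$ to the seam curve $c_i$, up to isotopy. For $\gamma_3$ I would first express it via the chain relation, since the boundary of a neighbourhood of a chain is determined by the chain. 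This justifies the first half of the formula: the new length coordinates are the geodesic lengths of $c_1,c_2,c_3$ in the metric $\xi=(\gamma_1,\gamma_2,\gamma_3,t_1,t_2,t_3)$.

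\textbf{Steps 2--3} compute $\ell_\xi(c_i)$ by cutting $(S_2,\xi)$ along the pants curves and lifting to $\mathbb H^2$.
\begin{enumerate}
\item The seam $c_i$ crosses $\gamma_i$ and otherwise runs along common perpendiculars inside the two pairs of pants adjacent to $\gamma_i$.
\item I would write the holonomy of $c_i$ in $\mathrm{PSL}(2,\mathbb R)$ as a product of three kinds of factors: a hyperbolic translation of length $t_i$ along the axis of $\gamma_i$, a translation along the common perpendicular inside each pair of pants, and a fixed rotation by $\pi/2$ at each corner.
\item The lengths of the perpendiculars come from right-angled hexagon formulas, in terms of $\gamma_1,\gamma_2,\gamma_3$.
\end{enumerate}
Conjugating so that the axis of $\gamma_i$ is the imaginary axis, the trace of the product should take the form
\[
2\cosh\frac{c_i}{2}=\Bigl|\,\bigl(\sqrt{r_i}+1/\sqrt{r_i}\bigr)\cosh\tfrac{t_i+b_i}{2}-\tfrac{a_i}{\sqrt{r_i}}\sinh\tfrac{t_i+b_i}{2}\Bigr|.
\]
Here $r_i$ encodes the ratio of endpoints of the perpendicular's axis, and $b_i$ is the offset between the feet of the two perpendiculars on $\gamma_i$. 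Both are explicit functions of the cuff lengths. Taking absolute values gives the stated expression with $\cosh(\ln\sqrt{r_i})$.

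The length computation is routine once the matrix set-up is fixed. Step 4 (the twists $\tilde t_i$) is left algorithmic, as the statement allows. One cuts $(S_2,\tilde\xi)$ along the $c_i$ and measures the displacement of perpendicular feet, as in Section~\ref{Sec:Method1}.

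For the fixed point, I would invoke the known fact that an irreducible Type 1 action has a unique fixed point in $\Teich(S_2)$, realised by the regular decagon $\mathcal P_F$ with $F$ acting as rotation by $2\pi/10$.
\begin{enumerate}
\item The symmetry of this polygon shows that the common perpendiculars meet $\gamma_i$ at antipodal/aligned points, so all twists vanish. Equivalently, the reflection symmetry of the decagon forces $t_i=0$.
\item The symmetry also interchanges the three pants curves, so $\gamma_1=\gamma_2=\gamma_3=x$.
\item Substituting $(x,x,x,0,0,0)$ into the length formula, the fixed-point condition $c_i=x$ becomes a single equation in $\cosh(x/2)$. I expect it to yield $\cosh(x/2)=2$, matching $X(2)=1+2\cos(\pi/3)=2$ of Theorem~\ref{thm:fixedpt_gen_intro}. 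The closed form for $x_0$ then follows by solving for the logarithm.
\end{enumerate}

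The main obstacle I expect is bookkeeping rather than ideas. One must pin down exactly which curve $F(\gamma_3)$ is, and get correct signs and offsets $b_i$ for the twist conventions in the holonomy product. A sign slip there changes $\cosh$ into $\sinh$ terms and ruins the fixed-point check. I would first verify the formula numerically at the decagon metric before trusting it in general.
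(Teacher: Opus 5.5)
Your Steps 1--3 are sound and agree with the paper. $F(\gamma_i)=c_i$ holds, and your chain-relation idea for $\gamma_3$ works if you use the $3$-chain $\gamma_1,c_1,\gamma_2$: both boundary curves of its neighbourhood are isotopic to $\gamma_3$, and $F$ carries it to $c_1,\gamma_2,c_2$, whose neighbourhood has boundary $c_3$. Reading $\ell_\xi(c_i)$ off a holonomy trace is an equivalent alternative to the paper's minimisation of $\cosh\Gamma_l$ over a one-parameter family of arcs.

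The fixed-point part has a genuine gap and cannot be completed, because both of your symmetry claims are false.

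\begin{itemize}
\item[(1)] Since $F^5$ is the hyperelliptic involution, which fixes every curve, the $\langle F\rangle$-orbit of $\gamma_1$ is $\{\gamma_1,c_1,\gamma_2,c_2,d\}$, with $d=F(c_2)$ and $F(d)=\gamma_1$. Hence $i(d,\gamma_1)=i(c_2,d)=i(\gamma_2,c_2)=1$, so $d\neq\gamma_3$. No element of $\langle F\rangle$ carries $\gamma_1$ to $\gamma_3$, and $F$ does not exchange $\mathcal P_2$ and $\mathcal C_2$.
\item[(2)] The decagon model makes this concrete. There $\gamma_3$ and $c_3$ are diameters through the centre (the $F$-fixed Weierstrass point), with $\cosh\frac{\ell}{2}=\cos\frac{\pi}{5}/\sin\frac{\pi}{10}=\frac{3+\sqrt5}{2}$. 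By contrast, $\gamma_1,c_1,\gamma_2,c_2,d$ are doubled chords joining midpoints of adjacent sides, with $\cosh\frac{\ell}{2}=1+\frac{\sqrt5}{2}$.
\item[(3)] In that model $\gamma_1$ and $c_1$ meet at roughly $99^\circ$, so the twists cannot all vanish. The $F$-invariant metric therefore does not lie in $\mathcal H_0$, and solving $c_i=x$ there cannot produce it.
\end{itemize}

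The paper argues differently: it asserts $F_\#(\mathcal H_0)\subset\mathcal H_0$ with $F_\#$ acting by $f$, solves $f(x)=x$, and invokes uniqueness. That route rests on the same false claim $F(\mathcal C_2)=\mathcal P_2$. The general theorem's proof likewise asserts $F_g(c_g)=\gamma_{3g-3}$, so $X(2)=2$ gives you no independent support.

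\begin{itemize}
\item[(1)] The invariance fails outright. With $X=\cosh\frac{x}{2}$ one has $\cosh\frac{f(x)}{2}=\frac{X}{X-1}$, which is an involution in $X$. So $F_\#^2$ would fix $\mathcal H_0$ pointwise, whereas $F^2$ has order $5$ with quotient $S^2(5,5,5)$ and hence a unique fixed point.
\item[(2)] The zero-twist point with $\cosh\frac{x}{2}=2$ is the surface built from four regular right-angled hexagons. It is the fixed point of the order-$6$ map $T_{\gamma_1}T_{c_1}T_{\gamma_2}T_{c_2}T_{\gamma_3}$, which genuinely cycles $\gamma_1\to c_1\to\gamma_2\to c_2\to\gamma_3\to c_3\to\gamma_1$.
\item[(3)] At that point $F(c_2)$ lifts the diagonal of a right-angled hexagon face skipping one vertex, so $\cosh\frac{\ell}{2}=4\neq 2$ and $F$ is not an isometry there.
\end{itemize}

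Finally, even granting $\cosh\frac{x_0}{2}=2$, one gets $x_0=2\operatorname{arcosh}2=\ln(7+4\sqrt3)$. That is neither $\operatorname{arcosh}2$ nor $\ln(7+2\sqrt3)$, so the stated closed form does not ``follow'' either.
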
  

As another application of Algorithm \ref{algo:1}, we explore some properties of \( F_{\#} \). Considering that $\mathrm{Teich}(S_g)$ with the Weil-Petersson metric is endowed with the standard symplectic structure, we show that there are isotropic submanifolds that are left invariant under the $\langle F_{\#} \rangle$-action (see Theorem \ref{thm. 4g+2 order action on Sg} for further details). Notably, in the special case where \( g=2 \), a specific isotropic submanifold we obtain is Lagrangian. For the sake of brevity, we will state only the result relevant to \( g=2 \) here.
\begin{thm*}
\label{thm3}
Given a pants decomposition $\mathcal{P}_2$ of $S_2$, a filling chain collection $\Gamma_2$, and  corresponding collection $\mathcal{C}_2$ of seams curves as in Theorem \ref{thm:order10-intro}, $F_{\#}$ has the following properties:
\begin{enumerate}[(i)]
\item $\Tilde{t}_1=\Tilde{t}_2=\Tilde{t}_3=0$ and 
$\cosh{\frac{c_i}{2}}=\frac{\cosh{\frac{\gamma_i}{2}}\cosh{\frac{\gamma_{i+1}}{2}} \hspace{0.1cm}+ \hspace{0.1cm}\cosh{\frac{\gamma_{i+2}}{2}}}{\sinh{\frac{\gamma_i}{2}}\sinh{\frac{\gamma_{i+1}}{2}}}$ for $i=1, 2, 3$, with $i+1, i+2$ counted up to congruence modulo $3$.
\item The submanifold $\mathcal{H}:=\{(x_1, x_2, x_3, 0, 0, 0) | x_1, x_2, x_3>0 \} \subset \Teich(S_2)$ is an $F_{\#}$-invariant Lagrangian submanifold of $\Teich(S_2)$ with respect to its natural symplectic structure. 
\item The $1$-dimensional submanifold $\mathcal{H}_0:=\{(x, x, x, 0, 0, 0)| x>0\} \subset \mathcal{H}$ is isotropic and $F_{\#}$-invariant where $F_{\#}(x, x, x, 0, 0, 0)=(f(x), f(x), f(x), 0, 0, 0)$ and $f: \mathbb{R} \to \mathbb{R}_{+}$ is given by $f(x)=2\operatorname{arcosh}(\frac{1}{1-\operatorname{sech}{\frac{x}{2}}})$. 
\item $F_{\#}$ is not an Euclidean isometry.
\end{enumerate} 
\end{thm*}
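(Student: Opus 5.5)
The plan is to obtain (i) from Algorithm~\ref{algo:1} applied to $F=T_{\gamma_1}T_{c_1}T_{\gamma_2}T_{c_2}$, specialised to the locus where all twists vanish, and then to read off (ii)--(iv) from the resulting formula.

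\textbf{Step (i): computing $F_{\#}$ on $\mathcal{H}$.} Since $F$ is a chain map of order $10$, first identify $F(\gamma_i)$ with the seam curve $c_i$; this is Step 1 of the algorithm, and it is already implicit in Theorem~\ref{thm:order10-intro}. On $\mathcal{H}$ all twists are zero. There the surface is obtained by gluing two pants with their seams matched, so it carries an orientation-reversing reflection fixing every seam. In particular each $c_i$ is a closed geodesic made of two seam arcs. Its length is then given by the standard right-angled hexagon formula: the seam of a pair of pants joining the boundaries $\gamma_i$ and $\gamma_{i+1}$ has length $d$ with
\[
\cosh d=\frac{\cosh\frac{\gamma_{i+2}}{2}+\cosh\frac{\gamma_i}{2}\cosh\frac{\gamma_{i+1}}{2}}{\sinh\frac{\gamma_i}{2}\sinh\frac{\gamma_{i+1}}{2}}.
\]
Because $c_i$ consists of two copies of this seam, $c_i=2d$, which gives the claimed expression for $\cosh\frac{c_i}{2}$. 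As a consistency check, substituting $t_i=0$ into the formula of Theorem~\ref{thm:order10-intro} should give the same value.

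For the twists $\tilde t_i$, the argument is by symmetry, which I expect to be the main obstacle. The reflection $\rho$ fixing the seams commutes with $F$ up to inversion: $\rho F\rho=F^{-1}$, since $\rho$ reverses each Dehn twist and the chain is symmetric. Hence the image metric is again $\rho$-symmetric with respect to the new pants curves $c_i$. A $\rho$-symmetric metric with reflection-invariant pants curves has twists in $\{0\}$ or half-twists. Continuity along $\mathcal{H}$, together with the value at the fixed point $m_0$, then forces $\tilde t_i=0$. If this symmetry argument proves delicate, the fallback is the explicit lift-to-$\mathbb{H}^2$ computation of Step 4 of the algorithm, carried out with $t_i=0$.

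\textbf{Steps (ii) and (iii).} Invariance of $\mathcal{H}$ follows directly from (i). The Wolpert formula $\omega=\sum d\ell_i\wedge d\tau_i$ vanishes on $\{\tau=0\}$, and $\dim\mathcal{H}=3$, which is half of $6$; so $\mathcal{H}$ is Lagrangian. For (iii), set $\gamma_i=x$ and write $u=\cosh\frac{x}{2}$. Then (i) gives
\[
\cosh\frac{c}{2}=\frac{u^2+u}{u^2-1}=\frac{u}{u-1}=\frac{1}{1-\operatorname{sech}\frac{x}{2}},
\]
so $c=f(x)$. Thus $\mathcal{H}_0$ is invariant, and it is isotropic because it lies inside the Lagrangian $\mathcal{H}$. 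As a check, the fixed point $f(x)=x$ gives $u^2-2u=0$, that is $u=2$, consistent with $m_0$.

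\textbf{Step (iv).} It suffices to show that $f$ is not of the form $x\mapsto \pm x+b$ on $\mathcal{H}_0$. An isometry of $\mathbb{R}^6$ preserving the line $\mathcal{H}_0$ would act on it affinely with slope $\pm1$. But $f(x)\to\infty$ as $x\to 0$, and $f'$ is non-constant. Hence $F_{\#}$ is not a Euclidean isometry in Fenchel--Nielsen coordinates.
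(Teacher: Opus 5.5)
\textbf{The gap.} The problem is the twist claim $\tilde t_1=\tilde t_2=\tilde t_3=0$ in (i). Everything downstream depends on it: the invariance in (ii) and (iii), and your proof of (iv), which uses the invariant line $\mathcal H_0$.

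Your symmetry argument fails at each step:
\begin{enumerate}[(a)]
\item Because $\rho$ reverses orientation and fixes each chain curve, $\rho F\rho^{-1}=T_{\gamma_1}^{-1}T_{c_1}^{-1}T_{\gamma_2}^{-1}T_{c_2}^{-1}$. This is the inverse of the reversed product $T_{c_2}T_{\gamma_2}T_{c_1}T_{\gamma_1}$, which is not $F$.
\item Even granting $\rho F\rho^{-1}=F^{-1}$, the image metric would only be symmetric under the conjugate reflection $F^{\pm 2}\rho$, not under $\rho$.
\item The ``zero or half-twist'' conclusion only determines twists modulo lengths.
\item The continuity step assumes $m_0\in\mathcal H$, which is part of what is being claimed.
\end{enumerate}

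The underlying problem is the marking. By definition, $\tilde t_i$ is the twist of $F(\gamma_i)=c_i$ in $\xi$ measured against the seam system $F(c_1),F(c_2),F(c_3)$. On $\mathcal H$ one only gets zero twist against the seams $\gamma_1,\gamma_2,\gamma_3$, and these two systems differ. The curve $F(c_2)=T_{\gamma_1}T_{c_1}T_{\gamma_2}(c_2)$ is disjoint from $c_1$ and $\gamma_2$ and crosses $c_2$ and $c_3$ once each, but it also crosses $\gamma_1$ once. This forces $F(c_2)=T_{c_3}^{\pm1}(\gamma_3)$, so on $\mathcal H$ one gets $\tilde t_3=\pm\ell_\xi(c_3)\neq 0$.

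\textbf{The claim is false.} So the statement cannot be proved as written. The paper's own argument has the same flaw: it uses orthogonality of $[\gamma_i]$ and $[c_j]$, treating $\mathcal P_2$ as the seams of $F(\mathcal P_2)$. Following it will not close the gap.

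An independent check: $u\mapsto u/(u-1)$ is an involution of $(1,\infty)$, so $f\circ f=\mathrm{id}$. Then (iii) would make $F_\#^2$ the identity on the whole curve $\mathcal H_0$. But $F^2$ is an irreducible class of order $5$ (its quotient orbifold is a sphere with three cone points of order $5$), so $F_\#^2$ has exactly one fixed point. Similarly, each point of $\mathcal H_0$ is the double of a pair of pants with equal boundary lengths, so it has an isometry of order $3$. The surface $w^2=z^5-1$, whose automorphism group is $\mathbb Z/10$, has none; thus $m_0\notin\mathcal H_0$.

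\textbf{What survives.} Your length computation in (i) for $\xi\in\mathcal H$ is correct and matches the paper's. Your Wolpert-formula proof that $\mathcal H$ is Lagrangian and $\mathcal H_0$ is isotropic also stands. Part (iv) would need an argument that does not go through (iii).
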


We conclude the paper with the following consequence of Theorem \ref{thm3}, which describes the asymptotic behavior of the unique fixed point \( m_g \). In particular, this implies that as \( g \to \infty \), the injectivity radius remains bounded from below, thereby ruling out the possibility of a collapse.
\begin{cor*}\label{thm:asymptotic-limit}
 Consider $F_g \in \Mod(S_g)$, its induced action $F_{\#}: \Teich(S_g) \longrightarrow \Teich(S_g)$, and the unique fixed point $m_g \in \Teich(S_g)$ of $F_{\#}$ as mentioned in Theorem \ref{thm. 4g+2  order action on Sg}. As $g \to \infty$, the Fenchel-Nielsen length coordinates $X(g)$ and $Y_j(g)$ of $m_g$ converge point-wise to universal integer values that are independent of $g$ as follows:
\begin{align*}
    \lim_{g \to \infty} X(g) &= 3, \text{ and} \\
    \lim_{g \to \infty} Y_j(g) &= (2j+1)(2j+3) = 4j^2 + 8j + 3.
\end{align*}
\end{cor*}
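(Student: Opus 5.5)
The plan is to take the closed forms for $X(g)$ and $Y_j(g)$ from Theorem \ref{thm:fixedpt_gen_intro} as given and compute the limits directly. Fix $j \ge 1$ and consider only $g$ large enough that $j \le \lfloor (g-1)/2 \rfloor$. Put $\theta = \theta_g = \pi/(g+1)$, so that $\theta \to 0^+$ as $g \to \infty$. In this variable,
\begin{equation*}
X(g) = 1 + 2\cos\theta, \qquad Y_j(g) = \frac{\cos\theta - \cos\bigl((2j+2)\theta\bigr)}{1-\cos\theta}.
\end{equation*}

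The limit of $X$ follows at once from continuity of cosine. Since $\cos\theta \to 1$, we get $X(g) \to 3$.

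For $Y_j$, both the numerator and the denominator tend to $0$, so we have a $0/0$ form. I will rewrite both with the identity $\cos A - \cos B = 2\sin\frac{A+B}{2}\sin\frac{B-A}{2}$. This gives
\begin{equation*}
Y_j(g) = \frac{2\sin\bigl(\tfrac{(2j+3)\theta}{2}\bigr)\sin\bigl(\tfrac{(2j+1)\theta}{2}\bigr)}{2\sin^2(\theta/2)}.
\end{equation*}
Each factor has the form $\sin(c\theta/2)/\sin(\theta/2)$, which tends to $c$ as $\theta \to 0$ by $\sin x/x \to 1$. Hence $Y_j(g) \to (2j+1)(2j+3) = 4j^2 + 8j + 3$.

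An alternative is to use the Taylor expansions $1 - \cos\theta \sim \theta^2/2$ and $\cos\theta - \cos(k\theta) \sim (k^2-1)\theta^2/2$ with $k = 2j+2$. This gives the limit $k^2 - 1 = (2j+1)(2j+3)$ and serves as a check.

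The only real subtlety is interpreting the pointwise limit. The index $j$ stays fixed while $g \to \infty$, and the defining range $1 \le j \le \lfloor (g-1)/2 \rfloor$ eventually contains every fixed $j$. So $Y_j(g)$ is defined for all sufficiently large $g$, and the limit makes sense.

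Finally, I will note the geometric consequence claimed after the statement. Every length coordinate equals $2\operatorname{arcosh}$ of one of the values $X(g)$ or $Y_j(g)$. Each $Y_j(g)$ is at least of order $Y_1$, and all these values stay bounded away from $1$; for instance, $X(g) \ge 1 + 2\cos(\pi/3) = 2$. Therefore the pants curve lengths are bounded below uniformly in $g$.
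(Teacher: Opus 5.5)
Your proof is correct and is essentially the paper's. $X(g)\to 3$ follows by continuity as $\theta_g=\pi/(g+1)\to 0$, and your half-angle factorization of $Y_j(g)$ is just the form $Y_j=R_jR_{j+1}$ with $R_n=\sin\frac{(2n+1)\theta}{2}/\sin\frac{\theta}{2}$ from the proof of the theorem; the paper instead resolves the $0/0$ form with the cosine Taylor expansion that you give as a check. Your closing remark goes beyond the statement, but its vague phrase ``at least of order $Y_1$'' is easily made precise: $j\le\lfloor\frac{g-1}{2}\rfloor$ gives $4\theta_g\le(2j+2)\theta_g\le\pi$, so monotonicity of cosine yields $Y_j(g)\ge Y_1(g)=X(X^2-X-1)\ge 3+2\sqrt{2}$ for $g\ge 3$.
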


\section{Preliminaries}
In this section we briefly recall certain basic notions and relevant results pertinent to the central theme of this paper.

\subsection{Humphries generators of the mapping class group}
Let $S_g$ denote a closed, connected, oriented smooth surface of genus $g\geq 0$, $\Diffeo(S_g)$ be the group of its orientation-preserving diffeomorphisms, and $\Mod(S_g)$ be the mapping class group of $S_g$. Given a simple closed curve on $S_g$, let $T_c$ denote the corresponding (left-handed) Dehn twist map about $c$. It is well known \cite{SH} that for $g\geq 0$, $\Mod(S_g)$ is generated by $2g+1$ Dehn twists about  non-separating simple closed curves in $S_g$. The following result provides a precise description of these generators.
\begin{theorem}[{Humphries}\cite{SH}]
\label{thm: Humphries}
For $g \geq 2$, $\Mod(S_g)$ is generated by Dehn twists about the $2g+1$ isotopy classes of non-separating simple closed curves $c_1,\ldots , c_g,\gamma_1,\ldots, \gamma_{g+1}$ in $S_g$ as shown in Figure \ref{fig. Humphries curves}.
\end{theorem}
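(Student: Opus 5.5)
We take Humphries' theorem as a known classical result and outline the standard proof strategy (following Humphries, via Lickorish's generating set), rather than reproving everything from scratch. The plan is to start from Lickorish's theorem: $\Mod(S_g)$ is generated by Dehn twists about the $3g-1$ curves $a_1,\dots,a_g$ (meridians), $b_1,\dots,b_g$ (longitudes) and $e_1,\dots,e_{g-1}$ (the curves linking consecutive handles). We then show that every Lickorish generator lies in the subgroup $G=\langle T_{c_1},\dots,T_{c_g},T_{\gamma_1},\dots,T_{\gamma_{g+1}}\rangle$. With the labelling of Figure \ref{fig. Humphries curves}, the chain $\gamma_1,c_1,\gamma_2,c_2,\dots,c_g,\gamma_{g+1}$ together with one extra curve realizes all but $g-1$ of the Lickorish curves directly. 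The meridians not already in the chain are the ones that must be produced.

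The key tool is the conjugation relation $fT_cf^{-1}=T_{f(c)}$ for $f\in\Mod(S_g)$. Together with it we use the braid relation: if $i(a,b)=1$, then $T_aT_b(a)=b$, and hence $T_aT_bT_a=T_bT_aT_b$. So if $T_a,T_b\in G$ and $d$ is obtained from a curve already available by applying products of such twists, then $T_d\in G$.

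The inductive step exhibits each missing meridian $a_{k+1}$ as the image of $a_k$ under an explicit word in the chain twists. A natural choice is $h_k=T_{b_k}T_{e_k}T_{b_{k+1}}T_{e_k}T_{b_k}$, or a similar product along the local chain $a_k,b_k,e_k,b_{k+1},a_{k+1}$. This uses the fact that a regular neighborhood of this chain is a genus-two subsurface with two boundary components. In that subsurface, the standard \emph{lantern} or \emph{chain} arguments show that a suitable word maps $a_k$ to $a_{k+1}$. This is checked on the local model by tracking intersection numbers.

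Starting from the meridian in the generating set and inducting on $k$, we obtain all $T_{a_k}\in G$. Hence $G$ contains Lickorish's generating set, so $G=\Mod(S_g)$.

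The main obstacle is verifying the curve identity $h_k(a_k)=a_{k+1}$ for an explicit word. This requires a careful picture in the genus-two subsurface. Our first attempt is to use change of coordinates: find a homeomorphism, built from the available twists, that takes the pair $(a_k,b_k)$ to $(a_{k+1},b_{k+1})$. We do this by composing the maps $T_xT_y$ associated with consecutive chain curves, each of which slides one chain curve onto its neighbour.
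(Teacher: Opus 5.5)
The paper does not prove this statement; it is quoted from Humphries. So there is no internal argument to compare with. Judged on its own, your reduction to Lickorish's generators is the right first step. But the inductive step, which carries all of the content, is not supplied, and the mechanisms you propose for it cannot work.

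\emph{Your word fails.} The word $h_k=T_{b_k}T_{e_k}T_{b_{k+1}}T_{e_k}T_{b_k}$ does not send $a_k$ to $a_{k+1}$. On $H_1(S_g;\mathbb{Z})$ it sends $[a_k]$ to $\pm[a_{k+1}]\pm[b_k]\pm[b_{k+1}]$. So $h_k(a_k)$ has algebraic intersection $\pm1$ with $a_k$, whereas $a_{k+1}$ is disjoint from $a_k$.

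\emph{Your fallback is circular.} Composing slides $T_xT_y(x)=y$ along $a_k,b_k,e_k,b_{k+1},a_{k+1}$ needs $T_{a_{k+1}}$ itself for the last slide onto $a_{k+1}$.

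\emph{No local word can work.} No product of twists about $a_k,b_k,e_k,b_{k+1}$ alone can carry $a_k$ to $a_{k+1}$. For $k=1$ these four curves lie on the chain, so they are fixed by the hyperelliptic involution $\iota$ preserving the chain. Hence every such product commutes with $\iota$, while $\iota(a_2)\neq a_2$ once $g\ge 3$. For every $k$, the curves $a_k,e_k,a_{k+1}$ bound a pair of pants, so a change of coordinates carries each configuration $a_k,b_k,e_k,b_{k+1},a_{k+1}$ to the one with $k=1$. The obstruction therefore persists for all $k$.

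You have also misread the generating set. The $2g+1$ curves are the $2g$-chain $a_1,b_1,e_1,b_2,\dots,e_{g-1},b_g$ together with the second meridian $a_2$, which meets only the fourth chain curve $b_2$. A single $(2g+1)$-chain would not do for $g\ge 3$, since all its twists commute with $\iota$. Thus $T_{a_1},T_{a_2}\in G$, and exactly the $g-2$ twists $T_{a_3},\dots,T_{a_g}$ are missing. Producing them requires curves outside the local chain, notably the previous meridian $a_{k-1}$.

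One argument that works is the lantern relation in the four-holed sphere $Q$ bounded by $a_{k-1},e_{k-1},e_k,a_{k+1}$. This $Q$ is the union of the pairs of pants bounded by $a_{k-1},e_{k-1},a_k$ and by $a_k,e_k,a_{k+1}$, and its lantern curves are $a_k$ and two curves $y,z$.
\begin{itemize}
\item Take $y$ to be the boundary component, lying in $Q$, of a neighbourhood of the $3$-chain $e_{k-1},b_k,e_k$.
\item The square of $T_{a_{k-1}}T_{b_{k-1}}T_{e_{k-1}}T_{b_k}T_{e_k}$ carries the $3$-chain $a_{k-1},b_{k-1},e_{k-1}$, one of whose boundary curves is $a_k$, onto $e_{k-1},b_k,e_k$. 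Together with the $3$-chain relation this gives $T_y\in G$.
\item The half-twist on $a_{k-1},b_{k-1},e_{k-1}$ swaps $a_{k-1}$ and $e_{k-1}$ while preserving $Q$ and $a_k$. So this half-twist followed by a power of $T_{a_k}$ carries $y$ to $z$, giving $T_z\in G$.
\item The lantern relation $T_{a_{k-1}}T_{e_{k-1}}T_{e_k}T_{a_{k+1}}=T_{a_k}T_yT_z$ then yields $T_{a_{k+1}}\in G$.
\end{itemize}
Induction from $k=2$ finishes the proof.
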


\begin{figure}[h]
    \centering
    \includegraphics[width=0.8\textwidth]{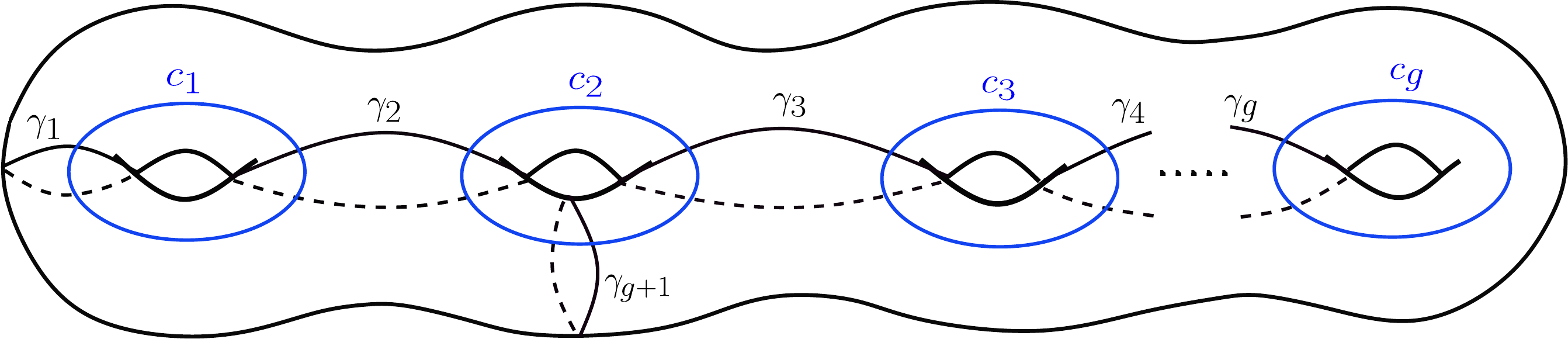}
    \caption{The curves determining the Humphries twists in $\mathrm{Mod}(S_g)$.}
    \label{fig. Humphries curves}
\end{figure}

\subsection{Action of $\Mod(S_g)$ on the \T space of $S_g$} Recall that the \T  space of $S_g$ for $g\geq 2$ is defined to be the quotient space $\Teich(S_g)=\HypMet(S_g)/\Diff_0(S_g)$ where $\HypMet(S_g)$ is the set of all hyperbolic metrics on $S_g$, $\Diff_0(S_g)$ be the group of all diffeomorphisms of $S_g$ isotopic to the identity map, and $\Diff_0(S_g)$ acts on $\HypMet(S_g)$ by pullback of metrics.  
Equivalently, $\Teich(S_g) = \DF(\pi_1(S_g), \PSL_2(\mathbb{R}))/\PGL_2(\mathbb{R})$ which naturally provides the Euclidean topology on $\Teich(S_g)$ where the action of $\PGL_2(\mathbb{R})$ on the set of all discrete faithful representations of $\pi_1(S_g)$ into $\PSL_2(\mathbb{R})$, denoted by $\DF(\pi_1(S_g), \PSL_2(\mathbb{R}))$, is given by conjugacy $(A\cdot\rho)(\alpha)=A\rho(\alpha)A^{-1}.$ 

The action of $\Diff_0(S_g)$ on $\HypMet(S_g)$ induces a natural action of $\Mod(S_g)$ on $\Teich(S_g)$ as follows: Given $F=[f] \in \operatorname{Mod}\left(S_g\right)$ and $[\xi] \in \operatorname{Teich}\left(S_g\right)$, we have $F \cdot[\xi]=\left[f^*(\xi)\right].$ Moreover, it turns out that with respect to the Weil-Petersson metric $\xi_{WP}$, the orientation-preserving group of isometries of the $S_g$ is precisely $\Mod(S_g)$ \cite{MasurWolf2002}.

\subsection{The length and twist Parameters.}\label{sub-sec:twist}
 Let $\gamma$ be an essential simple closed curve on $S_g$ ($g \geq 2$) and $[\xi] \in \Teich(S_g)$, where $\xi \in \operatorname{HypMet}\left(S_g\right)$. If $[\gamma]$ denotes the unique geodesic homotopic to $\gamma$ in $\left(S_g, \xi\right)$, then the $[\xi]$ length of $\gamma$, denoted by $\ell_{\gamma}([\xi])$, is defined as $\ell_{\gamma}([\xi])=\ell_{[\gamma]}(\xi)$ i.e. the $\xi$ length of $[\gamma]$.  
 
Let $\mathcal{P}=\left\{\gamma_1, \gamma_2, \ldots, \gamma_{3 g-3}\right\}$ be a pants decomposition of $S_g$. The length parameters of $\mathcal{P}$ with respect to $[\xi]$ is the $(3g-3)$-tuple $(\ell_{\gamma_1}([\xi]), \ldots , \ell_{\gamma_{3g-3}}([\xi]))$ of lengths of the pants curves with respect to $[\xi]$ as defined above. Let $\mathcal{S}=\left\{c_1, c_2, \ldots, c_{g+1}\right\}$ be a multicurve comprising $g+1$ disjoint simple closed curves on $S_g$, referred to as \textit{seams curves} for the pants decomposition $\mathcal{P}$ (see \cite[Chapter 10]{primer} for details). Then the \textit{twist parameter of $\gamma_i$} with respect to $[\xi]$ is the signed distance (measuring along the unique $\xi$-geodesic representative $\gamma_i^{\prime}$ of $[\gamma_i]$) between the two points where the two common perpendiculars intersect $\gamma_i^{\prime}$ by following any one of the two seams curve passing through it (see \cite{primer, Bruno} for more details). We call a twist to be positive if one of the two common perpendiculars is on the left of the other (in the universal cover), and negative otherwise. Figure \ref{Fig. 3} shows the cases of zero and positive twists, where the twist is taken to be positive if one of the endpoints of the intersection point of $\alpha$ and $\gamma$ is on the left of the other one when lifting the surface into the universal cover.

    \begin{figure}[ht]
     \centering
     \begin{subfigure}[t]{0.2\textwidth}
         \centering
         \includegraphics[width=\textwidth]{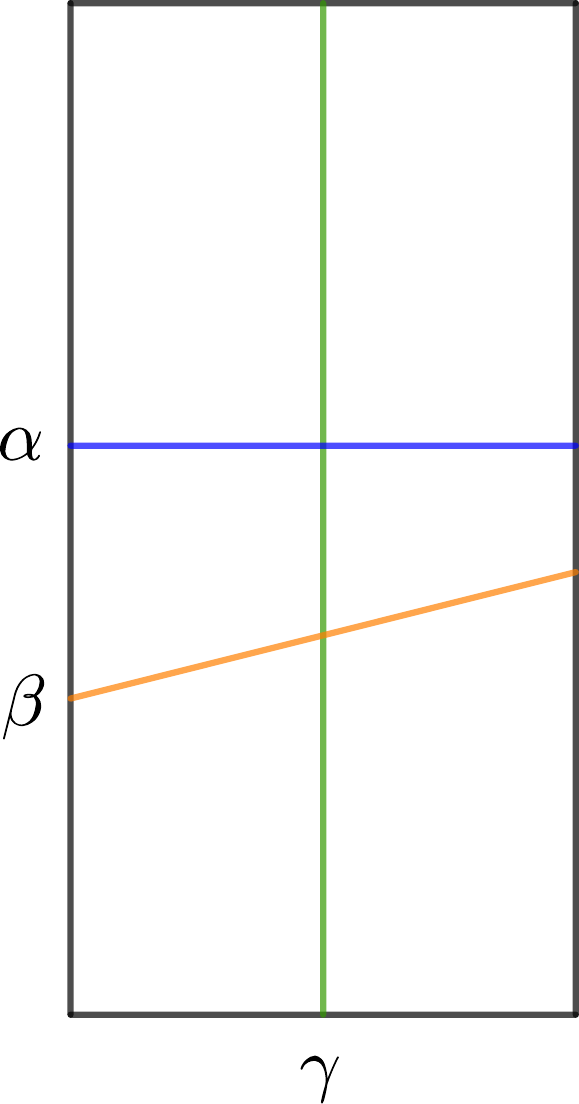}
         \caption{$t_\gamma=0$.}
     \end{subfigure}
     \hspace{2 cm}
     \begin{subfigure}[t]{0.2\textwidth}
         \centering
         \includegraphics[width=\textwidth]{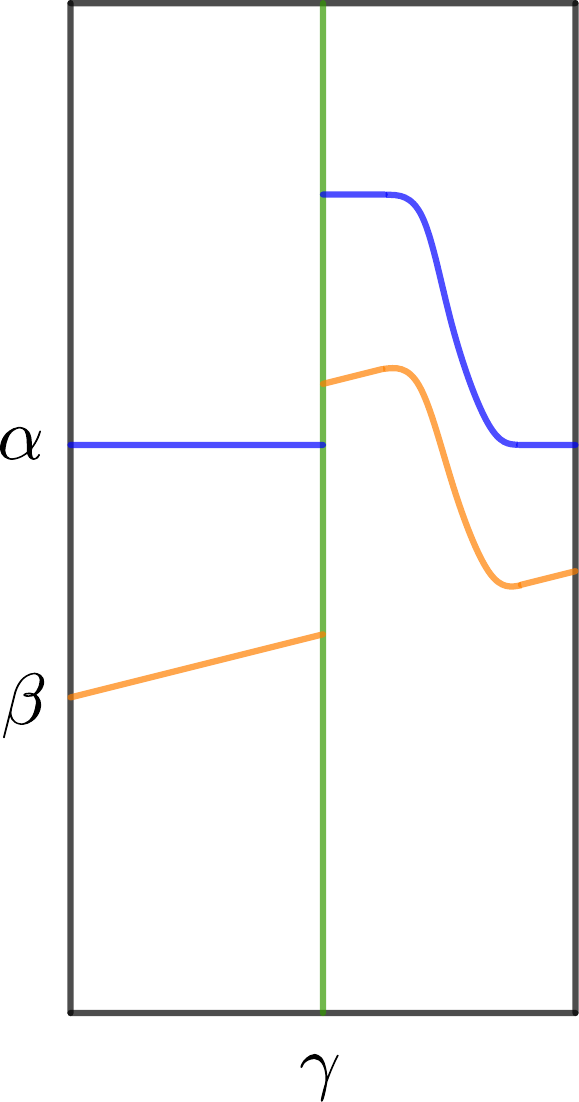}
         \caption{$t_\gamma > 0$.}
     \end{subfigure}
      \caption{Zero twist versus positive twist.}
      \label{Fig. 3}
\end{figure} 

The following theorem due to Fenchel and Nielsen \cite{Fenchel-Nielsen} provides a parametrization of $\Teich(S_g)$ in terms of the Fenchel-Nielsen coordinates obtained via the length and twist parameters of curves in a pants decomposition of $S_g$.  
\begin{theorem}[\textbf{Fenchel-Nielsen}]\label{Thm:F-N}
Let $\mathcal{P}=\{\gamma_1, \gamma_2,\dots, \gamma_{3g-3}\}$ be a pants decomposition of $S_g$. Suppose $\ell_{\gamma_i}$ and $t_{\gamma_i}$ denotes the length and twist parameters of $\gamma_i$. Then 
 $$\Teich(S_g) \cong \mathbb{R}_+^{3g-3} \times \mathbb{R}^{3g-3} \cong \mathbb{R}^{6g-6}$$
  via the map 
  $$\chi=[\xi] \longmapsto (\ell_{\gamma_1}(\xi), \dots, \ell_{\gamma_{3g-3}}(\xi), t_{\gamma_1}(\xi), \dots, t_{\gamma_{3g-3}}(\xi)).$$
\end{theorem}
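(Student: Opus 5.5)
The plan is to follow the classical cut-and-paste argument (as in \cite[Chapter 10]{primer}), treating the map $\chi$ in three stages: well-definedness, bijectivity, and the homeomorphism property. First I check that $\chi$ is well defined on $\Teich(S_g)$. Every essential simple closed curve $\gamma_i$ has a unique $\xi$-geodesic representative, so each length $\ell_{\gamma_i}(\xi)$ is defined. Pulling back $\xi$ by some $\phi\in\Diff_0(S_g)$ carries geodesics to geodesics in the same isotopy classes. It also carries the common perpendiculars between the boundary geodesics of each pair of pants to the corresponding common perpendiculars. Hence both lengths and twists are invariant under the $\Diff_0(S_g)$-action. The twist is a genuine real number rather than an angle, which requires fixing the isotopy classes of the seams curves $\mathcal{S}$ once and for all. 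The twist is then read off along a lift of the seam to the universal cover, so it records full turns as well.

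The key geometric input is the \emph{pants lemma}: for any $(a,b,c)\in\mathbb{R}_+^3$ there is a unique hyperbolic pair of pants, up to isometry, with geodesic boundary lengths $a,b,c$. I would prove it by cutting the pants along the three common perpendiculars (seams) into two right-angled hexagons. Alternate sides of each hexagon have lengths $a/2,b/2,c/2$. A right-angled hexagon is determined by three alternate side lengths, via the hyperbolic hexagon cosine law
\[
\cosh c' = \frac{\cosh\frac a2\cosh\frac b2+\cosh\frac c2}{\sinh\frac a2\sinh\frac b2}.
\]
This law also gives existence for all positive triples, since the right-hand side exceeds $1$.

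\emph{Injectivity:} suppose $[\xi]$ and $[\xi']$ have the same coordinates. Cutting along the geodesic pants curves gives pants that are pairwise isometric by the pants lemma. Equal twists mean that the isometries can be chosen to match along each $\gamma_i$, including the marking, since the twist fixes the gluing position modulo nothing. These isometries assemble into an isometry $(S_g,\xi)\to(S_g,\xi')$ that maps each $\gamma_i$ and each seam to curves in the same isotopy classes. Such an isometry preserves the isotopy classes of the full pants-plus-seams system, which fills $S_g$. By the Alexander method it is therefore isotopic to the identity, so $[\xi]=[\xi']$.

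\emph{Surjectivity:} given lengths and twists, I build the pants from the pants lemma and glue them along boundary curves of equal length, rotating each gluing by the prescribed amount $t_i$. The marking is obtained by a homeomorphism to $S_g$ sending curves to $\gamma_i$ and arcs to seams, with Dehn twists $T_{\gamma_i}$ accounting for the integer parts of $t_i/\ell_i$.

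Finally, for the homeomorphism property I would use the description $\Teich(S_g)=\DF(\pi_1(S_g),\PSL_2(\mathbb{R}))/\PGL_2(\mathbb{R})$. Lengths are continuous because $\ell_\gamma=2\operatorname{arcosh}(|\mathrm{tr}\,\rho(\gamma)|/2)$. Twists are continuous because the endpoints of the common perpendiculars depend continuously on the fixed points of the relevant hyperbolic elements. Conversely, the gluing construction produces generators of $\rho(\pi_1 S_g)$ as explicit products of matrices, each depending continuously on $(\ell_i,t_i)$, so $\chi^{-1}$ is continuous. Alternatively, invariance of domain gives this direction, since $\chi$ is a continuous bijection between $(6g-6)$-manifolds.

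I expect the main obstacle to be the injectivity step, specifically the marking bookkeeping. One must show that twists differing by full multiples of $\ell_i$ give distinct points of Teichm\"uller space, and that equal coordinates force the glued isometry to be isotopic to the identity. I would handle this by tracking the seams carefully and invoking the Alexander method for the filling system $\mathcal{P}\cup\mathcal{S}$.
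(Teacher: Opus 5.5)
The paper gives no proof of this statement. It quotes the classical theorem from \cite{Fenchel-Nielsen} and refers to \cite{primer} for seams and twists. Your outline is the standard cut-and-paste proof of \cite[Chapter 10]{primer}, so that is the right benchmark. Your insistence that twists be real numbers, read off along a lift of a fixed seam class, is necessary. The paper's verbal description (a signed distance along the closed geodesic $\gamma_i'$) only determines $t_{\gamma_i}$ modulo $\ell_{\gamma_i}$. With that reading the map would not be injective, since $T_{\gamma_i}$ changes $t_i$ by $\ell_{\gamma_i}$.

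The step that fails as you state it is the end of the injectivity argument. Preserving the isotopy class of every curve in a filling system does not force a homeomorphism to be isotopic to the identity. Under that hypothesis the Alexander method only gives that $\Phi$ has finite order in $\Mod(S_g)$. Concretely, the hyperelliptic involution of $S_2$ fixes the isotopy class of every simple closed curve, in particular of $\gamma_1,\gamma_2,\gamma_3,c_1,c_2,c_3$. Yet it is not isotopic to the identity: it interchanges the two pairs of pants of $\mathcal{P}_2$. What the Alexander method actually requires is that the automorphism $\Phi_*$ of the graph formed by the pants curves and seams fixes every vertex and every edge, with orientation.

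For your $\Phi$ this does hold, but only because of how $\Phi$ is built, so you must make that construction explicit.
\begin{itemize}
\item On each pair of pants, take the unique orientation-preserving isometry $P_j\to P_j'$ that respects the labelling of boundary components. It exists by the pants lemma, composing with the hexagon-swapping reflection if necessary. It is unique because such a map must fix each seam arc pointwise.
\item This isometry carries each seam arc, with its endpoints, to the corresponding seam arc, and it does not exchange the two hexagons.
\item Equality of twists modulo $\ell_{\gamma_i}$ makes adjacent pants isometries agree along $\gamma_i$.
\item Equality of the real-valued twists is what guarantees that the glued map sends each seam class to itself, rather than to its image under a power of $T_{\gamma_i}$.
\end{itemize}
With this, $\Phi_*$ is trivial on vertices and oriented edges, and the Alexander method gives $\Phi\simeq\mathrm{id}$. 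Your phrase ``the isometries can be chosen to match \dots\ including the marking'' should be replaced by exactly this.

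Two smaller points. First, the hexagon cosine law only computes what the sixth side must be. Existence of a right-angled hexagon with arbitrary prescribed alternate sides still needs a construction: take a segment of the computed length, erect perpendiculars of lengths $a/2$ and $b/2$ at its ends, and check that the resulting geodesics are ultraparallel at distance $c/2$. Second, the invariance-of-domain alternative presupposes that $\Teich(S_g)$ is already known to be a $(6g-6)$-manifold, which is usually deduced from these very coordinates. Keep your primary argument for $\chi^{-1}$, via continuous dependence of the glued representation on $(\ell_i,t_i)$.
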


    \begin{figure}[h]
    \centering
    \includegraphics[width=0.8\textwidth]{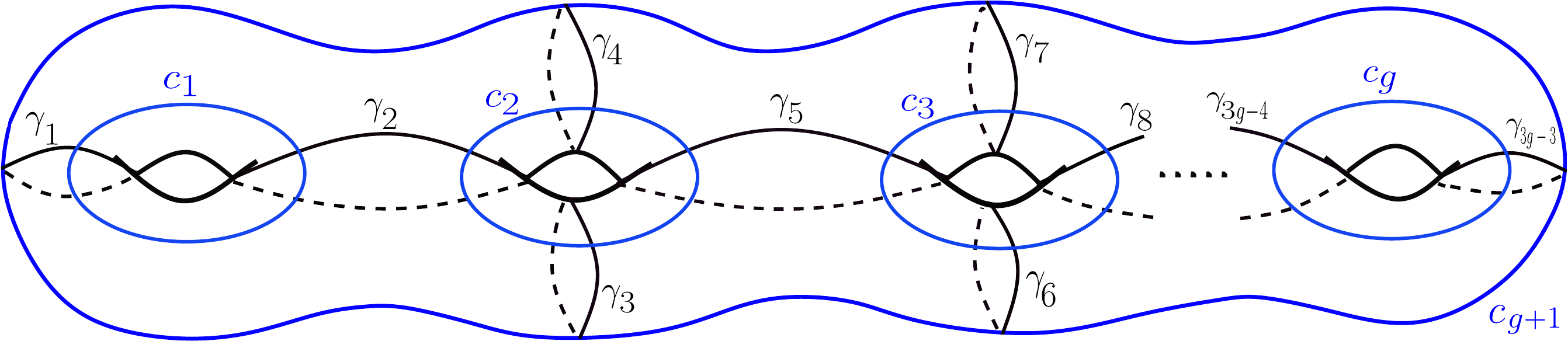}
    \caption{Pants and seams curves on $S_g$.}
    \label{fig. pants and seams curves}
\end{figure}
    
\subsection{Finite order mapping classes of $S_g$ as chain maps} \label{sub-sec:Chain}
\begin{defn}\label{chain}
Let $\Gamma=\{\alpha_1, \alpha_2, \dots, \alpha_k\}$ be a collection of essential simple closed curves on $S_g$. Then $\Gamma$ is said to be
\begin{enumerate}[(i)]
\item \textit{filling} if the surface obtained from $S_g$ after cutting along all the curves in $\Gamma$ is a disjoint union of topological 2-disks.
\item a collection of \textit{chain curves} if $i(\alpha_i, \alpha_{i+1})=1$ for all $i$ and $i(\alpha_i, \alpha_j)=0$ for all $|i-j|>1$.
\item a \textit{filling chain collection} if it is a collection of chain curves which is filling.
\end{enumerate} 
\end{defn}
\begin{figure}[h]
\centering
\includegraphics[width=0.7\textwidth]{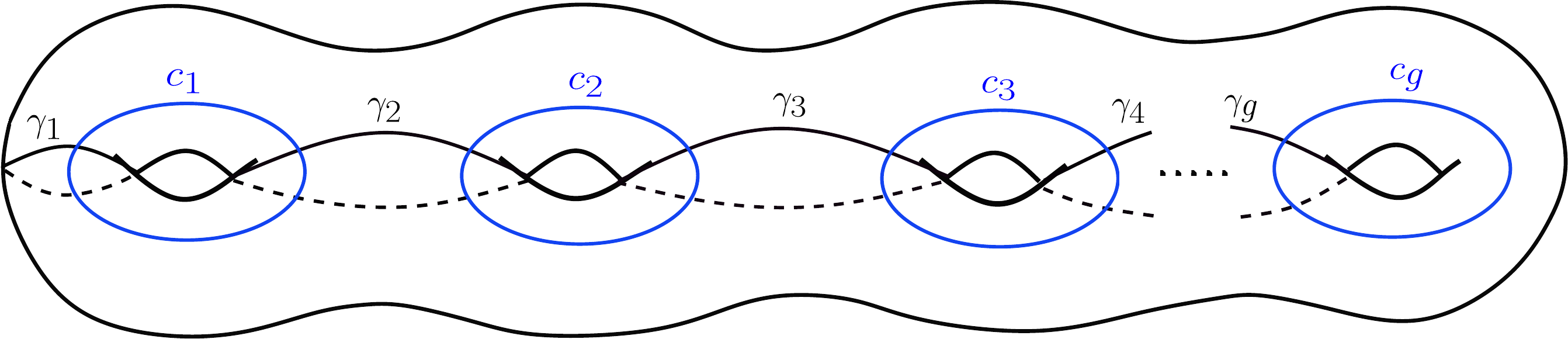}
\caption{A filling chain collection on $S_g$.}\label{Fig. 12}
\end{figure}

Let $\Gamma=\{\gamma_1, c_1, \dots, \gamma_g, c_g\}$ be the collection of the filling chain curves in $S_g$ ($g\geq 2$) as shown in Figure \ref{Fig. 12} and let $F:= T_{\gamma_1}T_{c_1} \cdot \cdot \cdot T_{\gamma_g}T_{c_g}$. Such maps are called chain maps. Then $F$ is unique upto conjugacy and it turns out that it is the irreducible Type 1 action of order $4g+2$ which represents the periodic mapping class in $\Mod(S_g)$ of the highest order. In the subsequent sections, we will focus on describing the isometric action on $\Teich(S_g)$ induced by $F$ and the Fenchel-Nielsen coordinates of its unique fixed point.
\section{Computing the induced action via compositions of Dehn twists}\label{Sec:Method1}
Following the notations introduced in Section \ref{sub-sec:twist}, $\mathcal{P}=\{\gamma_1, \gamma_2,\dots, \gamma_{3g-3}\}$, and $\mathcal{S}=\{c_1, c_2,\dots, c_{g+1}\}$ will denote the pants and the seams curves on $S_g$ ($g\geq 2$) respectively, as shown in Figure \ref{fig. pants and seams curves}. Let $\mathcal{H}=\{\gamma_1, c_1, \gamma_2, c_2, \gamma_5, c_3, \dots, \gamma_{3g-4} ,c_g, \gamma_3\}$ be the collection of $2g+1$ Humphries curves as mentioned in Theorem \ref{thm: Humphries} (see Figure \ref{fig. Humphries curves}). In this section we will compute the induced actions of Dehn twists about these Humphries curves which will lead to computing the induced actions of certain Type 1 irreducible mapping classes on $\Teich(S_g)$ as mentioned in Section \ref{sub-sec:Chain}. Note that given any diffeomorphism $f: S_g \longrightarrow S_g$ with $F=[f]\in \Mod(S_g)$ and $\chi=[\xi]\in \Teich(S_g)$ where $\xi$ is a hyperbolic metric on $S_g$, $f: (S_g, f^*(\xi)) \longrightarrow (S_g, \xi)$ becomes an isometry. We denote, $\gamma_i:=\text{length}_{\gamma_i}(\chi)=\ell_{\gamma_i}(\chi)$ and $t_i:=\text{twist}_{\gamma_i}(\chi)=t_{\gamma_i}(\chi)$. By denoting  $\Tilde{\xi}:=f^*(\xi)$, $\Tilde{\chi}:=F*\chi=[\Tilde{\xi}]$, and $\Tilde{\gamma}_i:= \text{length}_{\gamma_i}(\Tilde{\chi}) = \ell_{\gamma_i}(\Tilde{\chi})$, $\Tilde{t}_i:=\text{twist}_{\gamma_i}(\Tilde{\chi})=t_{\gamma_i}(\Tilde{\chi})$. With this notation, the action induced by $F$ on $\Teich(S_g)$ is given by 
$$F: \chi =(\gamma_1, \dots, \gamma_{3g-3}, t_1, \dots, t_{3g-3}) \longmapsto \Tilde{\chi}=(\Tilde{\gamma}_1, \dots, \Tilde{\gamma}_{3g-3}, \Tilde{t}_1, \dots, \Tilde{t}_{3g-3}),$$
where the length ($\Tilde{\gamma}_i$) and twist parameters ($\Tilde{t}_i$) can be computed by: $\Tilde{\gamma_i}=\ell_{[\gamma_i]}(\Tilde{\xi})=\ell_{\gamma_i}(f^*(\xi))=\ell_{[f(\gamma_i)]}(\xi)$, and  $\Tilde{t}_i=t_{[\gamma_i]}(\Tilde{\xi})=t_{[f(\gamma_i)]}(\xi)$. 
\subsection{Induced Action of the Dehn Twists about the Humphries generators}
 We begin by computing the induced action of Dehn twists about the pants curves in $\mathcal{H}$ and obtain the following result  
 \begin{theorem}\label{thm.4.1} 
The induced action of $T_{\gamma_i}: \Teich(S_g) \longrightarrow \Teich(S_g)$ is given by
$$(\gamma_1, \dots, \gamma_{3g-3}, t_1, \dots, t_i, \dots, t_{3g-3}) \mapsto (\gamma_1, \dots, \gamma_{3g-3}, t_1, \dots, t_i-\gamma_i, \dots, t_{3g-3}),$$
where $\gamma_i$ is a pants curve in $\mathcal{H}$.
      
\begin{proof}
Let $\chi=[\xi]=(\gamma_1, \dots, \gamma_{3g-3}, t_1, \dots, t_i, \dots, t_{3g-3})\in \Teich(S_g)$, and let $\Tilde{\chi}=T_{\gamma_i}*\chi=[\Tilde{\xi}]$, where $\Tilde{\xi}=T_{\gamma_i}^*(\xi)$ i.e. $T_{\gamma_i}: (S_g, \Tilde{\xi}) \longmapsto (S_g, \xi)$ is an isometry. Using properties of a Dehn twist, it follows that 
$$\Tilde{\gamma}_j=\ell_{[T_{\gamma_i}(\gamma_j)]}(\xi)=\gamma_j, \ \forall \ \gamma_j\in \mathcal{P} \ \text{ and } \ \Tilde{t}_i=t_{[T_{c_1}(\gamma_i)]}(\xi)=t_i-\gamma_i.$$ Similarly, $\Tilde{t}_j=t_j$ for all $j\neq i$, concluding the proof.
\end{proof}
\end{theorem}
Next,we compute the induced actions of Dehn twists about the curves in $\mathcal{H}$ that are not pants curves, but seams curves. For such curves $c_i$, two possibilities arise where $c_i$ intersects either two or four pants curves from $\mathcal{P}$. It turns out that $c_1$ and $c_g$ each intersect exactly two pants curves and all the other $c_i$'s intersect exactly four pants curves each. The first case is addressed in the following theorem.
\begin{theorem}\label{thm: Induced Tc1}
 The induced action of $T_{c_1}: \Teich(S_g) \longrightarrow \Teich(S_g)$ is given by
 $$(\gamma_1, \dots, \gamma_{3g-3}, t_1, \dots, t_{3g-3}) \mapsto (\Tilde{\gamma}_1, \Tilde{\gamma}_2, \gamma_3, \dots, \gamma_{3g-3}, \Tilde{t}_1, \Tilde{t}_2, \Tilde{t}_3, \Tilde{t}_4, t_5, \dots, t_{3g-3}),$$
 where $\Tilde{\gamma}_i$ and $\Tilde{t}_i$ are given by
\small{\begin{equation*}
\cosh{\frac{\Tilde{\gamma}_i}{2}}=|\cosh(\ln{\sqrt{r_i}})\cosh{\frac{\gamma_i-t_i-b_i}{2}} + \frac{a_i}{2\sqrt{r_i}}\sinh{\frac{\gamma_i-t_i-b_i}{2}}|, \ \text{ and }
\ \sinh{\Tilde{t}_i}=\frac{\cosh{\Tilde{\phi_i}}}{\sinh{\Tilde{L}_i}}. 
\end{equation*}}
where $a_i, b_i, r_i, \Tilde{\phi_i}$, and $\Tilde{L_i}$ are functions of known parameters described in the proof. A similar description can be provided for the induced action of $T_{c_g}$ on $\Teich(S_g)$.
\end{theorem}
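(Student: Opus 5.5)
We follow the recipe set up at the beginning of Section \ref{Sec:Method1}: for $\Tilde{\chi}=T_{c_1}*\chi=[\Tilde{\xi}]$ we have $\Tilde{\gamma}_j=\ell_{[T_{c_1}(\gamma_j)]}(\xi)$ and $\Tilde{t}_j=t_{[T_{c_1}(\gamma_j)]}(\xi)$.

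\textbf{Which coordinates change.} Since $c_1$ meets only $\gamma_1$ and $\gamma_2$, every pants curve $\gamma_j$ with $j\neq 1,2$ is disjoint from $c_1$. Hence $T_{c_1}(\gamma_j)=\gamma_j$, and the lengths $\gamma_j$ for $j\ge 3$ are unchanged. The twist parameters also depend on the seams adjacent to each pants curve. Only the twists of curves bounding the pair(s) of pants that $c_1$ passes through can change, namely $t_1,\dots,t_4$ in the labelling of Figure \ref{fig. pants and seams curves}. The remaining $t_j$ are unchanged, for the same reason as in the proof of Theorem \ref{thm.4.1}.

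\textbf{Computing $\Tilde{\gamma}_i$ for $i=1,2$.} The curve $T_{c_1}(\gamma_i)$ is obtained by surgery of $\gamma_i$ with $c_1$. We realize it in the universal cover as follows.

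1. Cut $(S_g,\xi)$ along $\gamma_i$. Lift the two adjacent pants, whose boundary lengths and geometry are known from the Fenchel--Nielsen coordinates, to $\mathbb{H}^2$.
2. Glue the lifts with the shift $t_i$ along the lift of $\gamma_i$.
3. The deck transformation $A$ of $T_{c_1}(\gamma_i)$ is the product of two hyperbolic elements:
   \begin{itemize}
   \item the translation along the lift of $\gamma_i$, of length $\gamma_i$;
   \item the translation corresponding to $c_1$, whose axis crosses the $\gamma_i$-axis at a point shifted by $t_i$ plus a seam offset $b_i$.
   \end{itemize}
4. Normalize the $\gamma_i$-axis to the imaginary axis and write both translations as explicit matrices in $\PSL_2(\mathbb{R})$. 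Here $r_i$ is the dilation factor of the conjugated second factor, and $a_i$ is the off-diagonal entry coming from the perpendicular distance between the axes. Both come from right-angled hexagon formulas in the two pants.
5. Conclude with $2\cosh(\Tilde{\gamma}_i/2)=|\operatorname{tr}A|$.

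A direct trace computation then gives
$$\cosh\tfrac{\Tilde{\gamma}_i}{2}=\Big|\cosh(\ln\sqrt{r_i})\cosh\tfrac{\gamma_i-t_i-b_i}{2}+\tfrac{a_i}{2\sqrt{r_i}}\sinh\tfrac{\gamma_i-t_i-b_i}{2}\Big|.$$
This replaces the length-minimization of Step 3 of Algorithm \ref{algo:1}. For a closed geodesic the minimization is equivalent to the trace formula, and we would cite it as such.

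\textbf{Twists.} We would work in $(S_g,\Tilde{\xi})$, cut along the new geodesics $\Tilde{\gamma}_i$ and their neighbours, and lift.

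1. Locate the common perpendiculars from $\Tilde{\gamma}_i$ to the adjacent boundary geodesics along the image seams $T_{c_1}(c_j)$.
2. Let $\Tilde{L}_i$ be the distance between the relevant axes and $\Tilde{\phi}_i$ the complementary length of the resulting right-angled quadrilateral (Lambert or trirectangle configuration). Both are known from the computed $\Tilde{\gamma}_i$ and the old data.
3. The hyperbolic trigonometric identity for such a quadrilateral gives $\sinh\Tilde{t}_i=\cosh\Tilde{\phi}_i/\sinh\Tilde{L}_i$.

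The twists $\Tilde{t}_3,\Tilde{t}_4$ change because the feet of the seams on $\gamma_3,\gamma_4$ move. They are obtained the same way from the new hexagon decomposition.

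The case of $T_{c_g}$ is symmetric under the reflection of Figure \ref{fig. pants and seams curves}.

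\textbf{Main obstacle.} The hardest part is the twist computation. One must track the sign conventions of Section \ref{sub-sec:twist} and correctly identify which perpendiculars define the twist after the surgery. I would first fix an explicit normalization in $\mathbb{H}^2$ with all four relevant axes written down. I would then read off signed distances directly rather than rely on symmetry.
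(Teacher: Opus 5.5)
Your treatment of the lengths is sound, and it takes a genuinely different route from the paper's. The paper places two lifts of $\gamma_1$ at the unit circle and at $(x-a_1)^2+y^2=r_1^2$. It then parametrizes segments $\Gamma_l=[A_l,B_l]$ between corresponding points, minimizes $\cosh\Gamma_l$ by calculus, and argues via equal incidence angles that the minimizer closes up smoothly. In that normalization the deck transformation of $T_{c_1}(\gamma_1)$ is $g(z)=a_1+r_1\,\tau_d(z)$, where $\tau_d$ is the translation along the unit circle by $d=\gamma_1-t_1-b_1$. The quantity $\frac12|\operatorname{tr} g|$ is exactly the stated expression, so your trace formula recovers it with no minimization.

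Two points need correcting, however. First, $a_i,r_i,b_i$ do not come from hexagon formulas alone. The hexagons give only the perpendiculars $c_1^1,c_1^2$. The position of the second lift of $\gamma_1$ also depends on the twist $t_2$ along the other curve that $c_1$ crosses: the paper transports through a M\"obius translation along $\gamma_2$, and its $a_1=\frac{e^{c_1^1}\sinh t_2}{\coth c_1^2-\cosh t_2}$ vanishes only for $t_2=0$. Symmetrically, $a_2,r_2,b_2$ depend on $t_1$. Second, since $i(\gamma_i,c_1)=1$, the axes of your two factors cross, so $a_i$ cannot come from a ``perpendicular distance between the axes''. What $a_i,r_i$ record is where the second lift of $\gamma_i$ sits relative to the first.

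The genuine gap is the twist computation. Saying that $\tilde\phi_i$ is ``known from the computed $\tilde\gamma_i$ and the old data'' is precisely what must be established. Your reading of $\tilde\phi_i$ as a side length of a right-angled quadrilateral also cannot be right. In a trirectangle the identity of this shape is $\cos\varphi=\sinh a\sinh b$ with $\varphi$ the acute \emph{angle} \cite[Theorem 2.3.1]{PBuserBook}; a lengths-only relation $\cosh c=\sinh a\sinh b$ is the right-angled pentagon identity. Moreover, with $\cosh$ of a real length the right-hand side would be at least $1/\sinh\tilde L_i>0$. It could then never return the zero or negative twists that must occur, since $(T_{c_1})_{\#}$ is onto $\Teich(S_g)$. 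In the paper's proof $\tilde\phi_2$ is an angle and the relation used is $\sinh\tilde t_2=\cos\tilde\phi_2/\sinh\tilde L_2$, so the $\cosh$ in the statement is a misprint.

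The missing idea is how to obtain that angle. The paper proceeds in four steps:
\begin{enumerate}
\item It takes the angle $A$ at which $\tilde\gamma_1$ crosses $\gamma_1$, read off from the incidence angles of the minimizing circle.
\item A right triangle $ABC$ with leg $\tilde\gamma_1/2$ and angle $A$ gives $\cos C=\sin A\cosh\frac{\tilde\gamma_1}{2}$ and $\sinh d_1=\sinh\frac{\tilde\gamma_1}{2}/\sin C$.
\item A second triangle $CDE$, with angle $A$ at $D$ and side $\gamma_1-d_1$, gives $\cos\tilde\phi_2=\sin A\sin C\cosh(\gamma_1-d_1)-\cos A\cos C$.
\item $\tilde L_2$ comes from the hexagon formula for the new pants bounded by $\tilde\gamma_1,\tilde\gamma_2,\gamma_4$.
\end{enumerate}
This is also where your trace shortcut costs you something: the trace yields only the translation length, not the axis. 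To run this step you would still have to locate the axis of $g$ (from its fixed points on $\mathbb{R}$) and compute its angle with the unit circle.
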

\begin{figure}[h]
     \centering
        \includegraphics[width=0.72\textwidth]{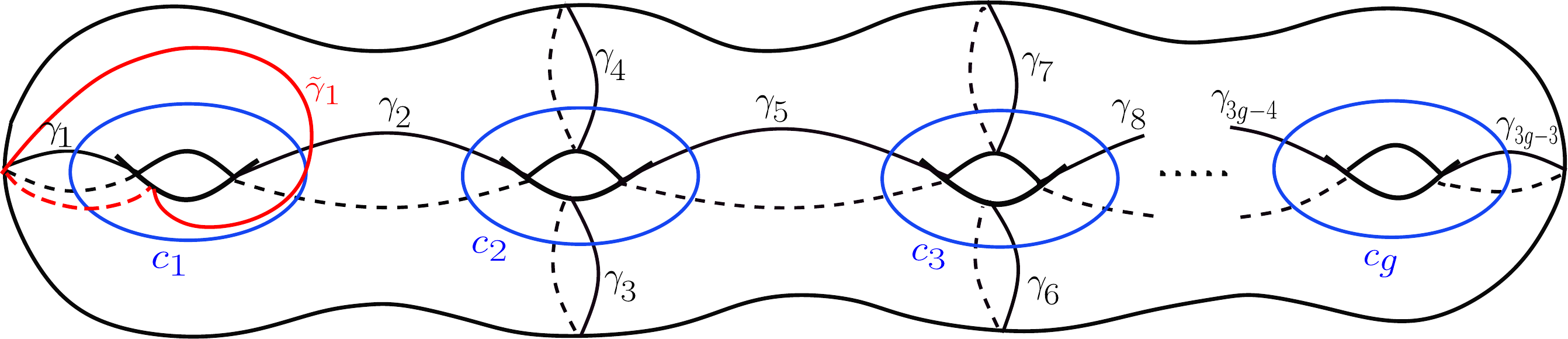}
         \caption{Action of $T_{c_1}$ on $\gamma_1$ on $(S_g, \xi)$.}
         \label{fig: action of Dehn twist on surface 1}
\end{figure}
\begin{proof} We only discuss the case of $T_{c_1}$ here. The case of $T_{c_g}$ is similar. 
   
\textbf{Length Parameters:} It follows that under the action of $T_{c_1}$, we have $\Tilde{\gamma_i}=\gamma_i$, for all $i \neq 1, 2$. To compute $\Tilde{\gamma_1}, \Tilde{\gamma_2}$, we assume that $t_1, t_2 \geq 0$, and complete the proof. For other possible values of $t_1$ and $t_2$, the proof is similar.
\begin{figure}[h]
         \centering
         \includegraphics[width=0.7\textwidth]{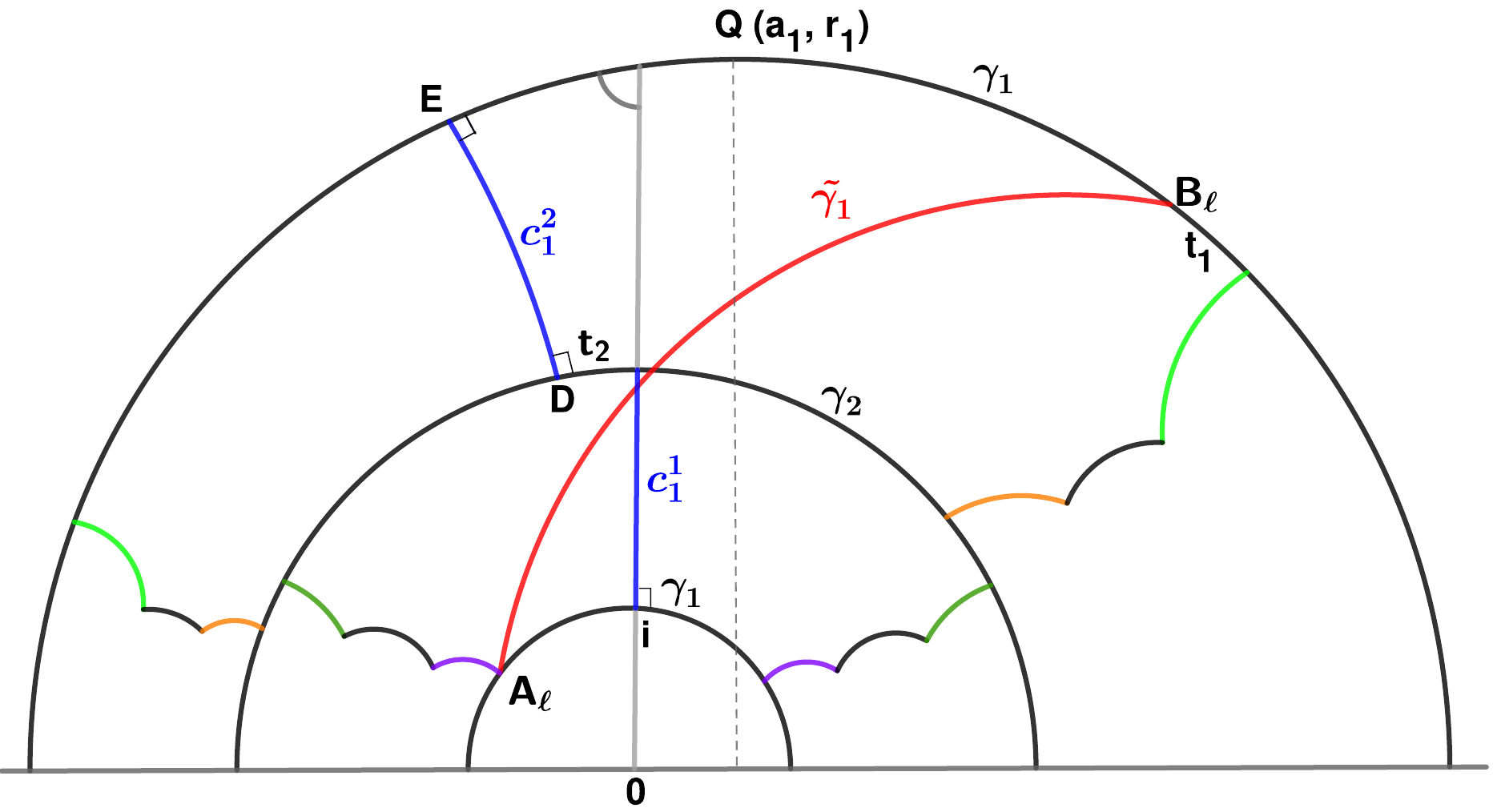}
         \caption{Lifting of $\Tilde{\gamma}_1=T_{c_1}(\gamma_1)$ onto the universal cover of $(S_g, m)$.}
         \label{fig: lifting to universal cover 1}
\end{figure}
         
Consider the universal cover (see Figure \ref{fig: lifting to universal cover 1}) of the hyperbolic surface $(S_g, \xi)$ (see Figure \ref{fig: action of Dehn twist on surface 1}). Among the two circles representing $\gamma_1$, one is the unit circle and the other is the circle centred at $(a_1, 0)$ with radius $r_1$. For $t_2 > 0$, the two common perpendiculars $c_1^1$ and $c_1^2$ between $\gamma_1$ and $\gamma_2$ along $c_1$ lie on the left of the other (following the sign convention of twist parameters). The common perpendicular $c_1^2$ is a circle when $t_2>0$ and a line segment on the imaginary axis when $t_2=0$. The geodesic $\gamma_2$ is represented by a circle centred at the origin. For $l\in [-\frac{\gamma_1}{2}, \frac{\gamma_1}{2}]$, consider the one-parameter family of geodesic segments $\Gamma_l:=[A_l, B_l]$ each of which is projected on $(S_g, \xi)$ to a simple closed curve homotopic to $ \Tilde{\gamma}_1:=T_{c_1}(\gamma_1)$. Throughout the proof the same notation will be used to denote a simple closed curve, and the length of its geodesic representative  in $(S_g, \xi)$. As $A_l$ lies on the unit circle and $l=d_{\mathbb{H}^2}(A_l, i)$, it follows that $A_l=(-\tanh{l}, \operatorname{sech}{l})$. Similarly, $B_l=(a_1+r_1\tanh{l_1}, r_1\operatorname{sech}{l_1})$ where $l_1=-l-t_1-b_1+\gamma_1$, and $b_1=d_{\mathbb{H}^2}(E, Q)$. A simple hyperbolic distance formula in $\mathbb{H}^2$ (see \cite[Theorem 1.2.6]{Katok})leads to 
\begin{eqnarray*}\label{eq.1}
 && \cosh{\Gamma_l}  = 1 + \frac{|A_l - B_l|^2}{2\operatorname{Im}(A_l)\operatorname{Im}(B_l)} \\
 && = \frac{a_1^2+r_1^2+1+2a_1\tanh{l} +2r_1\tanh{l_1}(a_1+\tanh{l})}{2r_1\operatorname{sech}{l}\operatorname{sech}{l_1}}\\
\nonumber && = \left( \frac{a_1^2+(r_1+1)^2}{4r_1}\cosh{d} + \frac{a_1}{2r_1}(1+r_1)\sinh{d} \right)\\
&& + \left( \frac{a_1^2+(r_1 - 1)^2}{4r_1}\cosh(2l-d) + \frac{a_1}{2r_1}(1-r_1)\sinh(2l-d) \right) \ (\text{putting }  \; d= \gamma_1-t_1-b_1)\\
\nonumber && = k + f(l),
\end{eqnarray*}
where $f(l) := \frac{a_1^2+(r_1 - 1)^2}{4r_1}\cosh(2l-d) + \frac{a_1}{2r_1}(1-r_1)\sinh(2l-d)$ and\\ $k := \frac{a_1^2+(r_1+1)^2}{4r_1} \cosh{d} + \frac{a_1}{2r_1}(1+r_1)\sinh{d}$ is a constant independent of $l$.

Differentiating the above equation with respect to $l$ and equating it to zero, we obtain
\begin{eqnarray}\label{eq.2}
\tanh(2l-d) &=& \frac{2a_1(r_1-1)}{a_1^2+(r_1-1)^2}= y \;(\text{say})\\
\text{ i.e. } \ l  &=& \frac{\gamma_1-t_1-b_1}{2} + \frac{1}{2}\ln{\frac{r_1+a_1-1}{r_1-a_1-1}} = l_0 \;(\text{say})
\end{eqnarray}
(as $r_1>1$ and $r_1-a_1>1$). To show that at $l=l_0$, $\cosh{\Gamma_{l}}$ i.e. $\Gamma_l$ attains its minimum value, first note that at $l=l_0$, the incidence angles of $\Gamma_l$ with $\gamma_1$ are the same, implying that $\Gamma_{l_0}$ projects to a smooth simple closed curve on $S_g$ i.e. $\Gamma_{l_0}$ is the desired geodesic segment. Let $\Gamma_{l_0}=[A_{l_0}, B_{l_0}]$ be represented by the circle of radius $r$, centred at $(a, 0)$, and let the incidence angles at $A_{l_0}$, and $B_{l_0}$ be $\theta$ and $\theta'$ respectively. Then 
\begin{equation}
\label{eq.4}
cos{\theta}=\frac{1+r^2-a^2}{2r}, \;\; \text{and}\;\; \cos{\theta'}=\frac{r^2+r_1^2-(a-a_1)^2}{2rr_1}.
\end{equation}
As $A_{l_0}$ and $B_{l_0}$ lie on the circle $(x-a)^2+y^2=r^2$, we have
\begin{equation}
\label{eq.5}
\cos{\theta} = \frac{2(a_1+\tanh{l_0}+r_1\tanh{l_1})+\tanh{l_0}(a_1^2+r_1^2-1+2a_1r_1\tanh{l_1})}{2r(a_1+\tanh{l_0}+r_1\tanh{l_1})} \ \text{and}
\end{equation} 
\begin{equation}
\label{eq.6}
\cos{\theta'} = \frac{(1+r_1^2-a_1^2)\tanh{l_1} + 2(a_1+\tanh{l_0})(r_1+a_1\tanh{l_1})}{2r(a_1+\tanh{l_0}+r_1\tanh{l_1})}. 
\end{equation}
Using equation \ref{eq.2} and $l_1=l-(2l-d))$, we have
\begin{equation*}
\tanh{l_1} = \tanh(l-(2l-d)) = \frac{\tanh{l}-y}{1-y\tanh{l}}.
\end{equation*}

\begin{equation}\label{eq.8}
    \cosh{\frac{\Tilde{\gamma}_1}{2}}=|\cosh(\ln{\sqrt{r_1}})\cosh{\frac{\gamma_1-t_1-b_1}{2}} + \frac{a_1}{2\sqrt{r_1}}\sinh{\frac{\gamma_1-t_1-b_1}{2}}|.
   \end{equation}
   
To determine $a_1, r_1$, and $b_1$, we note that $b_1=d_{\mathbb{H}^2}(E, Q)$, where $Q=(a_1, r_1)$. Let $E=(x_1, y_1)$. 
On the other hand, lengths of $c_1^1$ and $c_1^2$ (see Figure \ref{fig: lifting to universal cover 1}), are as follows (Ref. \cite[Theorem 2.4.1]{PBuserBook})
\begin{equation}\label{formula for c_1^1 and c_1^2}
\cosh{c_1^1}=\frac{\cosh{\frac{\gamma_3}{2}} + \cosh{\frac{\gamma_1}{2}}\cosh{\frac{\gamma_2}{2}}}{\sinh{\frac{\gamma_1}{2}}\sinh{\frac{\gamma_2}{2}}}, \hspace{1 cm} \cosh{c_1^2}=\frac{\cosh{\frac{\gamma_4}{2}} + \cosh{\frac{\gamma_1}{2}}\cosh{\frac{\gamma_2}{2}}}{\sinh{\frac{\gamma_1}{2}}\sinh{\frac{\gamma_2}{2}}}.
\end{equation}
Assuming $t_2 > 0$, the circle $(x-a_2)^2 + y^2 = r_2^2$ representing the common perpendicular $c_1^2$, is given by $a_2 = -e^{c_1^1}\coth{t_2}$, and $r_2 = e^{c_1^1}|\operatorname{csch}{t_2}|$. It intersects the circle $x^2+y^2=e^{2c_1^1}$, representing $\gamma_2$, perpendicularly at $D=(-e^{c_1^1}\coth{t_2}, e^{c_1^1}\operatorname{sech}{t_2})$. Then the hyperbolic M\"obius transformation along $\gamma_2$ with translation length $t_2$ given by
\begin{equation}
\label{eq.10}
T(z) = \frac{z\cosh{\frac{t_2}{2}} + e^{c_1^1}\sinh{\frac{t_2}{2}}}{ze^{-c_1^1}\sinh{\frac{t_2}{2}} + \cosh{\frac{t_2}{2}}}
\end{equation}
maps $c_1^2$ onto the imaginary axis by sending the triple $(a_2-r_2, D, a_2+r_2)$ to $(\infty, ie^{c_1^1}, 0)$ i.e. $E=(x_1, y_1)=T^{-1}(ie^{c_1^1 + c_1^2})$, giving 
\begin{equation}
\label{eq.11}
x_1=-\frac{e^{c_1^1}\sinh{t_2}}{\cosh{t_2}-\tanh{c_1^2}}, \hspace{2cm} y_1=\frac{e^{c_1^1}\operatorname{sech}{c_1^2}}{\cosh{t_2}-\tanh{c_1^2}}.
\end{equation}
Since the circle $(x-a_1)^2 + y^2 = r_1^2$ intersects $c_1^2$ and passes through $E$, it can be determined as follows 
\begin{equation}\label{eq.12}
a_1=\frac{e^{c_1^1}\sinh{t_2}}{\coth{c_1^2}-\cosh{t_2}}, \hspace{2cm} r_1=\frac{e^{c_1^1}\operatorname{csch}{c_1^2}}{\coth{c_1^2}-\cosh{t_2}}.
\end{equation}   
where it is easy to see that both $a_1$ and $r_1$ are well defined.

It follows that for $t_2>0$ (using \cite[Theorem 1.2.6 (iii)]{Katok}),  $b_1=2\operatorname{arsinh}{\sqrt{\frac{1 + \sinh^2{t_2}\sinh(2c_1^2)}{2}}}$, or equivalently, $b_1=\operatorname{arcosh}(2 + \sinh^2{t_2}\sinh(2c_1^2))$.

For $t_2=0$, $c_1^2$ is a line segment on the imaginary axis (Figure \ref{fig: lifting to universal cover 1}), and we have $D=\iota e^{c_1^1}$, $a_1=0, r_1=e^{c_1^1 + c_1^2}, E=Q, \text{ and hence } b_1=0$ in that case. This completes the proof for $t_1,t_2 \geq 0$. 
\begin{figure}[h]
\centering
\includegraphics[width=0.7\textwidth]{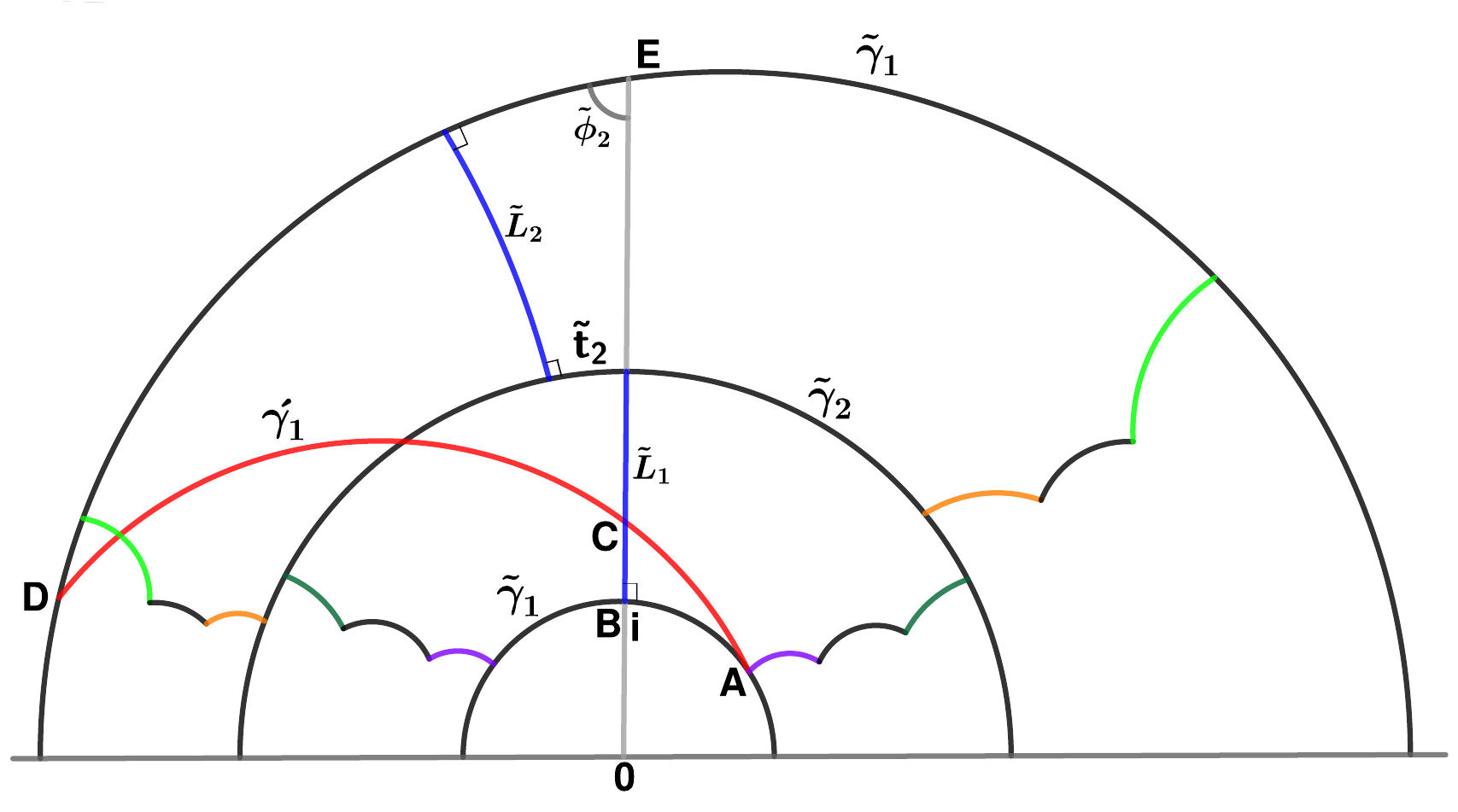}
\caption{The universal cover of $(S_g, \Tilde{m})$.}
\label{Fig. 8}
\end{figure}
         
\textbf{Twist Parameters:} As $T_{c_1}: (S_g, \Tilde{xi}) \longmapsto (S_g, xi)$ is an isometry, $\Tilde{t_i}=t_i$ for all $i \neq 1, 2, 3$. It suffices to compute $\tilde{t}_2$. $\tilde{t}_1$, $\tilde{t}_3$ can be computed similarly.
Setting $\gamma_1':=T_{c_1}^{-1}(\gamma_1)$, note that $\text{length}_{\mathbb{H}^2}(\gamma_1')=\text{length}_{(S_g, \tilde{\xi})}(\gamma_1')=\text{length}_{(S_g, \xi)}(\gamma_1)=\gamma_1$. In the triangle $\bigtriangleup ABC$, we have $\angle{B}=\frac{\pi}{2}$, $d_{\mathbb{H}^2}(A, B)=\frac{\Tilde{\gamma}_1}{2}$, and $\angle {A}=\angle({\Tilde{\gamma}_1, \gamma_1'})$ (Figure \ref{Fig. 8}) and $\angle({\Tilde{\gamma}_1, \gamma_1'})=\angle(\gamma_1, \Tilde{\gamma}_1)$ (see Figure \ref{fig: lifting to universal cover 1}) which can be seen using the formula (using Equation \ref{eq.4}) mentioned above. Let's call $d_1:=d_{\mathbb{H}^2}(A, C)$. Then the angle $\angle{C}$ and the side length $d_1$ of $\bigtriangleup ABC$ can be given by $\cos{C}=\sin{A}\cosh{\frac{\Tilde{\gamma}_1}{2}}$ (using Theorem 2.2.1 and Theorem 2.2.2 of \cite{PBuserBook}), and $\sinh{d_1}=\frac{\sinh{\frac{\Tilde{\gamma}_1}{2}}}{\sin{C}}$ (see \cite[Theorem 2.2.1]{PBuserBook}). Consider the triangle $\bigtriangleup CDE$. We have $\angle D=\angle(CDE)=\angle(BAC)=\angle A$, and $d_{\mathbb{H}^2}(D, C)=\gamma_1-d_1$. Let $\angle(CED)=\Tilde{\phi}_2$. Then we have $\cos{\tilde{\phi}_2}=\sin{A}\sin{C}\cosh(\gamma_1-d_1) - \cos{A}\cos{C}$ (by Theorem 2.2.1 of \cite{PBuserBook}). Therefore we obtain $\sinh{\Tilde{t}_2}=\frac{\cos{\Tilde{\phi}_2}}{\sinh{\Tilde{L}_2}}$ (see \cite[Theorem 2.3.1]{PBuserBook}) where $\cosh{\Tilde{L}_2}=\frac{\cosh{\frac{\gamma_4}{2}} + \cosh{\frac{\Tilde{\gamma}_1}{2}}\cosh{\frac{\Tilde{\gamma}_2}{2}}}{\sinh{\frac{\Tilde{\gamma}_1}{2}}\sinh{\frac{\Tilde{\gamma}_2}{2}}}$ (cf. \cite[Theorem 2.4.1]{PBuserBook}). This completes the proof.
\end{proof}
As an immediate consequence of Theorem \ref{thm: Induced Tc1}, we have the following result.
\begin{cor}
\label{cor.1}
\begin{enumerate}[(i)]
\item If either of the two twist parameters is zero in Thereom \ref{thm: Induced Tc1}, say, $t_j=0, j\neq i \in \{1,2\}$, then the other length parameter $\Tilde{\gamma}_i$ will be given by $\cosh{\frac{\Tilde{\gamma}_i}{2}}=\cosh{\frac{c_1^1+c_1^2}{2}}\cosh{\frac{\gamma_i-t_i}{2}}$.
\item If both the initial twist parameters are zero, i.e. $t_1=t_2=0$, then the two length parameters are given by $\cosh{\frac{\Tilde{\gamma}_i}{2}}=\cosh{\frac{c_1}{2}}\cosh{\frac{\gamma_i}{2}}$ for $i=1, 2$. 
\end{enumerate}
\end{cor}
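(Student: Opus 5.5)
Specialize the formula of Theorem \ref{thm: Induced Tc1}, equation \eqref{eq.8}, namely
\[
\cosh\frac{\tilde\gamma_i}{2}=\Bigl|\cosh(\ln\sqrt{r_i})\cosh\frac{\gamma_i-t_i-b_i}{2}+\frac{a_i}{2\sqrt{r_i}}\sinh\frac{\gamma_i-t_i-b_i}{2}\Bigr|,
\]
by feeding in the values of $a_i,r_i,b_i$ obtained in the proof when the \emph{other} twist parameter vanishes.

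For (i), take $i=1$ and $t_2=0$; the case $i=2$, $t_1=0$ is symmetric after relabeling the two boundary circles in the lift. The proof of Theorem \ref{thm: Induced Tc1} records that for $t_2=0$ the perpendicular $c_1^2$ lies on the imaginary axis. It also gives $D=\iota e^{c_1^1}$, $a_1=0$, $r_1=e^{c_1^1+c_1^2}$, and $E=Q$, so $b_1=0$. As a check, these values also follow from \eqref{eq.12} by setting $t_2=0$: then $\sinh t_2=0$, and
\[
r_1=\frac{e^{c_1^1}\operatorname{csch} c_1^2}{\coth c_1^2-1}=e^{c_1^1}\,\frac{1}{\cosh c_1^2-\sinh c_1^2}=e^{c_1^1+c_1^2}.
\]
Substituting gives $a_1/(2\sqrt{r_1})=0$ and $\cosh(\ln\sqrt{r_1})=\cosh\frac{c_1^1+c_1^2}{2}$. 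Hence $\cosh\frac{\tilde\gamma_1}{2}=\cosh\frac{c_1^1+c_1^2}{2}\,\bigl|\cosh\frac{\gamma_1-t_1}{2}\bigr|$, and the absolute value can be dropped because $\cosh$ is positive. This is exactly the claim in (i).

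For (ii), impose additionally $t_1=0$. The resulting formula is $\cosh\frac{\tilde\gamma_i}{2}=\cosh\frac{c_1^1+c_1^2}{2}\cosh\frac{\gamma_i}{2}$. It then remains to identify $c_1^1+c_1^2$ with the length $c_1$ of the geodesic seam curve. When $t_1=t_2=0$, the two common perpendiculars $c_1^1$ (in the pair of pants bounded by $\gamma_1,\gamma_2,\gamma_3$) and $c_1^2$ (in the pants bounded by $\gamma_1,\gamma_2,\gamma_4$) meet $\gamma_1$ and $\gamma_2$ at matching points. Their union is therefore a closed curve meeting each of $\gamma_1,\gamma_2$ orthogonally, hence smooth. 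In the lift it is the segment of the imaginary axis from $\iota$ to $\iota e^{c_1^1+c_1^2}$, with the endpoints identified by the deck transformation. It is thus a closed geodesic freely homotopic to $c_1$, and by uniqueness of geodesic representatives its length $c_1^1+c_1^2$ equals $c_1$. Note that (ii) must use this length $c_1$ of the seam curve (the sum of the two perpendiculars), not either perpendicular individually.

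The only delicate point is justifying the degenerate values $a_1=0$, $b_1=0$ at $t_2=0$, and the rest is substitution. If one prefers not to rely on the separate $t_2=0$ discussion, the values follow from the limit $t_2\to0$ in \eqref{eq.11}–\eqref{eq.12} together with the $b_1$ formula, as checked above.
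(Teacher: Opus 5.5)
Your proposal is correct and is essentially the paper's own (implicit) argument. You substitute the $t_2=0$ values $a_1=0$, $b_1=0$, $r_1=e^{c_1^1+c_1^2}$ from the proof of Theorem~\ref{thm: Induced Tc1} into \eqref{eq.8}, and for (ii) you use that vanishing twists make $c_1^1\cup c_1^2$ the closed geodesic representing $c_1$, so $c_1=c_1^1+c_1^2$, exactly as the paper does again in Corollary~\ref{cor: order-10}.

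The only thing to fix is your closing fallback remark. The printed closed form $b_1=\operatorname{arcosh}(2+\sinh^2 t_2\sinh(2c_1^2))$ gives $\operatorname{arcosh}2\neq 0$ at $t_2=0$, so it does not yield $b_1=0$. You must justify $b_1=0$ by $E=Q$ instead, or equivalently by $\cosh b_1=r_1/\operatorname{Im}E$, which equals $1$ at $t_2=0$ by \eqref{eq.11}.
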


The next theorem describes the \T map induced by $T_{c_i}$ where $c_i\in \mathcal{H}\cap\mathcal{P}$ ($2\leq i \leq g-1$) intersects exactly four pants curves in $\mathcal{P}$. We only elaborate the case $i=2$ here as all the remaining cases are similar. 

\begin{theorem}\label{thm: Induced Tc2}
The induced action of $T_{c_2}: \Teich(S_g) \longrightarrow \Teich(S_g)$ is given by
 {\small{$$(\gamma_1, . . ., \gamma_{3g-3}, t_1, . . ., t_{3g-3}) \mapsto (\gamma_1, \Tilde{\gamma}_2, . . ., \Tilde{\gamma}_5, \gamma_6, . . ., \gamma_{3g-3}, \Tilde{t}_1, . . ., \Tilde{t}_5, \Tilde{t}_6, \Tilde{t}_7, t_8, . . ., t_{3g-3}),$$}}
 
where $\Tilde{\gamma}_i$ for $2\leq i \leq 5$ are given by
$$\cosh{\frac{\Tilde{\gamma}_i}{2}}=|\cosh(\ln{\sqrt{r_i}})\cosh{\frac{\gamma_i-t_i-b_i}{2}} + \frac{a_i}{2\sqrt{r_i}}\sinh{\frac{\gamma_i-t_i-b_i}{2}}|$$
and the terms $a_i, b_i$ and $r_i$ are functions of $\gamma_j$'s and $t_j$'s. The twist parameters $\Tilde{t}_1, \dots, \Tilde{t}_7$ are given by an algorithmic method. 
\end{theorem}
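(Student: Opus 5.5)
The plan is to reduce Theorem \ref{thm: Induced Tc2} to the two-curve computation already carried out in Theorem \ref{thm: Induced Tc1}, applied separately to each pants curve that $c_2$ crosses.

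\textbf{Combinatorial step.} First I would read off from Figure \ref{fig. pants and seams curves} which curves are affected. The seams curve $c_2$ meets exactly the four pants curves $\gamma_2,\gamma_3,\gamma_4,\gamma_5$, each once. So $T_{c_2}(\gamma_j)=\gamma_j$ for every $j\notin\{2,\dots,5\}$. Using $\Tilde{\gamma}_j=\ell_{[T_{c_2}(\gamma_j)]}(\xi)$, this gives $\Tilde{\gamma}_j=\gamma_j$ for those $j$.

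The twist of a pants curve changes only if one of the seams curves crossing it, or one of the boundary lengths of its two adjacent pants, is altered. This happens precisely for $\gamma_1,\dots,\gamma_7$: $\gamma_1,\gamma_6,\gamma_7$ bound pants that share a boundary with one of $\gamma_2,\dots,\gamma_5$. Hence $\Tilde{t}_j=t_j$ for $j\ge 8$.

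\textbf{Length step.} For each $i\in\{2,3,4,5\}$, I would cut $(S_g,\xi)$ along $\gamma_i$ and lift to $\mathbb{H}^2$ as in Figure \ref{fig: lifting to universal cover 1}. Normalise one lift of $\gamma_i$ to be the unit circle, and let the neighbouring lift of $\gamma_i$, reached by crossing once along $c_2$, be the circle centred at $(a_i,0)$ of radius $r_i$.

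The arc of $c_2$ between two consecutive crossings of $\gamma_i$ passes through one other pants curve $\gamma_k$, namely $k=3$ for $i=2$, and so on, following the figure. It therefore splits into two common perpendiculars, whose lengths are given by the pants formula (\ref{formula for c_1^1 and c_1^2}) with the appropriate cuff lengths. Between them there is a hyperbolic translation along $\gamma_k$ by $t_k$, exactly as in (\ref{eq.10}).

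This reproduces the configuration of Theorem \ref{thm: Induced Tc1}, so (\ref{eq.11}) and (\ref{eq.12}) give $a_i,r_i$, and the offset $b_i$ along the lift of $\gamma_i$, as explicit functions of the $\gamma_j$ and of $t_k$. Parametrising the segments $\Gamma_l=[A_l,B_l]$ with $A_l=(-\tanh l,\operatorname{sech} l)$ and $B_l=(a_i+r_i\tanh l_1, r_i\operatorname{sech} l_1)$, where $l_1=-l-t_i-b_i+\gamma_i$, I would repeat the computation $\cosh\Gamma_l=k+f(l)$ and minimise in $l$. This yields the stated formula for $\cosh(\Tilde{\gamma}_i/2)$ verbatim.

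The new difficulty is that $c_2$ crosses two pants curves on each side of the four-holed region. I would handle this by choosing which intermediate curve $\gamma_k$ plays the role of $\gamma_2$ in Theorem \ref{thm: Induced Tc1}, and treat the sign cases $t_k\ge0$ and $t_k<0$ as there.

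\textbf{Twist step, and the main obstacle.} For $\Tilde{t}_1,\dots,\Tilde{t}_7$ I would follow the second half of the proof of Theorem \ref{thm: Induced Tc1}:
\begin{enumerate}[(i)]
\item Lift $(S_g,\Tilde{\xi})$, and use the equal-incidence-angle property (\ref{eq.4}) to obtain the angle between $\Tilde{\gamma}_i$ and $T_{c_2}^{-1}(\gamma_i)$.
\item Solve the right triangle and then the adjacent triangle to get the angle $\Tilde{\phi}$.
\item Apply \cite[Theorem 2.3.1]{PBuserBook} together with the new perpendicular lengths $\Tilde{L}$ from \cite[Theorem 2.4.1]{PBuserBook}, now computed with the updated cuff lengths $\Tilde{\gamma}_2,\dots,\Tilde{\gamma}_5$.
\end{enumerate}

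I expect this step to be the main obstacle. Each $\Tilde{t}_j$ depends on several of the new lengths at once, and on chains of triangles whose shape varies with the sign pattern of $t_2,\dots,t_5$. A closed formula is therefore impractical, which is why the statement only asserts an algorithmic description: one sequence of trigonometric computations per affected curve, performed after the lengths are known.
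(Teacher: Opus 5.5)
\paragraph{Gap: the crossing count in the length step.} Your length step miscounts crossings, and the reduction to Theorem \ref{thm: Induced Tc1} fails as a result. You correctly note that $c_2$ meets each of $\gamma_2,\gamma_3,\gamma_4,\gamma_5$ exactly once. But then the arc of $c_2$ between two consecutive crossings of $\gamma_i$ is the whole loop minus a point, so it crosses the \emph{three} remaining curves, not one. Starting at $\gamma_2$, the curve $c_2$ passes through $\gamma_3$, $\gamma_5$, $\gamma_4$ before returning to $\gamma_2$. So a lift of $T_{c_2}(\gamma_i)$ between consecutive lifts of $\gamma_i$ traverses four common perpendiculars $c_2^1,\dots,c_2^4$ and three intermediate lifts, each contributing a relative shift by its own twist parameter.

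The configuration of Theorem \ref{thm: Induced Tc1} has only two perpendiculars $c_1^1,c_1^2$ and a single translation (\ref{eq.10}) by $t_2$, so it is not reproduced here. Consequently, (\ref{eq.11}) and the formulas for $a_1,r_1,b_1$ that follow it put the second lift of $\gamma_i$ in the wrong place. You can see the failure directly. Your $\tilde{\gamma}_2$ would depend on only one of $t_3,t_4,t_5$. Yet $T_{c_2}(\gamma_2)$ crosses each of $\gamma_3,\gamma_4,\gamma_5$ exactly once, so its geodesic length genuinely varies with all three twists. Your proposed remedy, choosing which intermediate curve plays the role of $\gamma_2$ in Theorem \ref{thm: Induced Tc1}, cannot repair this, because it simply discards the other two shifts.

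\paragraph{The missing step: chain the translations.} Normalise the lift of $\gamma_2$ as the unit circle and the adjacent lift of $\gamma_3$ as $x^2+y^2=e^{2c_2^1}$. Then locate, in turn, the lifts of $\gamma_5$, of $\gamma_4$, and the next lift of $\gamma_2$. Do this by composing the hyperbolic translations $T_3,T_5,T_4$ (by $t_3,t_5,t_4$) along those lifts, interleaved with the perpendicular lengths $c_2^2,c_2^3,c_2^4$ from the pants formula. This is what the paper does: it obtains $D_2=(T_4T_5T_3)^{-1}(ie^{c_2^1+c_2^2+c_2^3+c_2^4})$, and from this the circle with centre $(a_2,0)$ and radius $r_2$, its top point $Q_2=(a_2,r_2)$, and $b_2=d_{\mathbb{H}^2}(D_2,Q_2)$.

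With $a_i,r_i,b_i$ computed this way, they now depend on three twist parameters rather than one. Your minimisation over $\Gamma_l$ then goes through unchanged and yields the stated formula for $\cosh(\tilde{\gamma}_i/2)$. Your twist step is at the same level of detail as the paper's, which likewise defers to the argument of Theorem \ref{thm: Induced Tc1}.
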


\begin{figure}[h]
     \centering
        \includegraphics[width=0.7\textwidth]{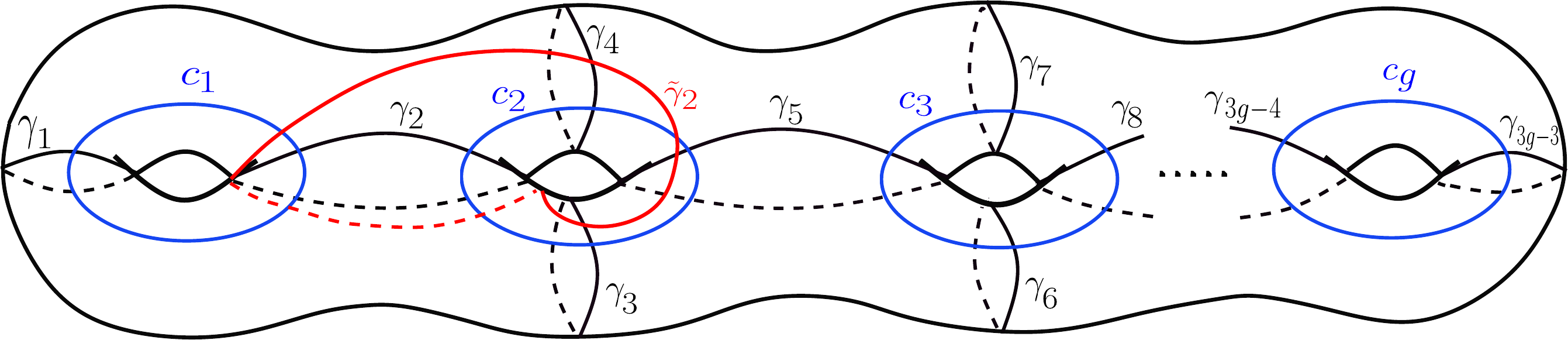}
         \caption{Action of $T_{c_2}$ on $\gamma_2$ in $(S_g, \xi)$.}
         \label{Fig. 12}
\end{figure}

\begin{proof} As the proof is very similar to that of Theorem \ref{thm: Induced Tc1}, we omit some details here to avoid repetitions.
 
\textbf{Length Parameters:}  We only compute $a_2,b_2$ and $r_2$ appearing in the formula of $\tilde{\gamma}_2$, (see Figure \ref{Fig. 11}). Similar computations follow for $a_i,b_i,r_i$ in general. As before, without loss of generality, we assume $t_1, \dots, t_5>0$.
\begin{figure}[h]
     \centering
        \includegraphics[width=0.8\textwidth]{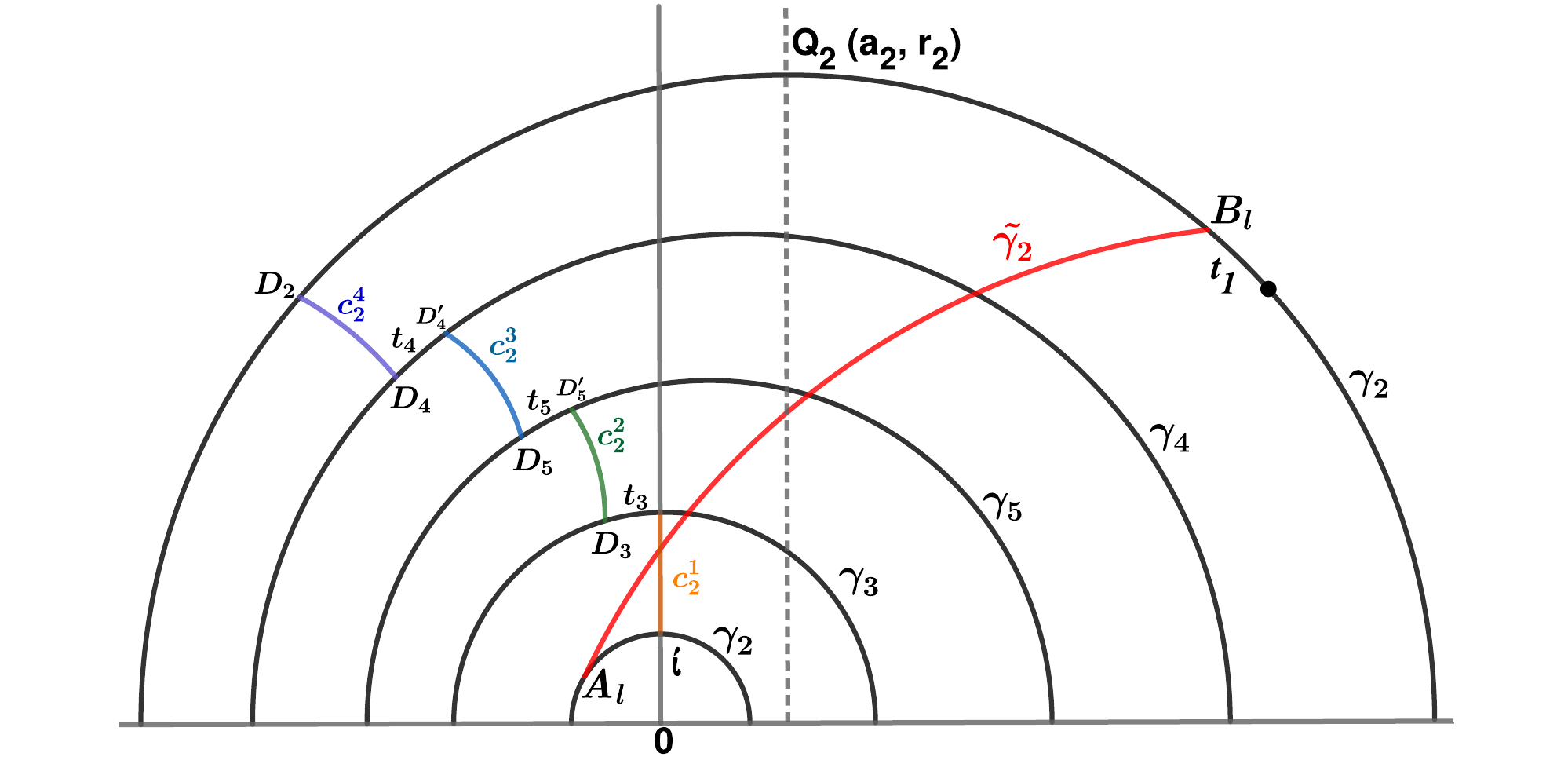}
         \caption{Lifting of $\Tilde{\gamma}_2=T_{c_2}(\gamma_2)$ onto the universal cover of $(S_g,m)$.}
         \label{Fig. 11}
\end{figure}
Let the geodesic $\gamma_3$ be represented by the circle $x^2+y^2=e^{2c_2^1}$, together with the point $D_3=(-e^{c_2^1}\tanh{t_3}, e^{c_2^1}\operatorname{sech}{t_3})$ (see Figure \ref{Fig. 11}). The common perpendicular $c_2^2$ meeting $\gamma_3$ orthogonally at $D_3$ is thus given by the circle $(x-p_2)^2+y^2=s_2^2$ with $p_2=-e^{c_2^1}\coth{t_3}$ and $s_2=e^{c_2^1}\operatorname{csch}{t_3}$. Using the M\"obius transformation $T_3(z)=\frac{z\cosh{\frac{t_3}{2}} + e^{c_2^1}\sinh{\frac{t_3}{2}}}{ze^{-c_2^1}\sinh{\frac{t_3}{2}} + \cosh{\frac{t_3}{2}}}$ sending the circle $c_2^2$ to the imaginary axis with translation length $t_3$, we obtain $D_5'=T_3^{-1}(ie^{c_2^1+c_2^2}) = (x_5', y_5')$ where $x_5'=-\frac{e^{c_2^1}\sinh{t_3}}{\cosh{t_3}-\tanh{c_2^2}}$ and $y_5'=\frac{e^{c_2^1}\operatorname{sech}{c_2^2}}{\cosh{t_3}-\tanh{c_2^2}}$. Likewise, the geodesic $\gamma_5$ is given by the circle $(x-a_5)^2+y^2=r_5^2$ where $a_5=\frac{e^{c_2^1}\sinh{t_3}}{\coth{c_2^2} - \cosh{t_3}}$ and $r_5=\frac{e^{c_2^1}\operatorname{csch}{c_2^2}}{\coth{c_2^2} - \cosh{t_3}}$. Consider the M\"obius transformations $T_4$ and $T_5$ where $T_5(z)=\frac{z\cosh{\frac{t_5}{2}} + e^{c_2^1+c_2^2}\sinh{\frac{t_5}{2}}}{ze^{-c_2^1-c_2^2}\sinh{\frac{t_5}{2}} + \cosh{\frac{t_5}{2}}}$ mapping the circle $c_2^3$ to the imaginary axis with translation length $t_5$ along $\gamma_5$, and $T_4(z)=\frac{z\cosh{\frac{t_4}{2}} + e^{c_2^1+c_2^2+c_2^3}\sinh{\frac{t_4}{2}}}{ze^{-c_2^1-c_2^2-c_2^3}\sinh{\frac{t_4}{2}} + \cosh{\frac{t_4}{2}}}$ mapping the circle $c_2^4$ onto the imaginary axis with translation length $t_4$ along $\gamma_4$. Then the points $D_2$ and $D_4$ on $\gamma_2$ and $\gamma_4$ respectively and representing the common perpendicular $c_2^4$ are $D_4=(T_4T_5T_3)^{-1}(ie^{c_2^1+c_2^2+c_2^3})$, and $D_2=(T_4T_5T_3)^{-1}(ie^{c_2^1+c_2^2+c_2^3+c_2^4})$. Consequently, the circle $(x-p_4)^2+y^2=s_4^2$ represents the geodesic segment $c_2^4$. Since $\gamma_2$ meets $c_2^4$ the orthogonally at $D_2$, the point $Q_2=(a_2, r_2)$ is determined and consequently, by the hyperbolic length formula $b_2=d_{\mathbb{H}^2}(D_2, Q_2)$, is also determined. 

\textbf{Twist Parameters:} This part is similar to the computations of twist parameters shown in Theorem \ref{thm: Induced Tc1} or steps described in Theorem \ref{thm: 10-order induced action}. In both cases, the computations are done by lifting the hyperbolic surface $\left(S_g, \tilde{\xi}\right)$ to its universal cover. In view of this, we omit those similar but not so insightful calculations. 
\end{proof}

\subsection{The induced action of a mapping class through its factorization via Dehn twists}
Given $F\in \Mod(S_g)$, its induced action $F_{\#}: \Teich(S_g) \to \Teich(S_g)$ can be explicitly computed by factorizing $F$ as a product of finitely many Dehn twists about Humphries generators and writing $F_{\#}$ as the composite of the individual actions induced by those Dehn twists. In fact, consider a mapping class $F$ written as a finite product of Dehn twists about some of the Humphries' generators (refer to the Theorem \ref{thm: Humphries}). For each of the Dehn twists coming in the presentation of $F$, we can apply Theorems \ref{thm.4.1}, \ref{thm: Induced Tc1}, and/or \ref{thm: Induced Tc2} to obtain the induced action of the Dehn twist, and finally, composing them one obtains the desired induced action $F_{\#}$ explicitly.

\begin{rem}
However, it is worth noting that the above strategy of computing the induced action of a mapping class is effective only when the explicit factorization of $F$ into Dehn twists is given. This factorization is not always available or even well understood.   Moreover, even when the factorization of a mapping class is known, composing the induced actions of the product to get $F_{\#}$ using this method becomes quite challenging and cumbersome for an arbitrary periodic $F$ as the number of factors or the genus of the surface may increase. Due to this limitation, we need to modify this method by carefully choosing the Humphries' generators for a particular $F$.   
\end{rem}

\section{The Induced Action of Finite Order Mapping Classes bypassing its factorization into Dehn twists}
In this section we discuss a modified version of Algorithm \ref{algo:1} which enables us to understand the induced action of a Finite Order Mapping Class on the \T space where the explicit factorization of the mapping class in terms of Dehn twists is not necessarily known and compute the corresponding fixed points in the \T space. The modified algorithm is stated below.
\begin{algo}\label{algo:2}  Let $F\in \Mod(S_g)$, and $\xi, \tilde{\xi} \in \HypMet(S_g)$ such that under the induced action $F_{\#}: \Teich(S_g) \longrightarrow \Teich(S_g)$, $F_{\#}([\xi])=[\tilde{\xi})] \in \Teich(S_g)$. 
\begin{enumerate}[\textit{Step} 1.] 

\item If $\mathcal{P}=\{\gamma_i\}_{i\in I}$ is the chosen pants decomposition for $S_g$, find the corresponding pants decomposition determined by $T$, that is $\tilde{\mathcal{P}}=\{T(\gamma_i)\}_{i\in I}$.
\item To compute the length of each $\tilde{\gamma_i}:=T(\gamma_i)$, cut the hyperbolic surface $(S_g, \xi)$ appropriately (possibly along the curve $\gamma_i$ if $[\gamma_i] \neq [\tilde{\gamma_i}]$) and lift it to the universal cover (the upper-half plane $\mathbb{H}^2$ or Poincar\'e disc $\mathbb{D}$).
\item In the universal cover, to find the corresponding unique geodesic representative $[\tilde{\gamma_i}]$, parametrize the family of simple closed smooth curves homotopic to $\tilde{\gamma_i}$, and minimize the length over the family of such curves.
\item For determining the twist parameters $\tilde{t_i}$ of $\tilde{\gamma_i}$, cut the surface $(S_g, \tilde{\xi})$ suitably along the curve $\tilde{\gamma_i}$ and lift it to the universal cover, and then use the method as described in Section \ref{Sec:Method1}. 
\end{enumerate}
\end{algo}
To demonstrate the algorithm we apply it on the irreducible Type 1 mapping class of order $4g+2$ on $S_g$ and study the corresponding induced action on the \T space.
\subsection{The special case of genus $g=2$} An application of Algorithm \ref{algo:2} is demonstrated in the following result.
 
\begin{theorem}\label{thm: 10-order induced action}
Let $\mathcal{P}_2=\{\gamma_1, \gamma_2, \gamma_3\}$ denote a pants decomposition of $S_2$ and $\Gamma_2=\{\gamma_1, c_1, \gamma_2, c_2 \}$ be a filling chain collection where $\mathcal{C}_2=\{c_1, c_2, c_3\}$ denotes the corresponding collection of seams curves as shown in Figure \ref{Fig. 10 order action on S2}. Then the induced action $F_{\#}: \Teich(S_2) \longrightarrow \Teich(S_2)$ of the mapping class $F=T_{\gamma_1}T_{c_1}T_{\gamma_2}T_{c_2}$ is given by 
$$(\gamma_1, \gamma_2, \gamma_3, t_1, t_2, t_3) \mapsto (c_1, c_2, c_3, \Tilde{t}_1, \Tilde{t}_2, \Tilde{t}_3)$$
 where $c_1, c_2, c_3$ are as follows
$$\cosh{\frac{c_i}{2}}=|\cosh(\ln{\sqrt{r_i}})\cosh{\frac{t_i+b_i}{2}} - \frac{a_i}{2\sqrt{r_i}}\sinh{\frac{t_i+b_i}{2}}|$$
and $\Tilde{t}_1, \Tilde{t}_2, \Tilde{t}_3$ are given by an algorithmic method.
\end{theorem}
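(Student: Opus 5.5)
We will follow Algorithm \ref{algo:2} rather than composing the four Dehn twist actions of Theorems \ref{thm.4.1}--\ref{thm: Induced Tc2}. The first step is purely topological. We compute the images $F(\gamma_i)$ of the pants curves under the chain map $F=T_{\gamma_1}T_{c_1}T_{\gamma_2}T_{c_2}$, using the chain relations $T_aT_b(a)=b$ for curves with $i(a,b)=1$. Applying the factors right to left, $T_{c_2}$ and $T_{\gamma_2}$ fix $\gamma_1$, and then $T_{\gamma_1}T_{c_1}(\gamma_1)=c_1$. Similarly, $\gamma_2$ is disjoint from $c_2$'s relevant twist up to the chain relation, and we get $F(\gamma_2)=c_2$. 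For $\gamma_3$, which is disjoint from $\gamma_1,\gamma_2$ and meets the chain only through $c_1$ and $c_2$, one checks from Figure \ref{Fig. 10 order action on S2} that $F(\gamma_3)$ is isotopic to the third seam curve $c_3$. Hence $\tilde{\mathcal P}=\{c_1,c_2,c_3\}$ (possibly up to a relabelling that we fix by the figure). Since $\tilde\gamma_i=\ell_{[F(\gamma_i)]}(\xi)$, the new length coordinates are exactly the $\xi$-lengths of the seam curves $c_i$. This settles the shape of the map $(\gamma_1,\gamma_2,\gamma_3,t_1,t_2,t_3)\mapsto(c_1,c_2,c_3,\tilde t_1,\tilde t_2,\tilde t_3)$.

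For the lengths we proceed as in the proof of Theorem \ref{thm: Induced Tc1}. The seam $c_i$ crosses the pants curves; we cut $(S_2,\xi)$ along $\gamma_i$ and lift to $\mathbb H^2$. We normalise one lift of $\gamma_i$ to be the unit circle and let the other boundary lift be the circle centred at $(a_i,0)$ of radius $r_i$. The quantities $a_i$ and $r_i$ are determined, exactly as in \eqref{eq.11}--\eqref{eq.12}, by the common perpendicular lengths from \eqref{formula for c_1^1 and c_1^2} and by the Möbius translation of length $t_{i+1}$ along the adjacent pants curve. We also define $b_i$ as the offset $d_{\mathbb H^2}(E,Q)$, as before. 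We then parametrise closed curves homotopic to $c_i$ by segments $[A_l,B_l]$ joining the two lifts. The key difference from Theorem \ref{thm: Induced Tc1} is the gluing: $c_i$ is not twisted around $\gamma_i$, so the endpoint parameter becomes $l_1=-l+t_i+b_i$ instead of $-l-t_i-b_i+\gamma_i$. Thus the quantity $d=\gamma_i-t_i-b_i$ in the earlier computation is replaced by $-(t_i+b_i)$. Next we repeat the computation $\cosh\Gamma_l=k+f(l)$, minimise in $l$, and verify equality of incidence angles at the minimiser so that the segment closes up smoothly. Halving via the symmetric half-segment gives the stated $\cosh\frac{c_i}{2}$ formula, with the sign of the $\sinh$ term flipped. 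The modulus accounts for the sign cases of the $t_j$. We treat $t_j\ge 0$ in detail and handle the other sign cases symmetrically.

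For the twists we apply Step 4 of the algorithm. We cut $(S_2,\tilde\xi)$ along the new pants curves $c_1,c_2,c_3$ and lift. We then locate the images of the new seam curves, which are the curves $F(c_j)$; these are again determined by the chain relations, for example $F(c_1)=\gamma_2$. Finally we compute the signed offsets between the feet of the common perpendiculars by right-angled triangle and pentagon trigonometry (\cite[Thms.~2.2.1, 2.3.1]{PBuserBook}), as done for $\tilde t_2$ in Theorem \ref{thm: Induced Tc1}. This yields $\tilde t_i$ only through a chain of explicit steps, so it is stated as an algorithmic method.

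The main obstacle is the topological identification of $F(\gamma_3)$ and of the images of the seams, together with keeping orientations and twist signs consistent. We would settle this first by tracking the curves through the chain relations on the figure. After that, the remaining difficulty is the routine but delicate bookkeeping of the normalisation constants $a_i,b_i,r_i$.
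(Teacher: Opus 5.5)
Your proposal is correct and follows essentially the same route as the paper. You identify $F(\gamma_i)=c_i$ so that $\tilde\gamma_i=\ell_{\xi}(c_i)$, compute this length by normalising the two lifts of $\gamma_i$ as the unit circle and the circle centred at $(a_i,0)$ of radius $r_i$ and minimising over the one-parameter family of segments, and handle the twists by lifting and locating the feet of common perpendiculars. Your shortcut of substituting $d\mapsto -(t_i+b_i)$ into Theorem~\ref{thm: Induced Tc1} is the paper's parametrisation $l_1=l-(b_1+t_1)$ up to $l\mapsto -l$.

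Two small slips. With the convention $l_1=-l+d$ of Theorem~\ref{thm: Induced Tc1}, your endpoint parameter should read $l_1=-l-(t_i+b_i)$ rather than $-l+t_i+b_i$, which would put a $+$ sign in front of the $\sinh$ term. Also, $\gamma_3$ meets $c_2$ and $c_3$, not $c_1$.
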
 
\begin{figure}[ht]
     \centering
     \begin{subfigure}[t]{0.4\textwidth}
         \centering
         \includegraphics[width=\textwidth]{PnS_curves_S2.pdf}
         \caption{Pants and seams curves on $S_2$.}
     \end{subfigure}
     \hspace{0.1 cm}
     \begin{subfigure}[t]{0.53\textwidth}
         \centering
         \includegraphics[width=\textwidth]{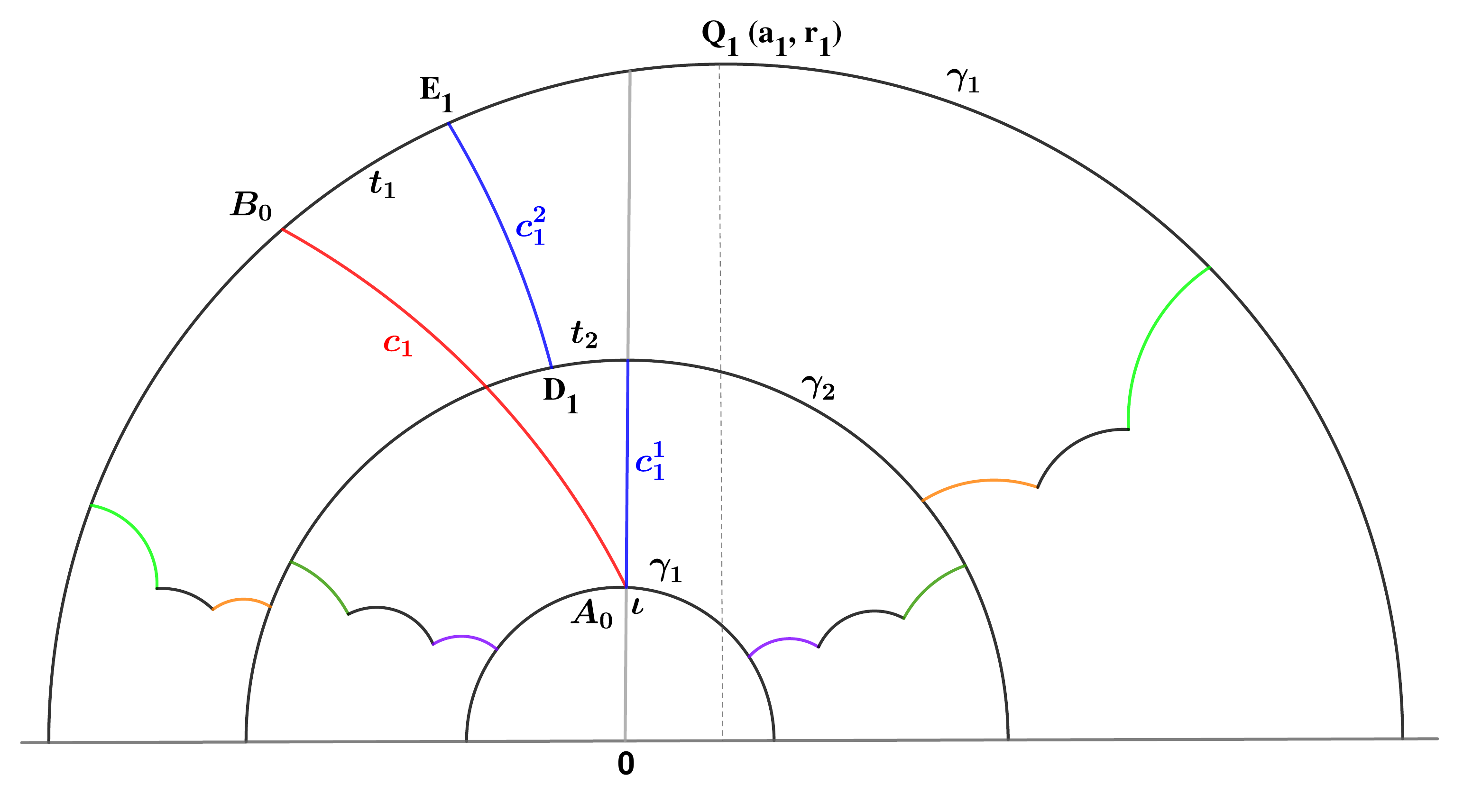}
         \caption{Universal cover of $S_2$.}
     \end{subfigure}
      \caption{10 order action on $S_2$.}
      \label{Fig. 10 order action on S2}
\end{figure} 
\begin{proof}
\textbf{Length parameters.} In this case, $[T(\gamma_i)]=[c_i]$ for $i=1, 2, 3$.  Thus $\tilde{\gamma}_i=\ell_{[\gamma_i]}(\tilde{\xi})=\ell_{[T(\gamma_i)]}(\xi)=\ell_{[c_i]}(m)=:c_i$ for any $[\xi]=(\gamma_1, \gamma_2, \gamma_3, t_1, t_2, t_3)$ in $\Teich(S_2)$. We only compute the length $\tilde{\gamma}_1$. The computations of $\tilde{\gamma}_2$ and $\tilde{\gamma}_3$ are similar.

In the upper-half plane, let the two copies of the geodesic $\gamma_1$ (see Figure \ref{Fig. 10 order action on S2}(B)) be represented by the circles $x^2 + y^2=1$ and $(x-a_1)^2 + y^2=r_1^2$. Let $B_1=(a_1, r_1)$, and $b_1=d_{\mathbb{H}^2}(B_1, D_1)$. Clearly, $t_1=d_{\mathbb{H}^2}(E_1, D_1)$. Consider the one-parameter family of line segments $\{\Gamma_l\}$ homotopic to $c_1$ where $\Gamma_l$ is the line segment joining the points $A_l=(\tanh l, \operatorname{sech}l)$ and $E_l=(a_1+r_1\tanh {l_1}, r_1\operatorname{sech}{l_1})$ where $l_1=-b_1-t_1+l$ and $l\in [-\frac{\gamma_1}{2}, \frac{\gamma_1}{2}]$ with $|l|=d_{\mathbb{H}^2}(\iota, A_l)$. Following similar argument used in the proof of Theorem \ref{thm: Induced Tc1}, we obtain
\begin{equation}
\label{eq.12}
\cosh {\Gamma_l} = 1 + \frac{|A_l-\overline{E_l}|^2}{2\text{Im}(A_l)\text{Im}(E_l)}= K+f(l),
\end{equation} 
where $K= (\frac{a_1^2+r_1^2+1}{4r_1} + \frac{1}{2})\cosh d - \frac{a_1}{2r_1}(r_1+1)\sinh d$, $f(l)=(\frac{a_1^2+r_1^2+1}{4r_1} - \frac{1}{2})\cosh (2l-d) + \frac{a_1}{2r_1}(r_1-1)\sinh (2l-d)$, and $d=b_1+t_1$. Differentiating Equation \ref{eq.12} and equating it to zero, leads to the critical point $l=l_0=\frac{d}{2} + \frac{1}{2}\ln|\frac{a_1+1-r_1}{a_1-1+r_1}|$ and we get $\tilde{\gamma}_1=c_1=\text{length}(\Gamma_{l_0})$. Thus

\begin{equation}
\label{eq:finaleqn_order10action}
\cosh {\frac{c_1}{2}} = \{\frac{1 + \cosh {c_1}}{2}\}^{\frac{1}{2}}= \frac{1}{2\sqrt{r_1}}|(r_1+1)\cosh {\frac{d}{2}} - a_1\sinh {\frac{d}{2}}|= |\cosh {\ln(\sqrt{r_1})}\cosh {\frac{d}{2}} - \frac{a_1}{2\sqrt{r_1}}\sinh {\frac{d}{2}}|.
\end{equation} 

\noindent This completes the proof as $d=b_1+t_1$. Computations to determine the values of $a_1,r_1$, and $b_1$, are similar as in the proof of Theorem \ref{thm: Induced Tc1} yielding the desired values given by Equation \ref{eq.11}.

\textbf{Twist parameters.}  To find the twist parameters, we lift the surface $(S_2, m)$ in the universal cover, and identify the hyperbolic lines that represents the curves $c_i=T(\gamma_i)$ for $i=1, 2, 3$. Now to compute $\tilde{t}_i$, find the hyperbolic lines that represents the two common perpendiculars between $c_i$ and $c_j$ for $j\in \{1, 2, 3\}\setminus \{i\}$, and their intersection points on $c_i$ in terms of Euclidean coordinates, and measure the distance in between them with the appropriate sign convetion.
\end{proof}

The following result is an important application of Theorem \ref{thm: 10-order induced action} leading to non-trivial properties of the induced map $F_{\#}: \Teich(S_2) \longrightarrow \Teich(S_2)$ when $t_1=t_2=t_3=0$.
\begin{cor}\label{cor: order-10} Given a pants decomposition $\{\gamma_1, \gamma_2, \gamma_3\}$ of $S_2$ and a filling chain collection $\{\gamma_1, c_1, \gamma_2, c_2 \}$ as shown in Figure \ref{Fig. 10 order action on S2}(A), let  $F_{\#}: \Teich(S_2) \longrightarrow \Teich(S_2)$ be the induced action of $F=T_{\gamma_1}T_{c_1}T_{\gamma_2}T_{c_2}$ as in Theorem \ref{thm: 10-order induced action}. If $t_1=t_2=t_3=0$, then $F_{\#}$ has the following properties:.
\begin{enumerate}[(i)]
\item $\Tilde{t}_1=\Tilde{t}_2=\Tilde{t}_3=0$ and 
$\cosh{\frac{c_i}{2}}=\frac{\cosh{\frac{\gamma_i}{2}}\cosh{\frac{\gamma_{i+1}}{2}} \hspace{0.1cm}+ \hspace{0.1cm}\cosh{\frac{\gamma_{i+2}}{2}}}{\sinh{\frac{\gamma_i}{2}}\sinh{\frac{\gamma_{i+1}}{2}}}$ for $i=1, 2, 3$, with $i+1, i+2$ counted up to congruence modulo $3$.
\item The sub manifold $\mathcal{H}:=\{(x_1, x_2, x_3, 0, 0, 0) | x_1, x_2, x_3>0 \} \subset \Teich(S_2)$ remains invariant under the map $F_{\#}$. In fact, $F_{\#}(\mathcal{H})=\mathcal{H}$ and $\mathcal{H}$ is a Lagrangian submanifold of $\Teich(S_2)$ with respect to its natural symplectic structure. 
\item The $1$-dimensional submanifold $\mathcal{H}_0:=\{(x, x, x, 0, 0, 0)| x>0\} \subset \mathcal{H}$ is invariant under $F_{\#}$. In fact, $F_{\#}(\mathcal{H}_0)=\mathcal{H}_0$ and $F_{\#}(x, x, x, 0, 0, 0)=(f(x), f(x), f(x), 0, 0, 0)$ where $f: \mathbb{R} \to \mathbb{R}_{+}$ is given by $f(x)=2\operatorname{arcosh}(\frac{1}{1-\operatorname{sech}{\frac{x}{2}}})$. 
\item The unique fixed point of $F_{\#}$ on $\Teich(S_2)$ is given by
 $m_0:=(x_0, x_0, x_0, 0, 0, 0)\in \mathcal{H}_0$ where $x_0=\operatorname{arcosh} 2=\ln(7+2\sqrt{3})$. 
\item $F_{\#}$ is not an Euclidean isometry.
\end{enumerate}
\end{cor}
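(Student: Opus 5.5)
The plan is to exploit the extra symmetry that the hyperplane $\{t_1=t_2=t_3=0\}$ carries, instead of working through the general formula of Theorem \ref{thm: 10-order induced action}. Take $[\xi]=(\gamma_1,\gamma_2,\gamma_3,0,0,0)$. With zero twists, $(S_2,\xi)$ is the double of a single hyperbolic pair of pants $P$ with boundary lengths $\gamma_1,\gamma_2,\gamma_3$. So it has an orientation-reversing isometric involution $\sigma$ which fixes every pants curve and every seam pointwise (up to the standard picture in Figure \ref{Fig. 10 order action on S2}(A)).

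\textbf{Step 1: lengths in (i).} The seam curve $c_i$ meets the two pants along the common perpendiculars between $\gamma_i$ and $\gamma_{i+1}$, one in each copy of $P$. The concatenation of these two perpendiculars is a closed curve with right angles at both junctions, hence smooth there. It is therefore the geodesic representative of $[c_i]=[T(\gamma_i)]$, so $\tilde\gamma_i=c_i$ equals twice the perpendicular length. Buser's formula \cite[Theorem 2.4.1]{PBuserBook}, already used in Equation \eqref{formula for c_1^1 and c_1^2}, then gives
\[
\cosh\frac{c_i}{2}=\frac{\cosh\frac{\gamma_i}{2}\cosh\frac{\gamma_{i+1}}{2}+\cosh\frac{\gamma_{i+2}}{2}}{\sinh\frac{\gamma_i}{2}\sinh\frac{\gamma_{i+1}}{2}}.
\]
As a consistency check, this should agree with Theorem \ref{thm: 10-order induced action} after setting $t_i=0$, so that $a_1=0$, $b_1=0$ and $r_1=e^{c_1^1+c_1^2}$.

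\textbf{Step 2: twists in (i).} The geodesics $c_1,c_2,c_3$ are $\sigma$-invariant. By uniqueness, the common perpendiculars between $c_i$ and $c_j$ inside the new pants are also $\sigma$-invariant. Since $\sigma$ reverses orientation, it sends the twist $\tilde t_i$ to $-\tilde t_i$, and hence $\tilde t_i=0$.

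\textbf{Step 3: (ii) and (iii).} By Step 2, $F_\#(\mathcal H)\subset\mathcal H$. Equality follows by applying the same reasoning to $F^{-1}$, or from $F^{10}=\mathrm{id}$. For the Lagrangian property I will invoke Wolpert's formula $\omega_{WP}=\sum_i d\ell_i\wedge dt_i$. Its restriction to $\{t=0\}$ vanishes, and $\dim\mathcal H=3=\frac12\dim\Teich(S_2)$. The same argument shows $\mathcal H_0$ is isotropic.

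For (iii), set $\gamma_1=\gamma_2=\gamma_3=x$ and $u=\cosh\frac x2$. The formula in (i) gives
\[
\cosh\frac{f(x)}2=\frac{u^2+u}{u^2-1}=\frac{u}{u-1}=\frac{1}{1-\operatorname{sech}\frac x2},
\]
which is the stated $f$.

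\textbf{Step 4: (iv).} A fixed point in $\mathcal H_0$ requires $\frac{u}{u-1}=u$, that is $u=2$, so $\cosh\frac{x_0}{2}=2$. I expect the main obstacle here to be normalization: one must reconcile this with the stated value of $x_0$ by checking which length convention (full or half) the coordinate $x_0$ records. Uniqueness of the fixed point in all of $\Teich(S_2)$ does not come from this computation. It follows from the result recalled in the introduction that an irreducible Type 1 action has a unique fixed point in Teichm\"uller space.

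\textbf{Step 5: (v).} A Euclidean isometry of $\mathbb R^6$ maps the line $\mathcal H_0$ affinely onto the line $F_\#(\mathcal H_0)=\mathcal H_0$. It would therefore restrict to an affine function $x\mapsto \pm x+c$. However, $f(x)\to 0$ as $x\to\infty$ while $f(x)\to\infty$ as $x\to 0^+$, so $f$ is not affine. Hence $F_\#$ is not a Euclidean isometry.
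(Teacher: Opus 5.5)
There is a genuine gap in Step~2. It cannot be repaired, because the conclusion it is meant to deliver is false.

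\textbf{Where Step~2 fails.} The coordinates $\tilde t_i$ of $F_{\#}[\xi]$ are the twists of $(S_2,\xi)$ about $c_i=F(\gamma_i)$. They must be measured with the \emph{transported} seam system $F(\mathcal{C}_2)$. A reflection negates twists only relative to a seam system it preserves. For $\sigma$, take the reflection exchanging the two pants: it fixes each $\gamma_j$ pointwise and each $c_j$ setwise. (No nontrivial isometry can fix a pants curve and a transverse seam both pointwise.) So your argument silently assumes $F(\mathcal{C}_2)=\mathcal{P}_2$, and this assumption is false. Since $c_3$ is disjoint from $c_1,\gamma_2,c_2$, we get $F(c_3)=T_{\gamma_1}(c_3)$. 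This curve meets $\gamma_1$ once, while every curve of $\mathcal{P}_2$ is disjoint from $\gamma_1$. In fact $T_{\gamma_1}(c_3)=T_{c_3}^{-1}(\gamma_1)$, and one checks (e.g.\ in the hyperelliptic picture) that $F(\mathcal{C}_2)$ is $\mathcal{P}_2$ twisted once about $c_3$. Since $\sigma T_{c_3}\sigma^{-1}=T_{c_3}^{-1}$, your symmetry argument run with the correct seams gives $\tilde t_1=\tilde t_2=0$ but $\tilde t_3=\pm\tilde\gamma_3\neq 0$. The paper's proof of (i) rests on the same unverified assertion, that $F$ ``interchanges the pants and seams curves''.

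\textbf{Why the conclusion is false.} A check using only the statement: $u\mapsto u/(u-1)$ is an involution of $(1,\infty)$, so $f\circ f=\mathrm{id}$. Then (iii) would make $F_{\#}^2$ fix $\mathcal{H}_0$ pointwise. But $F^2$ has order $5$, and Riemann--Hurwitz forces its quotient orbifold to be $S^2(5,5,5)$, whose Teichm\"uller space is a point. So $F_{\#}^2$ has exactly one fixed point.

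Likewise, $m_0$ (where $\cosh\frac{x_0}{2}=2$) is glued from four regular right-angled hexagons. Lifting the rotational symmetries of the doubled hexagon gives a group of $24$ orientation-preserving isometries. An extra isometry of order $5$ would make the isometry group's order divisible by $120>84$, contradicting Hurwitz's bound. Hence $m_0$ is not fixed by $F_{\#}$. So the invariance in (ii), all of (iii), and the fixed point in (iv) cannot be proved as stated.

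\textbf{What survives.} The length formula of Step~1 is correct, and $\mathcal{H}$ is Lagrangian (though not $F_{\#}$-invariant). Part (v) also survives if you drop $F_{\#}(\mathcal{H}_0)=\mathcal{H}_0$ from Step~5. Restricted to a line, each coordinate of a Euclidean isometry is affine in the parameter. The first coordinate of $F_{\#}(x,x,x,0,0,0)$ is $f(x)$, which blows up as $x\to 0^+$, and this uses only the correct length computation. That makes your argument for (v) sturdier than the paper's, which measures distances to $m_0$.

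\textbf{On Step~4.} Your worry is not a normalization issue. $\cosh\frac{x_0}{2}=2$ gives $x_0=2\operatorname{arcosh}2=\ln(7+4\sqrt3)$. The printed $\operatorname{arcosh}2=\ln(2+\sqrt3)$ and $\ln(7+2\sqrt3)$ disagree with this value and with each other.
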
 

\begin{proof}
\begin{enumerate}[(i)] 
\item We note that $F$ interchanges the pants and seams curves of $S_2$ which means that $\mathcal{C}_2=F(\mathcal{P}_2)$ becomes a new pants decomposition of $S_2$ while $\mathcal{P}_2$ representing the corresponding seams curves. If all three initial twist parameters are zero, then it follows that all the common perpendiculars between the pants curves along the respective seams curves meet at the same points on the respective pants curves. Consequently, each pair of those common perpendiculars along each seams curve forms a simple closed geodesic homotopic to the respective seams curve (see Figure \ref{Fig. 10 order action on S2}(A)). Thus, without loss of generality, for each $j=1,2,3$, we may assume the seams curve $c_j$ to be a simple closed geodesic perpendicularly intersecting the respective unique geodesic representatives of the corresponding two pants curves. This shows that each geodesic representative $[\gamma_i]$ perpendicularly intersects $[c_j]$ whenever the corresponding curves $\gamma_i$ and $c_j$ meet. Hence the first assertion follows.

For the second part, we prove the desired result in the case of $i=1$ using Equation \ref{eq:finaleqn_order10action}. By a similar reasoning as in the Corollary \ref{cor.1}, we obtain $a_1=0, b_1=0, \text{ and } r_1=e^{c_1^1+c_1^2}$ (see Figure \ref{Fig. 10 order action on S2}(B)), giving us $c_1=c_1^1+c_1^2$. As the seams curves $[c_j]$ and the pants curves $[\gamma_i]$ intersect each other perpendicularly (provided they meet), it follows (from basic hyperbolic geometry facts) that they bisect each other. Thus we have $\frac{c_1}{2}=c_1^1=c_1^2$. The proof then follows from \cite[Theorem 2.4.1 (i)]{PBuserBook}.

\item The fact that $F_{\#}(\mathcal{H}) \subset \mathcal{H}$ follows from (i). The converse is also true as $F_{\#}$ interchanges the pants and seams curves of $S_2$. Next, $\mathcal{H}$ can be seen to be an isotropic submanifold of $\Teich(S_2)$ as using Wolpert's magic formula (see \cite[Section 10.6.5]{primer}), on $\mathcal{H}$, the standard symplectic form $d\omega=0$. Now, the second assertion follows from counting the dimension of $\mathcal{H}$.

\item From the second assertion, it follows directly that $F_{\#}(\mathcal{H}_2^1) \subset \mathcal{H}_2^1$. Due to the properties of right-angled hexagons (see \cite[Theorem 2.4.1 (i)]{PBuserBook}), the converse also holds.

\item It is straight forward to check that $f(x_2)=x_2$ where $x_2=\ln(7+2\sqrt{3})$. Now the uniqueness of the fixed point of $F_{\#}$ concludes the claim.

\item Consider $\xi:=(y, y, y, 0, 0, 0), \chi:=(z, z, z, 0, 0, 0) \in \mathcal{H}_0$ where $y=2\ln(\frac{3+\sqrt{5}}{2})$, and $z=2\ln(3 + 2\sqrt{2})$. Then it is easy to see that $\chi=F_2(\xi)$, and $\| \xi - m_2 \| \neq \| \chi - m_2 \|$, where $\|\cdot\|$ denotes the usual Euclidean norm. This completes the proof.
\end{enumerate}
\end{proof}  

\subsection{The general case: periodic mapping class of highest possible order $4g+2$ in $\mathrm{Mod}(S_g)$.}
In this section, we study the irreducible Type 1 action $F_g$ of order $4g+2$ on $S_g$ for $g \geq 3$, and understand properties of the isometric action $F_{\#}$ induced by $F_g$ on $\Teich(S_g)$ in analogy with Theorem \ref{thm: 10-order induced action} and Corollary \ref{cor: order-10}. 

We begin with a \textit{pants decomposition} $\mathcal{P}_g=\{\gamma_1, \gamma_2,\dots, \gamma_{3g-3}\}$ of $S_g$ together with the corresponding collection of \textit{seams curves} denoted by $\mathcal{C}_g=\{c_1, c_2,\dots, c_{g+1}\}$ (see Figure \ref{Fig. 4g+2 order action on Sg}). Let $\Gamma_g:=\{\gamma_1, c_1, \gamma_2, c_2, \gamma_5, c_3, \dots, \gamma_{3g-4}, c_g\}$ be a filling chain collection on $S_g$. Then $F_g= T_{\gamma_1}T_{c_1} \cdots T_{\gamma_{3g-4}}T_{c_g}$, the chain map along the filling chain $\Gamma_g$, represents the irreducible Type 1 mapping class of order $4g+2$ in $\Mod(S_g)$. For notational convenience, we denote $\beta_i := F_g(\gamma_i)$, for $i=1, \cdots, 3g-3$; $\text{Hor}(\mathcal{P}_g) := \left(\Gamma_g \cap \mathcal{P}_g\right) \cup \{\gamma_{3g-3}\} = \{\gamma_1, \gamma_2, \gamma_{2+3k}, \gamma_{3g-3} \mid 1 \leq k \leq g-2\}$, the \textit{horizontal pants curves}; and $\text{Ver}(\mathcal{P}_g) := \mathcal{P}_g \setminus \text{Hor}(\mathcal{P}_g) = \{ \gamma_{3k}, \gamma_{3k+1} \mid 1 \leq k \leq g-2\} $, the \textit{vertical pants curves}. Then we have the following theorem:

\begin{theorem}
\label{thm. 4g+2  order action on Sg}
 The induced action $F_{\#}: \Teich(S_g) \longrightarrow \Teich(S_g)$ given by  
 $$(\gamma_1, \cdots, \gamma_{3g-3}, t_1, \cdots, t_{3g-3}) \longrightarrow (\tilde{\gamma}_1, \cdots, \tilde{\gamma}_{3g-3}, \Tilde{t}_1, \cdots, \tilde{t}_{3g-3}),$$
has the following properties:
\begin{enumerate}[(i)]
\item If $t_i=0 \; \forall \; i$, then $\tilde{t}_i=0 \; \forall \; i \text{ where } \gamma_i \in \text{Hor}(\mathcal{P}_g)$.

\item
\begin{enumerate} [(a)]

\item If $t_i=0 \; \forall \; i$, and for $4 < j < 3g-6$, if $\gamma_3=\gamma_4, \gamma_6=\gamma_7, \cdots, \gamma_j=\gamma_{j+1} \in \text{Ver}(\mathcal{P}_g)$, then $\tilde{t}_i=0 \; \forall \; i \text{ such that } \gamma_i \in \mathcal{P}_g \setminus \{ \gamma_k \in \text{Ver}(\mathcal{P}_g) \mid k \geq j\}$, and $\tilde{\gamma}_3=\tilde{\gamma}_4, \tilde{\gamma}_6=\tilde{\gamma}_7, \cdots, \tilde{\gamma}_{j-3}=\tilde{\gamma}_{j-2} \in \text{Ver}(\mathcal{P}_g)$.

\item  If $t_i=0 \; \forall \; i$, and if $\gamma_3=\gamma_4, \gamma_6=\gamma_7, \cdots, \gamma_{3g-6}=\gamma_{3g-5} \in \text{Ver}(\mathcal{P}_g)$, then $\tilde{t}_i=0 \; \forall \; i$.
Moreover, we have $\tilde{\gamma}_3=\tilde{\gamma}_4, \tilde{\gamma}_6=\tilde{\gamma}_7, \cdots, \tilde{\gamma}_{3g-6}=\tilde{\gamma}_{3g-5} \in \text{Ver}(\mathcal{P}_g)$ and

$$\mathcal{H}_g^{2g-1}:=\{ (x_1, \cdots, x_{3g-3}, 0, \cdots, 0) \mid x_{3k}=x_{3k+1}, 1 \leq k \leq g-2\} \subset \Teich(S_g)$$

is an $F_{\#}$-invariant submanifold of $\Teich(S_g)$ of dimension $2g-1$.
\end{enumerate} 
\item $\mathcal{H}_g^1:=\{(x, \cdots, x, 0, \cdots, 0) | x>0\}$ is not an $F_{\#}$-invariant subspace of $\Teich(S_g)$.

\item For $g\geq 2$, the unique fixed point $m_g$ of $F_{\#}$ is given in terms of the Fenchel-Nielsen coordinates, by $m_g = (\gamma_1, \gamma_2, \dots, \gamma_{3g-3}, t_1, t_2, \dots, t_{3g-3}),$
where the twist parameters $t_i = 0$ for all $1 \le i \le 3g-3$, and the length parameters $\gamma_i$ are defined by the palindromic sequence:
\begin{equation*}
    \gamma_{3j} = \gamma_{3j+1} = \gamma_{3g-3-3j} = \gamma_{3g-2-3j} =2\operatorname{arcosh}\left(Y_j(g)\right) \quad \text{for } 1 \le j \le \left\lfloor \frac{g-1}{2} \right\rfloor,
\end{equation*}
and all the remaining length coordinates $\gamma_i$ are given by
\begin{equation*}
    \gamma_i = 2\operatorname{arcosh}\left(X(g)\right),
\end{equation*}
where $X, Y_j$ are functions of $g$ as follows
\begin{align*}
    X(g) &= 1 + 2\cos\left(\frac{\pi}{g+1}\right), \\
    Y_j(g) &= \frac{\cos\left(\frac{\pi}{g+1}\right) - \cos\left(\frac{2j+2}{g+1}\pi\right)}{1 - \cos\left(\frac{\pi}{g+1}\right)}.
\end{align*}
\end{enumerate}
\end{theorem}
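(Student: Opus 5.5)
Following the strategy of Corollary \ref{cor: order-10}, the plan is to use that $F_g$ carries the pants curves of $\mathcal{P}_g$ to curves that are, on zero-twist surfaces, unions of common perpendiculars, i.e.\ seam-type curves. On a surface with all $t_i=0$, each pair of pants $P$ carries a reflection symmetry: it is the double of a right-angled hexagon, and the seams are geodesics meeting the cuffs orthogonally. Hence the whole surface $(S_g,\xi)$ with zero twists admits an orientation-reversing isometric involution $\sigma$ fixing all seam geodesics and all pants geodesics setwise.

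\textbf{Parts (i) and (ii).} For (i), we first identify $\beta_i=F_g(\gamma_i)$ for $\gamma_i\in \mathrm{Hor}(\mathcal{P}_g)$. As in genus $2$, these should be the seams $c_k$ (up to isotopy), which are $\sigma$-invariant geodesics. Their mutual common perpendiculars are then also $\sigma$-invariant, so their feet coincide, and the twist vanishes; this is exactly the argument of Corollary \ref{cor: order-10}(i).

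For the vertical curves, $\beta_{3k},\beta_{3k+1}$ are curves crossing two consecutive vertical pants curves. Their geodesic representatives are $\sigma$-invariant only if an extra left--right symmetry exists, namely the hypothesis $\gamma_{3k}=\gamma_{3k+1}$, which yields a second isometric reflection swapping the two sides. Under that symmetry the pairs $\beta_{3k},\beta_{3k+1}$ are exchanged isometrically, giving $\tilde\gamma_{3k}=\tilde\gamma_{3k+1}$, and the common perpendiculars are forced to align, giving zero twist. The index shift in (ii)(a) (equal lengths up to $j$ yielding conclusions up to $j-3$) reflects that $\beta$ for the block $k$ involves the block $k+1$, and I would verify this by tracking the chain $\Gamma_g$ in Figure \ref{Fig. 4g+2 order action on Sg}. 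For (ii)(b), all blocks are symmetric, so $F_\#(\mathcal H_g^{2g-1})\subset\mathcal H_g^{2g-1}$. Equality follows by applying the same argument to $F_g^{-1}$, which is also a chain map. The dimension count is $(3g-3)-(g-2)=2g-1$.

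\textbf{Part (iii).} The lengths $\tilde\gamma_i$ on $\mathcal H_g^1$ are computed via right-angled hexagon formulas (\cite[Theorem 2.4.1]{PBuserBook}). Horizontal images are seams crossing pants whose third cuff is horizontal or vertical, depending on position. This produces at least two distinct functions of $x$: for example $c_1$, with neighbours $\gamma_1,\gamma_2,\gamma_3$, versus a middle seam crossing four curves, whose length is a sum of two hexagon perpendiculars computed in different pants. Exhibiting one $x$ where these differ suffices to show $F_\#(\mathcal H_g^1)\not\subset\mathcal H_g^1$.

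\textbf{Part (iv).} By uniqueness of the fixed point (from \cite{Realization1,Realization2}) and the invariance in (ii)(b), $m_g$ lies in $\mathcal H_g^{2g-1}$. The fixed-point equations $\tilde\gamma_i=\gamma_i$ reduce, via hexagon formulas, to a recursion along the chain. Setting $u=\cosh(\gamma_{\rm hor}/2)$ and $v_j=\cosh(\gamma_{3j}/2)$, each seam equation has the form $u=(u\,v+w)/(\sinh\cdot\sinh)$, and these become a three-term linear recurrence in suitable variables. Its characteristic equation should produce $\cos(\pi/(g+1))$, with the palindromic boundary conditions at both ends of the chain giving the Chebyshev-type solution
\[
Y_j=\frac{\cos\frac{\pi}{g+1}-\cos\frac{(2j+2)\pi}{g+1}}{1-\cos\frac{\pi}{g+1}}.
\]

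\textbf{Main obstacle.} The main obstacle is deriving and linearizing this recurrence. I would first check small $g$, namely $g=2$, where $X=2$ matches Corollary \ref{cor: order-10}, and $g=3$, to guess the substitution. Then I would verify that the stated $X,Y_j$ satisfy all equations, rather than solving the recurrence from scratch, and finally invoke uniqueness.
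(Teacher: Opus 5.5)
The treatment of (iii) is fine and is the paper's own argument: at a zero-twist point with all lengths $x$, $\ell(c_1)=f(x,x,x)$, while a middle seam, made of four perpendicular arcs, has length $2f(x,x,x)$. The genuine gap lies in every step that asserts vanishing twists, namely (i), (ii)(b) and the twist part of (iv).

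In (iv), you claim that uniqueness of the fixed point plus invariance of $\mathcal H_g^{2g-1}$ puts $m_g$ in $\mathcal H_g^{2g-1}$. This does not follow. An $F_\#$-invariant subset need not contain the unique fixed point unless you show that $F_\#|_{\mathcal H_g^{2g-1}}$ has a fixed point. The paper does not prove this either; it simply writes ``assuming that $m_g$ has vanishing twist parameters''.

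In (i)/(ii), $\sigma$-symmetry of the geodesics $\beta_i$ and of their common perpendiculars only makes the feet coincide. But $\tilde t_i$ is measured along the image seams $F_g(c_j)$, and these are not all $\sigma$-invariant; for example $F_g(c_g)$, the curve closing the chain into a $(2g+1)$-cycle, is not. So the twist is pinned down only up to whole twists.

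These gaps cannot be closed, because the zero-twist claims are false. Write $a_1,\dots,a_{2g}$ for $\gamma_1,c_1,\gamma_2,\dots,c_g$. Since $\sigma$ reverses orientation and preserves each $a_i$, we have $\sigma T_{a_i}\sigma^{-1}=T_{a_i}^{-1}$, hence
\[
F_g\sigma F_g^{-1}\sigma^{-1}=T_{a_1}\cdots T_{a_{2g-1}}T_{a_{2g}}^{2}T_{a_{2g-1}}\cdots T_{a_1}.
\]
Under $s_i\mapsto T_{a_i}$, this is the image of the braid $s_1\cdots s_{2g}^2\cdots s_1=\Delta_{2..2g+1}^{-2}\Delta_{2g+1}^{2}$. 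Here $\Delta_{2g+1}^2=(s_1\cdots s_{2g})^{2g+1}$ maps to $F_g^{2g+1}=\iota$. By the chain relation, $(T_{a_2}\cdots T_{a_{2g}})^{2g}=T_d^{2}$, where both boundary curves of a neighbourhood of $a_2\cup\dots\cup a_{2g}$ are isotopic to one essential curve $d$. So $F_g\sigma F_g^{-1}\sigma^{-1}=T_d^{-2}\iota$. This has infinite order, since $\iota$ has finite order and commutes with $T_d$. Reversing the chain gives the same conclusion for $F_g^{-1}\sigma F_g\sigma^{-1}$.

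Let $\mathcal Z$ be the zero-twist locus; it lies in $\mathrm{Fix}(\sigma_\#)$. A point of $\mathcal Z$ whose $F_\#$-image is also in $\mathcal Z$ would be fixed by $\sigma$ and by a conjugate $F_g^{\mp1}\sigma F_g^{\pm1}$. It would then be fixed by an infinite-order mapping class, contradicting proper discontinuity. Hence $F_\#(\mathcal Z)\cap\mathcal Z=\emptyset$. In particular, $\mathcal H_g^{2g-1}$ is not invariant, and $m_g=F_\#(m_g)$ cannot have vanishing twists.

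Consequently the hexagon system for $x,y_j$, which presupposes seams orthogonal to the pants curves, does not describe $m_g$. Your plan of checking $X(g),Y_j(g)$ against that system and invoking uniqueness therefore cannot succeed. For $g=2$ you can see this directly. The point $(x_0,x_0,x_0,0,0,0)$ with $\cosh(x_0/2)=2$ is glued from four regular right-angled hexagons, so it has an isometry of order $3$. The genus-$2$ surface with an order-$10$ symmetry is $y^2=x^5-1$, whose automorphism group is $\mathbb Z/10$ and has no element of order $3$.
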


\begin{proof}
\begin{enumerate}[(i)]
\item We begin by noting that under the action, $F_g(\mathcal{P}_2) = \{\beta_i\}$ is a pants decomposition of $S_2$ with $\text{Hor}(\mathcal{P}_g)$ as the seams curves, and we have, up to isotopy, $c_1=\beta_1$, $c_i=\beta_{3i-4}$ for $2 \leq i \leq g-1$, $c_g=\beta_{3g-4}$, and $c_{g+1}=\beta_{3g-3}$. Let us assume $t_i=0$ for all $i$, then all the $[c_i]$ can be assumed to be closed geodesics intersecting the pants curves $[\gamma_j]$ at right angles whenever they intersect. This follows from the property of right-angled hyperbolic hexagons that each $[c_i]$ bisects the pants curves $[\gamma_j]$ whenever they intersect. Using an argument similar to the one in Corollary \ref{cor: order-10}, the claim follows. 

\begin{figure}[h]
     \centering
        \includegraphics[width=0.85\textwidth]{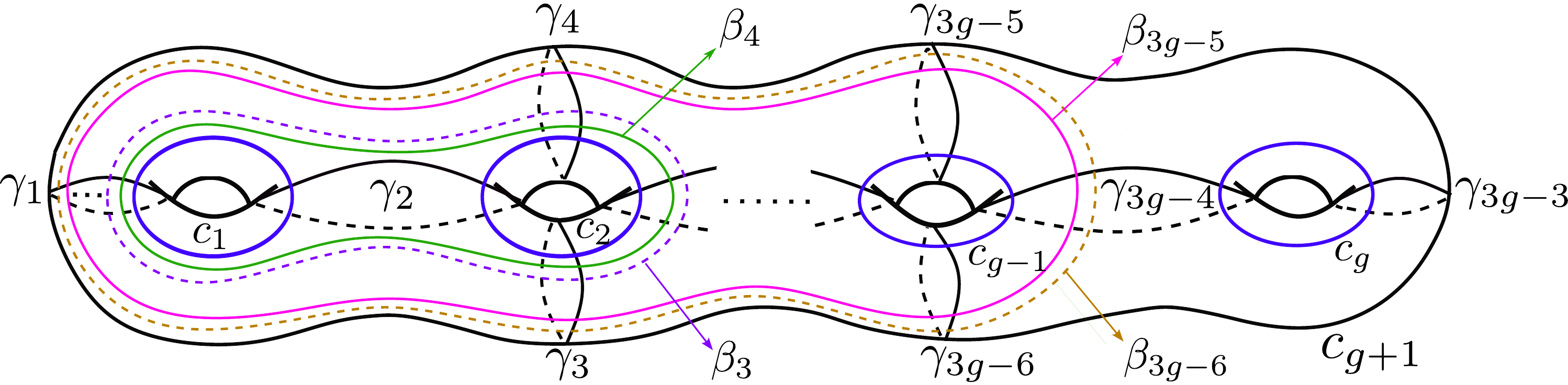}
         \caption{$4g+2$ order action on $S_g$.}
         \label{Fig. 4g+2 order action on Sg}
\end{figure}

\item From the hypothesis, it follows from (i) that $\tilde{t}_i=0$ for all $i$ such that $\gamma_i \in \text{Hor}(\mathcal{P}_g)$, and each $[c_k]$ perpendicularly bisects the pants curves $[\gamma_j]$ whenever they meet. In particular, in the $g=3$ case, we have $\gamma_3=\gamma_4$ and consequently, the curve $[c_1]$ is bisected by $[\gamma_1]$ and $[\gamma_2]$, and $[c_2]$ is bisected by $[\gamma_2]$ and $[\gamma_5]$ (using \cite[Theorem 2.4.1 (i)]{PBuserBook}). Now, consider the two common perpendiculars between $[\gamma_1]$, and $[\gamma_5]$ along the curve $\beta_4$, that form two right-angled hexagons whose sides are subarcs of $\gamma_1$, $c_1$, $\gamma_2$, $c_2$, $\gamma_5$, and $\beta_4$. As both the hexagons share three consecutive sides of lengths $\frac{c_1}{2}$, $\frac{\gamma_2}{2}$, and $\frac{c_2}{2}$, they are congruent (using \cite[Theorem 2.4.1]{PBuserBook}), and hence the two common perpendiculars along $\beta_4$, of the same length, form a closed geodesic intersecting $[\gamma_1]$ and $[\gamma_5]$ perpendicularly. Thus, we can assume $\beta_4$ to be the closed geodesic $[\beta_4]$ perpendicularly intersecting $[\gamma_1]$ and $[\gamma_5]$. As a seams curve $[\gamma_1]$ perpendicularly intersects $[c_1]$, $[\beta_4]$, and $[c_4]$ (similarly, the seams curve $[\gamma_5]$ perpendicularly intersects $[c_2]$, $[\beta_4]$, and $[c_3]$), proving that $\tilde{t}_4=0$. Likewise, we can also show that $\tilde{t}_3=0$. Using \cite[Theorem 2.4.1 (i)]{PBuserBook}, it follows that the lengths satisfy the following relation 
$$\tilde{\gamma}_3=\tilde{\gamma}_4=2\operatorname{arcosh}\left( \sinh{\frac{c_1}{2}}\sinh{\frac{c_2}{2}}\cosh{\frac{\gamma_2}{2}} - \cosh{\frac{c_1}{2}}\cosh{\frac{c_2}{2}} \right).$$

We first consider the $g=6$ case. In this case, we have $\gamma_3=\gamma_4$, and $\gamma_6=\gamma_7$. Consequently, $[c_1]$ is bisected by $[\gamma_1]$ and $[\gamma_2]$, and $[c_2]$ is bisected by $[\gamma_2]$ and $[\gamma_5]$. An argument similar to the case of $g=3$, proves that $\tilde{t}_3=\tilde{t}_4=0$, and $\tilde{\gamma}_3=\tilde{\gamma}_4$.

Assuming $\gamma_3=\gamma_4$, $\gamma_6=\gamma_7$, and $\gamma_9=\gamma_{10}$, it suffices to show that $\tilde{t}_6=\tilde{t}_7=0$, and $\tilde{\gamma}_6=\tilde{\gamma}_7$. Like $[c_1]$, and $[c_2]$, the curve $[c_3]$ also gets bisected (by $\gamma_5$ and $\gamma_8$). As before, consider the two common perpendiculars between $[\gamma_1]$, and $[\gamma_8]$ along the curve $\beta_7$, that form two congruent right-angled hexagons whose sides are sub-arcs of $\gamma_1$, $\beta_4$, $\gamma_5$, $c_3$, $\gamma_8$, and $\beta_7$, giving us $\tilde{t}_6=\tilde{t}_7=0$, and $\tilde{\gamma}_6=\tilde{\gamma}_7$. The assertion in the second part follows using similar methods as above. 

\item Given $x, y, z >0$, define $f(x,y,z):=2\operatorname{arcosh}\left( \frac{\cosh{\frac{x}{2}}\cosh{\frac{y}{2}} + \cosh{\frac{z}{2}}}{\sinh{\frac{x}{2}}\sinh{\frac{y}{2}}} \right)$, and call $m_x:=(x, \cdots, x, 0, \cdots, 0) \in \Teich(S_g)$. Under the action of $F_{\#}$ on $m_x$, we obtain $\tilde{\gamma_1}=c_1=f(x, x, x)$, and $\tilde{\gamma_2}=c_2=2f(x, x, x)$. Thus, $c_1=c_2$  would imply that $f(x, x, x)=0$, which is a contradiction, establishing the claim.

\item Restricting to the case of $g\geq 3$, let $m_g = (\gamma_1, \gamma_2, \dots, \gamma_{3g-3}, t_1, t_2, \dots, t_{3g-3}) \in \Teich(S_g)$ denote the unique fixed point of $F_{\#}$. Then $F_g$ is a Riemannian isometry of the hyperbolic surface $(S_g, m_g)$. Also, $F_g$ is a chain map and its action on the chain collection $\{\left(\gamma_1, c_1, \cdots, \gamma_{3g-4}, c_g\right)\}$ sends each curve to the next one, except the final curve $c_g$, which gets mapped to the curve $\gamma_{3g-3}$. Therefore, all the horizontal pants curves lie in a single orbit of the action, and they must have the same length, say, $x$. Next, consider the isometry $F_g^{2g+1}$ of $(S_g, m_g)$, an involution acting as a $\pi$-rotation of the surface about the axis passing through all genera of the surface. Then, each pair of vertical pants curves $\{(\gamma_{3j}, \gamma_{3j+1})\}$ (as in the hypothesis) around each genus belongs to the same orbit, and thus have the same length, say, $y_j=\gamma_{3j} = \gamma_{3j+1}$, for $1 \le j \le \left\lfloor \frac{g-1}{2} \right\rfloor$. Assuming that $m_g$ has vanishing twist parameters, similar to the case $g=2$, we note that $m_g \in \mathcal{H}_g^{2g-1}$. 
We would like to claim that $\gamma_{3j} = \gamma_{3j+1} = \gamma_{3g-3-3j} = \gamma_{3g-2-3j} (=y_j)$, for $1 \le j \le \left\lfloor \frac{g-1}{2} \right\rfloor$. To prove it for $j=1$, we consider the two isometric right-angled hexagons (one is bounded by $\gamma_1, c_1, \gamma_2, c_2, \gamma_3, c_{g+1}$, and the other one is by $\gamma_{3g-3}, c_g, \gamma_{3g-4}, c_{g-1}, \gamma_{3g-6}, c_{g+1}$) on $S_g$ (see Figure \ref{Fig. 4g+2 order action on Sg}), having three consecutive sides of  equal lengths given by $\frac{\gamma_1}{2}=\frac{c_1}{2}=\frac{\gamma_2}{2}=\frac{x}{2}=\frac{\gamma_{3g-4}}{2}=\frac{c_g}{2}=\frac{\gamma_{3g-3}}{2}$. Therefore, we get $y_1=\gamma_3=\gamma_{3g-6}=g(x, x, x)$ (using \cite[Theorem 2.4.1]{PBuserBook})), where $g(a, b, c)=2\operatorname{arcosh}\left( \sinh{\frac{a}{2}}\sinh{\frac{b}{2}}\cosh{\frac{c}{2}} - \cosh{\frac{a}{2}}\cosh{\frac{b}{2}} \right)$, for $a, b, c>0$. Similarly, we can prove the claim for $j=2$ by noting that the congruence of the aforementioned right-angled hexagons forces the respective adjacent two right-angled hexagons to also be congruent. Continuing like this, the claim follows for all $j$ thereby establishing
\begin{align}
y_j &= \gamma_{3j}=\gamma_{3j+1},
& 1\le j\le \left\lfloor \frac{g-1}{2}\right\rfloor,
\label{eq: y_j}\\
x &= \gamma_i,
& \text{for all other }
i\in\{1,2,\dots,3g-3\}.
\label{eq: x}
\end{align}
With respect to $m_g$, under the isometric action of $F_g$, all the chain curves have the same length equal to $x$. Thus, $x=\gamma_1=c_1, x=\gamma_2=c_2, \cdots, x=\gamma_{3g-4}=c_g$. By using two adjacent right-angled hexagons with a common side length given by $\frac{\gamma_{3j}}{2}$, we can easily determine the lengths of the corresponding chain curve $c_{j+1}$. Therefore, using \cite[Theorem 2.4.1]{PBuserBook}, we have the following two cases:
\begin{itemize}
\item Case (i): $g=2k$. $m_g$ is given by 
$$m_{2k}=(x, x, y_1, y_1, x, \cdots, y_{k-1}, y_{k-1}, x, y_{k-1}, y_{k-1}, x, \cdots, y_1, y_1, x, x, 0, \cdots, 0),$$
where $x, y_j$'s satisfy the following system of equations:
\begin{equation}\label{eq:even1}
\begin{split}
x &= f(x, x, y_1) \Leftrightarrow y_1=g(x, x, x),\\
x &= f(y_1, x, x) + f(y_1, x, y_2),\\
x &= f(y_2, x, y_1) + f(y_2, x, y_3),\\
x &= f(y_3, x, y_2) + f(y_3, x, y_4),\\
\vdots \\
x &= f(y_{k-2}, x, y_{k-3}) + f(y_{k-2}, x, y_{k-1}),\\
x &= f(y_{k-1}, x, y_{k-2}) + f(y_{k-1}, x, y_{k-1}),
\end{split}
\end{equation}
Setting $X:=\cosh{\frac{x}{2}}$, $Y_i:=\cosh{\frac{y_i}{2}}$ and simplifying the above equations, the following system of equations is obtained:
\begin{equation}\label{eq:even2}
\begin{split}
Y_1 &= X^3 - X^2 - X,\\
Y_2 &= X^5 - 3X^4 + 4X^2 - 1,\\
Y_j &= (X^2 - X)Y_{j-1} + XY_{j-2} - \sqrt{(X^2-1)(Y_{j-1}^2 + Y_{j-2}^2 + X^2 + 2XY_{j-1}Y_{j-2} - 1)}, 
\end{split}
\end{equation}
for $3\leq j \leq k$ with $Y_k:=Y_{k-1}$.
To linearise the system of equations \ref{eq:even2}, we define an auxiliary sequence $R_n$  as follows:
\begin{equation}
\label{eq. recurrence relation}
R_{-1} = -1, \ R_0 = 1, \ R_{n+1} = (X - 1)R_n - R_{n-1}.
\end{equation}
Consequently, we have $R_1 = X, \ R_2 = X^2 - X - 1, \ \text{ and } \ R_3 = X^3 - 2X^2 - X + 1$, which yields $R_1 R_2 = X^3 - X^2 - X = Y_1$ and $R_2 R_3 = X^5 - 3X^4 + 4X^2 - 1 = Y_2$. More generally, we claim the following coordinate transformation:
\begin{equation}
\label{eq. transformation Y_j=R_jR_{j+1}}
Y_j = R_j R_{j+1}.
\end{equation}
To establish this claim, we first make an observation (that follows from induction on $n$,) that for all $n \ge 0$, the sequence $R_n$ satisfies:
\begin{equation}
\label{eq. invariant X + 1}
R_n^2 + R_{n-1}^2 - (X - 1)R_n R_{n-1} = X + 1.
\end{equation}

It can be seen that while the non-linear recurrence for $Y_j$ involves the negative branch of a quadratic equation, assuming $Y_{j-1} = R_{j-1}R_j$ and $Y_{j-2} = R_{j-2}R_{j-1}$, the expression under the square root of the geometric formula can be simplified. Using the linear recurrence $(R_j + R_{j-2})^2 = (X-1)^2 R_{j-1}^2$ and a rearrangement of the invariant $R_j R_{j-2} = R_{j-1}^2 - (X+1)$, the expression reduces to
\begin{equation}
(X^2-1)^2 (R_{j-1}^2 - 1)^2.
\end{equation}
Taking the admissible branch where $Y_i > 1$, the right-hand side of the $Y_j$ recurrence relation simplifies to
\begin{equation}\label{eq:Y_j_simple}
Y_j = X(X-2)R_{j-1}R_j + (1-X)R_{j-1}^2 + (X^2-1).
\end{equation}
Multiplying Equation \ref{eq. invariant X + 1} at step $j$ by $(X-1)$ gives:
$$X^2 - 1 = (X-1)R_j^2 + (X-1)R_{j-1}^2 - (X-1)^2 R_j R_{j-1}.$$
Substituting $(X^2-1)$ by this expression into Equation \eqref{eq:Y_j_simple}, gives
\begin{align*}
Y_j &= X(X-2)R_{j-1}R_j + (1-X)R_{j-1}^2 \\
&\quad + \Big[ (X-1)R_j^2 + (X-1)R_{j-1}^2 - (X-1)^2 R_j R_{j-1} \Big].\\
 &= (X-1)R_j^2 - R_{j-1}R_j,\\ 
 &= R_j \Big[ (X-1)R_j - R_{j-1} \Big].
\end{align*}
By the defining recurrence relation, $(X-1)R_j - R_{j-1} = R_{j+1}$ and thus, $Y_j = R_j R_{j+1}$, proving the above claim.

The final equation $Y_k = Y_{k-1}$ representing the topological closing condition of the surface, translates to
\begin{equation}
R_k R_{k+1} = R_{k-1} R_k.
\end{equation}
Since $Y_k > 1$, $R_k \neq 0$, implying $R_{k+1} = R_{k-1}$. Substituting this into the recurrence $R_{k+1} = (X-1)R_k - R_{k-1}$ we have
$$2R_{k-1} = (X-1)R_k \implies R_k = \frac{2}{X-1}R_{k-1}.$$
Assuming $X \ge 3$, it follows that $\frac{2}{X-1} \le 1$, forcing $R_k \le R_{k-1}$. However, the sequence $R_n$ is strictly increasing in this range: in fact, $R_1 = X \ge 3 > 1 = R_0$. Assume $R_n > R_{n-1} > 0$. As $X-1 \ge 2$, we have:
$$R_{n+1} = (X-1)R_n - R_{n-1} \ge 2R_n - R_{n-1} > R_n.$$
By induction, $1 = R_0 < R_1 < R_2 < \dots < R_n$, which contradicts $R_k \le R_{k-1}$. This implies that the admissible geometric solution exists strictly for $1 < X < 3$. 

Parametrizing this range by setting $X = 1 + 2\cos\theta$ for $0 < \theta < \frac{\pi}{2}$, the characteristic equation of the linear recurrence (Equation \ref{eq. recurrence relation}) reduces to
$$\lambda^2 - (X-1)\lambda + 1 = 0 \implies \lambda^2 - 2\cos\theta\,\lambda + 1 = 0$$
with roots $\lambda = e^{\pm i\theta}$. The general solution is $R_n = A e^{in\theta} + B e^{-in\theta}$. Applying the initial conditions $R_0 = 1$ and $R_{-1} = -1$ yields:
$$A = \frac{e^{i\theta/2}}{2i\sin{\frac{\theta}{2}}}, \quad B = -\frac{e^{-i\theta/2}}{2i\sin{\frac{\theta}{2}}}.$$
Substituting these back into the general solution gives the explicit trigonometric formula:
\begin{equation}
\label{eq. formula of R_n in terms of theta}
R_n = \frac{e^{i(n+1/2)\theta} - e^{-i(n+1/2)\theta}}{2i\sin{\frac{\theta}{2}}} = \frac{\sin{(2n+1)\frac{\theta}{2}}}{\sin{\frac{\theta}{2}}}.
\end{equation}
To express $X$ and $Y_j$ explicitly in terms of the surface genus $g$, we resolve the fundamental angle $\theta$ from the geometric closing condition. At the final stage of this geometric configuration, the topological closure leads to the relation $R_{k+1} = R_{k-1}$. Substituting our explicit trigonometric formula for $R_n$ into this closing condition yields
\begin{equation*}
\frac{\sin{(2k+3)\frac{\theta}{2}}}{\sin{\frac{\theta}{2}}} = \frac{\sin{(2k-1)\frac{\theta}{2}}}{\sin{\frac{\theta}{2}}}.
\end{equation*}
Since $\sin{\frac{\theta}{2}} \neq 0$, we have
\begin{equation*}
\sin{\frac{(2k+3)\theta}{2}} = \sin{\frac{(2k-1)\theta}{2}}.
\end{equation*}
Therefore, we must have (for some integer $m$)
\begin{equation*}
\frac{(2k+3)\theta}{2} + \frac{(2k-1)\theta}{2} = \pi + 2m\pi \implies \theta = \frac{2m+1}{2k+1}\pi.
\end{equation*}
Substituting $\theta$ in Equation \ref{eq. formula of R_n in terms of theta}, as $R_n>0$ for $n\geq 1$, and $\sin{\frac{\theta}{2}}>0$ on $0<\theta<\frac{\pi}{2}$, we get:
\begin{equation*}
R_k = \frac{\sin{(2m+1)\frac{\pi}{2}}}{\sin{\frac{\theta}{2}}} = \frac{(-1)^m}{\sin \frac{\theta}{2}}.
\end{equation*}
Therefore, $m$ is a non-negative even integer. If possible, let $m\geq 2$. Set $A_n :=(2n+1)\frac{\theta}{2}$. Then the sequence $A_n$ starts at $A_0 =\frac{\theta}{2} < \frac{\pi}{2}$, ending at $A_k=(2k+1)\frac{\theta}{2}=(2m+1)\frac{\pi}{2} \geq \frac{5\pi}{2}$ with $A_n - A_{n-1}=\theta < \frac{\pi}{2}$. This would mean that there is some $n$ for which $\pi < A_n < 2\pi$, and $R_n=\frac{\sin{A_n}}{\sin{\frac{\theta}{2}}} <0$, a contradiction. Thus, we must have $m=0$. Consequently, since $g=2k$, we get $\theta=\frac{\pi}{2k+1}=\frac{\pi}{g+1}$.
Substituting this expression for $\theta$ into the parametrization $X = 1 + 2\cos\theta$ directly yields 
\begin{equation}
\label{eq. formula of X in terms of g}
X = 1 + 2\cos\left(\frac{\pi}{g+1}\right).
\end{equation}

It remains to establish the cosine formulation for the intermediate coordinates $Y_j$. Using $Y_j = R_j R_{j+1}$ and substituting the sine formula for both terms gives:
\begin{equation}
\label{eq. formula of Y_j in terms of theta}
Y_j = \frac{\sin{(2j+1)\frac{\theta}{2}}\sin{(2j+3)\frac{\theta}{2}}}{\sin^2{\frac{\theta}{2}}}.
\end{equation}
Applying the trigonometric identity, $\sin A \sin B = \frac{1}{2}[\cos(A-B) - \cos(A+B)]$ where $A = (2j+3)\frac{\theta}{2}$ and $B = (2j+1)\frac{\theta}{2}$, in Equation \eqref{eq. formula of Y_j in terms of theta}, we have:
\begin{equation*}
Y_j = \frac{\cos\theta - \cos\big((2j+2)\theta\big)}{1 - \cos\theta}.
\end{equation*}
Finally, substituting $\theta = \frac{\pi}{g+1}$ into the above expression yields
\begin{equation}
\label{eq. formula of Y_j in terms of g}
Y_j = \frac{\cos\left(\frac{\pi}{g+1}\right) - \cos\left(\frac{2j+2}{g+1}\pi\right)}{1 - \cos\left(\frac{\pi}{g+1}\right)}.
\end{equation}
\item Case (ii): $g=2k+1$. $m_g$ given by 
$$m_{2k+1}=(x, x, y_1, y_1, x, \cdots, y_{k-1}, y_{k-1}, x, y_k, y_k, x, y_{k-1}, y_{k-1}, x, \cdots, y_1, y_1, x, x, 0, \cdots, 0),$$
where $x, y_j$'s satisfy the following system of equations:
\begin{equation}
\begin{split}
x &= f(x, x, y_1) \Leftrightarrow y_1=g(x, x, x),\\
x &= f(y_1, x, x) + f(y_1, x, y_2),\\
x &= f(y_2, x, y_1) + f(y_2, x, y_3),\\
x &= f(y_3, x, y_2) + f(y_3, x, y_4),\\
\vdots \\
x &= f(y_{k-1}, x, y_{k-2}) + f(y_{k-1}, x, y_k),\\
x &= f(y_k, x, y_{k-1}) + f(y_k, x, y_{k-1})=2f(y_k, x, y_{k-1}),
\end{split}
\end{equation}
where for $a, b, c >0$, $f$ is given by $f(a, b, c)=2\operatorname{arcosh}\left( \frac{\cosh{\frac{a}{2}}\cosh{\frac{b}{2}} + \cosh{\frac{c}{2}}}{\sinh{\frac{a}{2}}\sinh{\frac{b}{2}}} \right)$.
Setting $X:=\cosh{\frac{x}{2}}$, $Y_i:=\cosh{\frac{y_i}{2}}$ and simplifying the above equations as in the above case, we have
\begin{equation}
\begin{split}
Y_1 &= X^3 - X^2 - X,\\
Y_2 &= X^5 - 3X^4 + 4X^2 - 1,\\
Y_j &= (X^2 - X)Y_{j-1} + XY_{j-2} - \sqrt{(X^2-1)(Y_{j-1}^2 + Y_{j-2}^2 + X^2 + 2XY_{j-1}Y_{j-2} - 1)}, 
\end{split}
\end{equation}
for $3\leq j \leq k+1$ with $Y_{k+1}:=Y_{k-1}$.
\end{itemize}
\end{enumerate}
In this case also, an analogous tracking of the boundary curves and the corresponding algebraic closing condition leads to the identical fundamental angle relation, $\theta = \frac{\pi}{g+1}$ and similar formulae for $X$ and $Y_j$. 
Finally, the $g=2$ case (see Corollary \ref{cor: order-10}) cn also be recovered from this general proof making the formulae of $X$ and $Y_j$ given Equations \ref{eq. formula of X in terms of g}, and \ref{eq. formula of Y_j in terms of g} applicable for all $g\ge 2$.
\end{proof}
In particular, for $g=3,4,5,$ and $6$, Theorem \ref{thm. 4g+2  order action on Sg}, has the following consequences.
\begin{cor} \label{cor: $g=3,4,5,6$}
Consider $F \in \Mod(S_g)$ and its induced action $F_{\#}: \Teich(S_g) \longrightarrow \Teich(S_g)$ as mentioned in Theorem \ref{thm. 4g+2  order action on Sg}. Then the following conclusions hold.
\begin{enumerate}
\item For $g=3$, $F_{\#}: \Teich(S_3) \longrightarrow \Teich(S_3)$given by 
 $$(\gamma_1, \cdots, \gamma_6, t_1, \cdots, t_6) \longrightarrow (\tilde{\gamma}_1, \cdots, \tilde{\gamma}_6, \Tilde{t}_1, \cdots, \tilde{t}_6)$$
has the following properties:
\begin{enumerate}[(i)]
\item $F_{\#}:(\gamma_1, \cdots, \gamma_6, 0, \cdots, 0) \mapsto (\tilde{\gamma}_1, \cdots, \tilde{\gamma}_6, 0, 0, \Tilde{t}_3, \Tilde{t}_4, 0, 0, 0)$

\item $\mathcal{H}_3^5:=\{(x_1, x_2, y, y, x_3, x_4, 0, \cdots, 0) \in \Teich(S_3) \ | \ x_1, \ldots, x_4,y >0\} $ and  
$\mathcal{H}_3^3:=\{(x_1, x_2, y, y, x_1, x_2, 0, \cdots, 0) \in \Teich(S_3)\ | \  x_1,x_2,y >0\}$ are two $F_{\#}$-invariant submanifolds of $\Teich(S_3)$ of dimensions $5$ and $3$ respectively.
\item For $x>0$, let $h(x):=2\operatorname{arcosh}\left( 1 + 2\cosh{\frac{x}{2}} \right)$. Then \\$\mathcal{H}_3^1:=\{(x, x, h(x), h(x), x, x, 0, \cdots, 0)\}$
is a $1$-dimensional $F_{\#}$-invariant subspace of $\Teich(S_3)$.

\item The unique fixed point of $F_3$ is given by $m_3=(x, x, y, y, x, x, 0, \cdots, 0),$ where $x=2\operatorname{arcosh}\left( 1 + \sqrt{2} \right), \text{ and } y=h(x)=2\operatorname{arcosh}\left( 3 + 2 \sqrt{2} \right)$.

\item $F_{\#}:(x, \cdots, x, 0, \cdots, 0) \mapsto \left(a(x), 2a(x), b(x), b(x), a(x), 2a(x), 0, \cdots, 0 \right)$, where for $X=\cosh{\frac{x}{2}}$, the functions $a, b$ are given by $a(x)=f(x, x, x)= 2\operatorname{arcosh}\left( \frac{X}{X - 1} \right)$, and $b(x)=g(z_1, z_2, x)= 2\operatorname{arcosh}\left( \frac{X(X^2-X+1)}{2X - 1} \right)$.
Thus, the one dimensional subspace $\{(x, \cdots, x, 0, \cdots, 0)\} \subset \Teich(S_3)$ is not $F_{\#}$-invariant.
\end{enumerate}
\item For $g=4$, the unique fixed point of $F_{\#}$ is given by 
$$m_4=(x, x, y, y, x, y, y, x, x, 0, \cdots, 0),$$
where $x=2\operatorname{arcosh}\left( \frac{3+\sqrt{5}}{2} \right), \text{ and } y=2\operatorname{arcosh}\left( 4+2\sqrt{5} \right)$.
\item For $g=5$, the unique fixed point of $F_{\#}$ is given by 
$$m_5=(x, x, y_1, y_1, x, y_2, y_2, x, y_1, y_1, x, x, 0, \cdots, 0),$$
where $x=2\operatorname{arcosh}\left(1 + \sqrt{3}\right), y_1=2\operatorname{arcosh}\left(5 + 3\sqrt{3}\right)$, and $y_2=2\operatorname{arcosh}\left(7 + 4\sqrt{3}\right)$.
\item For $g=6$, the unique fixed point of $F_{\#}$ is given by 
$$m_6=(x, x, y_1, y_1, x, y_2, y_2, x, y_2, y_2, x, y_1, y_1, x, x, 0, \cdots, 0),$$
where $x=2\operatorname{arcosh}\left(1 + 2\cos{\frac{\pi}{7}}\right), y_1=2\operatorname{arcosh}\left(\frac{1}{2}(-1 + \cot^2{\frac{\pi}{14}} + \csc{\frac{\pi}{14}})\right)$, and $y_2=2\operatorname{arcosh}\left(\cos{\frac{\pi}{7}} \csc{\frac{\pi}{14}}\right)$.
\end{enumerate}
\end{cor}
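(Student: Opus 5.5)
The plan is to obtain every item by specializing Theorem \ref{thm. 4g+2  order action on Sg}. Items (2)--(4), and the fixed point in (1)(iv), follow from part (iv) of that theorem. The remaining statements for $g=3$ follow from parts (i)--(iii) together with the right-angled hexagon formula \cite[Theorem 2.4.1]{PBuserBook}.

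For the fixed points, set $\theta=\pi/(g+1)$. I will evaluate $X=1+2\cos\theta$ and $Y_j=R_jR_{j+1}$, with $R_n=\sin((2n+1)\theta/2)/\sin(\theta/2)$, or equivalently use the cosine formula for $Y_j$. I will then place the values in the palindromic pattern: one vertical pair $y_1$ for $g=3,4$, and two values $y_1,y_2$ for $g=5,6$.

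\textbf{Case $g=3$.} Here $\theta=\pi/4$, so $X=1+\sqrt2$ and
\[
Y_1=\frac{\tfrac{\sqrt2}{2}+1}{1-\tfrac{\sqrt2}{2}}=3+2\sqrt2 .
\]
As a cross-check, $Y_1=X^3-X^2-X$.

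\textbf{Case $g=4$.} Here $\theta=\pi/5$, so $\cos\theta=(1+\sqrt5)/4$ and $X=(3+\sqrt5)/2$. I will obtain $Y_1=X(X^2-X-1)=4+2\sqrt5$ by direct algebra.

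\textbf{Case $g=5$.} Here $\theta=\pi/6$, so $X=1+\sqrt3$. The cosine formula gives $Y_1=(\tfrac{\sqrt3}{2}+\tfrac12)/(1-\tfrac{\sqrt3}{2})=5+3\sqrt3$ and $Y_2=(\tfrac{\sqrt3}{2}+1)/(1-\tfrac{\sqrt3}{2})=7+4\sqrt3$.

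\textbf{Case $g=6$.} Here $\theta=\pi/7$. I will rewrite $Y_1,Y_2$ using half-angle identities in $\pi/14$ to match the stated $\cot$ and $\csc$ expressions. I expect this trigonometric rewriting to be routine, though fiddly.

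For (1)(i), I will apply Theorem \ref{thm. 4g+2  order action on Sg}(i) with $g=3$. The horizontal curves are $\gamma_1,\gamma_2,\gamma_5,\gamma_6$, so their new twists vanish. The twists of the vertical curves $\gamma_3,\gamma_4$ are not controlled, which gives the stated shape.

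For (1)(ii), the $5$-dimensional set $\mathcal{H}_3^5$ is exactly $\mathcal{H}_3^{2g-1}$ of part (ii)(b). For the $3$-dimensional set $\mathcal{H}_3^3$, I will additionally use the involution $F_3^{7}$. It exchanges the two halves of the surface, and so forces $\tilde\gamma_1=\tilde\gamma_5$ and $\tilde\gamma_2=\tilde\gamma_6$ whenever $\gamma_1=\gamma_5$ and $\gamma_2=\gamma_6$. This holds because the seam lengths $c_1,c_3$ and $c_2,c_4$ are computed from congruent hexagons.

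For (1)(v), I will compute the images of $(x,\dots,x,0,\dots,0)$ by the formula in Corollary \ref{cor: order-10}(i):
\[
c_1=f(x,x,x)=2\operatorname{arcosh}\!\left(\frac{X^2+X}{X^2-1}\right)=2\operatorname{arcosh}\!\left(\frac{X}{X-1}\right).
\]
Then $c_2$ is the sum of two such hexagon sides, as in part (iii). The vertical lengths come from $g(c_1,c_2,x)$, simplified in the variable $X$. Non-invariance then follows because $\tilde\gamma_1\neq\tilde\gamma_2$.

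For (1)(iii), I will restrict to the family $(x,x,h(x),h(x),x,x)$. There I will compute $\tilde\gamma_1=\tilde\gamma_2$ and $\tilde\gamma_3$ via $f$ and $g$, and check that the image again has the form $(x',x',h(x'),h(x'),x',x')$. The relation to verify is the identity $\cosh(\tilde\gamma_3/2)=1+2\cosh(\tilde\gamma_1/2)$ in $X$.

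I expect this verification in (1)(iii) to be the main obstacle. The nested arcosh expressions must collapse to a polynomial or rational identity, and the sign and branch choices in $g(\cdot)$ must be tracked. I would first substitute $Y=1+2X$ into the two hexagon relations and simplify symbolically. If the identity fails off the fixed point, I would fall back on checking consistency at $m_3$, where $y=h(x)$ indeed gives $3+2\sqrt2$.
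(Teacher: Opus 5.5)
Your route matches the paper's. You read off the fixed points for $g=3,\dots,6$ from part (iv) of the order-$(4g+2)$ theorem, and you get the $g=3$ items from parts (i)--(ii) of that theorem plus the hexagon functions $f$ and $g$. The problem is that two verifications you call routine cannot be completed, because the stated values do not follow from these formulas.

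\textbf{The value of $y_2$ for $g=6$.} With $\theta=\pi/7$ you have $1-\cos\theta=2\sin^2\frac{\pi}{14}$ and $\cos 6\theta=-\cos\frac{\pi}{7}$. Hence
\[
Y_2=\frac{\cos\frac{\pi}{7}-\cos\frac{6\pi}{7}}{1-\cos\frac{\pi}{7}}=\cos\tfrac{\pi}{7}\,\csc^2\tfrac{\pi}{14}\approx 18.20 .
\]
The stated value $\cos\frac{\pi}{7}\csc\frac{\pi}{14}\approx 4.05$ is $R_2$, not $Y_2=R_2R_3$, where $R_3=\csc\frac{\pi}{14}$. It is even smaller than $Y_1\approx 11.34$, although the $Y_j$ increase toward the middle. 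No half-angle rewriting will close this gap. Your method proves $y_2=2\operatorname{arcosh}\bigl(\cos\frac{\pi}{7}\csc^2\frac{\pi}{14}\bigr)$ instead. The $y_1$ formula is fine: with $\phi=\pi/14$,
\[
\tfrac12(-1+\cot^2\phi+\csc\phi)=\frac{\cos 2\phi+\sin\phi}{2\sin^2\phi},
\]
and $\sin\phi=-\cos\frac{4\pi}{7}$.

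\textbf{The function $b(x)$ in (1)(v).} Carry out $b=g(a,2a,x)$ using $\cosh\frac a2=\frac{X}{X-1}$ and $\sinh^2\frac a2=\frac{2X-1}{(X-1)^2}$. This gives
\[
\cosh\tfrac{b}{2}=\sinh\tfrac a2\,\sinh a\,X-\cosh\tfrac a2\,\cosh a=\frac{X(3X-1)}{(X-1)^2},
\]
not $\frac{X(X^2-X+1)}{2X-1}$; at $X=2$ these are $10$ and $2$. The non-invariance conclusion survives, since it only uses $\tilde\gamma_2=2\tilde\gamma_1\neq\tilde\gamma_1$. The paper asserts both values without computation, so these are defects of the statement itself. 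Your proposal should flag them rather than promise to match them.

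\textbf{Smaller points.}
\begin{enumerate}
\item The involution $F_3^7$ is the hyperelliptic involution, the only involution in $\langle F_3\rangle$. It preserves every horizontal curve and swaps $\gamma_3,\gamma_4$. So it does not exchange $\gamma_1\leftrightarrow\gamma_5$ and does no work. Invariance of $\mathcal H_3^3$ comes from the direct hexagon computation you mention at the end, $c_1=f(x_1,x_2,y)=c_3$ and $c_2=f(x_2,y,x_1)+f(x_1,y,x_2)=c_4$, which is exactly the paper's argument.
\item In (1)(iii) the obstacle you fear does not arise. With $Y=1+2X$ one gets $Z_1=Z_2=\frac{X+1}{X-1}=:Z$, so $X=\frac{Z+1}{Z-1}$ and $W=(Z^2-1)X-Z^2=1+2Z$. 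The image is therefore $(z,z,h(z),h(z),z,z)$. Your fallback of checking only at $m_3$ would not prove invariance in any case.
\item Solving $Z=X$ here gives the $g=3$ fixed point $X=1+\sqrt2$ directly. That is how the paper proves (1)(iv), without relying on the odd-genus case of the theorem, which is only sketched there.
\end{enumerate}
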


\begin{proof} We only elaborate the proof for the $g=3$ case, as for all $g=3, 4,5,6$, the description of the fixed point follows directly from Equations \ref{eq. formula of X in terms of g}, and \ref{eq. formula of Y_j in terms of g}. For $g=3$, the  we observe the following. 
\begin{enumerate}[(i)]
\item This is a direct consequence of Theorem \ref{thm. 4g+2  order action on Sg} (part $(i)$).

\item The first part follows from Theorem \ref{thm. 4g+2  order action on Sg} (part $(ii)$).  For the second part, as $\mathcal{H}_3^3 \subset \mathcal{H}_3^5$, it follows that for any $m=(x_1, x_2, y, y, x_1, x_2, 0, \cdots, 0) \in \mathcal{H}_3^3$, we have $F_{\#}:m\mapsto (z_1, z_2, w, w, z_3, z_4, 0, \cdots, 0)$ for some $w, z_i>0$, with $i=1, \cdots, 4$. Therefore, using \cite[Theorem 2.4.1]{PBuserBook}, we have 
\begin{align}
z_1 &= c_1= (1/2)f(\gamma_1, \gamma_2,\gamma_3) + (1/2)f(\gamma_1, \gamma_2,\gamma_4) \notag\\
&= f(x_1, x_2, y) \label{eq. cor. for g=3, eq 1}\\
&= (1/2)f(\gamma_5, \gamma_6,\gamma_3) + (1/2)f(\gamma_5, \gamma_6,\gamma_4)= c_3=z_3. \notag
\end{align}
Likewise, we have
\begin{align}
z_2 &= c_2, \notag\\
&= (1/2)f(\gamma_2, \gamma_3,\gamma_1) + (1/2)f(\gamma_2, \gamma_4,\gamma_1) +  (1/2)f(\gamma_3, \gamma_5,\gamma_6) + (1/2)f(\gamma_4, \gamma_5,\gamma_6),\notag\\
&= f(\gamma_2, \gamma_3,\gamma_1) +  f(\gamma_3, \gamma_5,\gamma_6)\notag\\
&= f(x_2, y, x_1) + f(x_1, y, x_2)= f(\gamma_1, \gamma_3,\gamma_2) +  f(\gamma_6, \gamma_3,\gamma_5) \label{eq. cor. for g=3, eq 2}\\
&= (1/2)f(\gamma_1, \gamma_3,\gamma_2) + (1/2)f(\gamma_1, \gamma_4,\gamma_2) +  (1/2)f(\gamma_3, \gamma_6,\gamma_5) + (1/2)f(\gamma_4, \gamma_6,\gamma_5)\notag\\
&= c_4=z_4. \notag
\end{align}
which concludes the claim.
\item Let $m_x=(x, x, h(x), h(x), x, x, 0, \cdots, 0) \in \mathcal{H}_3^1$ with $h(x)=2\operatorname{arcosh}\left( 1 + 2\cosh{\frac{x}{2}} \right)$. Since $m_x \in \mathcal{H}_3^3$ also, from the part (ii) it follows that
\begin{equation*}
F_{\#}(m_x)=(z_1, z_2, w, w, z_1, z_2, 0, \cdots, 0), \text{ for some } z_1, z_2, w>0.
\end{equation*}
From Equations \ref{eq. cor. for g=3, eq 1}, and \ref{eq. cor. for g=3, eq 2}, we have $z_1=f(x, x, h(x))$, and $z_2=2f(x, h(x), x)$. Also, $w$, the length of $\beta_4=F_3(\gamma_4)$ (see Figure \ref{Fig. 4g+2 order action on Sg}), is given by $w=g(c_1, c_2, \gamma_2)=g(z_1, z_2, x)$ (see \cite[Theorem 2.4.1]{PBuserBook}).

Set $X = \cosh{\frac{x}{2}}$, $Y = \cosh{\frac{h(x)}{2}}$, $Z_i = \cosh{\frac{z_i}{2}}$ for $i=1, 2$, and $W = \cosh{\frac{w}{2}}$. Then for $Y = 1 + 2X$, the coordinate components $Z_1$ and $Z_2$ are given by
\begin{align*}
    Z_1 &= \frac{X^2 + Y}{X^2 - 1} = \frac{X^2 + 2X + 1}{X^2 - 1} = \frac{X+1}{X-1} \\
    Z_2 &= \frac{2X^2(Y+1)}{(X^2-1)(Y-1)} - 1 = \frac{4X^2(X+1)}{2X(X^2-1)} - 1 = \frac{X+1}{X-1} = Z_1
\end{align*}
Thus, $z_1=z_2=z$ (say). Call $Z = \cosh{\frac{z}{2}}$. Then we have $Z = \frac{X+1}{X-1}$, or equivalently, $ X = \frac{Z+1}{Z-1}$. Hence $w = g(z,z,x)$ takes the form
\begin{equation*}
    W= \sinh^2{\frac{z}{2}} X - Z^2= (Z^2 - 1)\left( \frac{Z+1}{Z-1} \right) - Z^2= (Z+1)^2 - Z^2= 1 + 2Z.
\end{equation*}
Therefore, we get $w=h(z)$, which completes the proof.

\item From part $(iii)$ above, we get $Z=\frac{X+1}{X-1}$. Thus, solving $z=x$, equivalently, $X=Z=\frac{X+1}{X-1}$ gives the solution $X=1 + \sqrt{2}$. Thus $x=2\operatorname{arcosh}\left(1 + \sqrt{2}\right)$, and  $y=h(x)=1+2X=3+2\sqrt{2}.$

\item Clearly, we have $F_{\#}(x, \cdots, x, 0, \cdots, 0) = (z_1, z_2, w, w, z_1, z_2, 0, \cdots, 0)$ (using second part of $(ii)$ above), where $z_1=f(x, x, x)$ (from Equation \ref{eq. cor. for g=3, eq 1}), $z_2=2f(x, x, x)$ (from Equation \ref{eq. cor. for g=3, eq 2}), and $w=g(z_1, z_2, x)$ (from part $(iii)$). For $X=\cosh{\frac{x}{2}}$, it can be easily checked that $a(x)=f(x, x, x)= 2\operatorname{arcosh}\left( \frac{X}{X - 1} \right)$, and $b(x)=g(z_1, z_2, x)= 2\operatorname{arcosh}\left( \frac{X(X^2-X+1)}{2X - 1} \right)$. As $a(x) \neq 0$, the $\{(x, \cdots, x, 0, \cdots, 0)\} \subset \Teich(S_3)$ can not be an invariant subspace of $F_3$. 
\end{enumerate}
\end{proof}

The following consequence of Theorem \ref{thm. 4g+2  order action on Sg} highlights the asymptotic behaviour of the unique fixed point $m_g$.
\begin{cor}\label{thm:asymptotic-limit}
 Consider $F_g \in \Mod(S_g)$, its induced action $F_{\#}: \Teich(S_g) \longrightarrow \Teich(S_g)$, and the unique fixed point $m_g \in \Teich(S_g)$ of $F_{\#}$ as mentioned in Theorem \ref{thm. 4g+2  order action on Sg}. As $g \to \infty$, the Fenchel-Nielsen length coordinates $X(g)$ and $Y_j(g)$ of $m_g$ converge point-wise to universal integer values that are independent of $g$ as follows
\begin{align}
    \lim_{g \to \infty} X(g) &= 3,\notag \\
    \lim_{g \to \infty} Y_j(g) &= (2j+1)(2j+3) = 4j^2 + 8j + 3.\notag
\end{align}
\end{cor}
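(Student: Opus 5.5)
The plan is to take the closed forms from Theorem \ref{thm. 4g+2  order action on Sg}(iv), namely Equations \ref{eq. formula of X in terms of g} and \ref{eq. formula of Y_j in terms of g}, and compute the limits directly. Throughout, we write $\theta=\theta_g:=\frac{\pi}{g+1}$, which tends to $0$ as $g\to\infty$. The key observation is that the limits are computed for each fixed $j$. This makes sense because for $g$ large, $j\le\lfloor\frac{g-1}{2}\rfloor$ holds eventually, so $Y_j(g)$ is defined for all sufficiently large $g$. This is the sense of ``point-wise'' in the statement.

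For $X$, continuity of cosine gives $X(g)=1+2\cos\theta_g\to 1+2=3$ immediately.

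For $Y_j$, both the numerator $\cos\theta-\cos((2j+2)\theta)$ and the denominator $1-\cos\theta$ tend to $0$, so the main step is resolving this $0/0$ form. I would not use a Taylor expansion. Instead I would go back to the product form from Equation \ref{eq. formula of Y_j in terms of theta}:
\[
Y_j=\frac{\sin\frac{(2j+1)\theta}{2}}{\sin\frac{\theta}{2}}\cdot\frac{\sin\frac{(2j+3)\theta}{2}}{\sin\frac{\theta}{2}}.
\]
This is equivalently $R_jR_{j+1}$ with $R_n$ as in Equation \ref{eq. formula of R_n in terms of theta}. Using the standard limit $\frac{\sin(a u)}{\sin u}\to a$ as $u\to0$, the two factors tend to $2j+1$ and $2j+3$ respectively, so $Y_j(g)\to(2j+1)(2j+3)=4j^2+8j+3$.

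As a cross-check, the recurrence in Equation \ref{eq. recurrence relation} at $X=3$ reads $R_{n+1}=2R_n-R_{n-1}$ with $R_{-1}=-1$ and $R_0=1$. Its solution is $R_n=2n+1$, since the coefficients of the recurrence are polynomial in $X$ and so $R_n$ depends continuously on $X$. This gives the same product $(2j+1)(2j+3)$.

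Nothing here is a serious obstacle; the only care needed is in the $0/0$ limit for $Y_j$, which the product form handles cleanly. It is also worth remarking that the limits are not uniform in $j$: for $j$ near $g/2$, $Y_j(g)$ grows like $g^2$, so the convergence is genuinely pointwise in $j$. The integrality of the limits is then clear, since $3$ and $(2j+1)(2j+3)$ are integers.
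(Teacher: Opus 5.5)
Your proof is correct. For $X(g)$ you use the same continuity argument as the paper; the paper's text has a slip there, writing $X_g\to 0$ where $X_g\to 3$ is meant.

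For $Y_j(g)$ your route is different. The paper works with the cosine-difference quotient $\frac{\cos\theta_g-\cos((2j+2)\theta_g)}{1-\cos\theta_g}$, which is the form stated in the theorem. It resolves the $0/0$ form by a second-order Taylor expansion, reaching $\frac{(2j+1)(2j+3)+\mathcal{O}(\theta_g^2)}{1+\mathcal{O}(\theta_g^2)}$. You instead step back to the sine-product form $R_jR_{j+1}$ and apply $\frac{\sin(au)}{\sin u}\to a$ to each factor. This needs no error-term bookkeeping, and it shows that each of $2j+1$ and $2j+3$ is the limit of a single $R_n$. The paper's version has the small advantage of using only the final formula in $g$, not the intermediate product structure.

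Your ``cross-check'' is in fact a complete proof on its own, and the most conceptual one. Each $R_n$ is a polynomial in $X$ with integer coefficients, and $Y_j(g)=R_j(X(g))\,R_{j+1}(X(g))$. So the limit is just evaluation at $X=3$, where the recurrence becomes $R_{n+1}=2R_n-R_{n-1}$ with solution $R_n=2n+1$. This avoids the indeterminate form entirely and explains why the limits are integers.

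One phrasing point: $R_n(3)=2n+1$ follows by induction from the recurrence, not from continuity. Continuity in $X$ is what lets you pass from $R_n(X(g))$ to $R_n(3)$, so the ``since'' in your sentence joins two separate facts.

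Your remark that the convergence is not uniform in $j$ is correct and is not made in the paper. For $j=\lfloor\frac{g-1}{2}\rfloor$, the numerator tends to $2$ while $1-\cos\theta_g\approx\frac{\pi^2}{2(g+1)^2}$, so $Y_j(g)$ is of order $g^2$.
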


\begin{proof}
The limiting behaviour follows from the continuity of the trigonometric parametrizations of $X(g)$ and $Y_j(g)$ since the fundamental angle $\theta_g = \frac{\pi}{g+1} \to 0$ (see Theorem \ref{thm. 4g+2  order action on Sg}, part $(iv)$ for details) as $g\to \infty$ which immediately implies that $X_g \to 0$ (using the expression of $X(g)$).

For coordinates $Y_j(g)$, the limit initially takes an indeterminate form, which we resolve via Taylor series expansions. Recall that $Y_j(g)$ in terms of $\theta_g$ is given by
\begin{equation*}
    Y_j(g) = \frac{\cos\theta_g - \cos\big((2j+2)\theta_g\big)}{1 - \cos\theta_g}.
\end{equation*}
Using the standard Taylor series expansion for the cosine function near zero, $\cos x = 1 - \frac{x^2}{2} + \mathcal{O}(x^4)$ and applying it to each term in the numerator and denominator yields
\begin{align*}
    Y_j(g) &= \frac{\left(1 - \frac{\theta_g^2}{2} + \mathcal{O}(\theta_g^4)\right) - \left(1 - \frac{(2j+2)^2\theta_g^2}{2} + \mathcal{O}(\theta_g^4)\right)}{1 - \left(1 - \frac{\theta_g^2}{2} + \mathcal{O}(\theta_g^4)\right)} \\
    &= \frac{\frac{\theta_g^2}{2}(2j+1)(2j+3) + \mathcal{O}(\theta_g^4)}{\frac{\theta_g^2}{2} + \mathcal{O}(\theta_g^4)},\\
    &= \frac{(2j+1)(2j+3) + \mathcal{O}(\theta_g^2)}{1 + \mathcal{O}(\theta_g^2)}.
\end{align*}

Taking the limit as $\theta_g \to 0^+$, the higher-order terms vanish, and we have
\begin{equation}
\label{eq. limiting value of Y_j}
    \lim_{g \to \infty} Y_j(g) = (2j+1)(2j+3)= 4j^2 + 8j + 3.
\end{equation}
This completes the proof.
\end{proof}
\begin{rem}
The asymptotic behaviour of the length parameters $X(g)$ and $Y_j(g)$ implies in particular that as $g \to \infty$, lengths of the geodesic loops remain bounded from below by a strictly positive constant. Thus, in the limiting case, the injectivity radius stays bounded from below and there is no local collapsing.
\end{rem}
\section*{Acknowledgement} For this work, the second author was partially supported by the University Grants Commission (UGC), Government of India. The authors would like to thank Dr. Pabitra Barman and Dr. K. Srinivasa Raghava for their insightful discussions and valuable suggestions during the development of this work.
\bibliographystyle{plain}
\bibliography{arxiv_version}
\end{document}